\documentclass[reqno,11pt]{amsart}
\usepackage{amsmath,amssymb,mathrsfs,amsthm,amsfonts}
\usepackage[inline]{enumitem} 
\usepackage[usenames,dvipsnames]{xcolor}
\usepackage{hyperref}
\usepackage{comment}
\usepackage{tikz}
\usetikzlibrary{matrix,graphs,arrows,positioning, shapes, calc,decorations.markings,decorations.pathmorphing,shapes.symbols,angles}

\hypersetup{%
	colorlinks=true, linkcolor=blue,
	citecolor=ForestGreen
}
\usepackage[paper=letterpaper,margin=1in]{geometry}

\newtheorem{theorem}{Theorem}[section]
\newtheorem{lemma}[theorem]{Lemma}
\newtheorem{proposition}[theorem]{Proposition}
\newtheorem{corollary}[theorem]{Corollary}
\theoremstyle{definition}
\newtheorem{definition}[theorem]{Definition}
\newtheorem{conjecture}[theorem]{Conjecture}
\newtheorem{remark}[theorem]{Remark}
\numberwithin{equation}{section}
\allowdisplaybreaks
\usepackage{acronym}

\newcommand{\BR}{\operatorname{BR}}
\newcommand{\e}{\varepsilon}

\renewcommand{\L}{\mathcal{L}}

\newcommand{\EE}{\mathsf{E}}

\renewcommand{\AA}{\mathsf{A}}
\newcommand{\tw}{\operatorname{TW}_{\operatorname{GUE}}}

\newcommand{\DD}{\mathsf{D}}

\newcommand{\h}{\mathfrak{h}}

\newcommand{\sh}{\mathfrak{h}}
\newcommand{\Y}{\mathsf{Y}}
\renewcommand{\P}{\mathbb{P}}

\newcommand{\Z}{\mathbb{Z}}

\newcommand{\af}{\mathrm{A}}

\newcommand{\R}{\mathbb{R}}
\renewcommand{\P}{\mathbb{P}}

\newcommand{\Con}{\mathrm{C}}
\newcommand{\csq}{\Con_{\operatorname{sq}}}
\numberwithin{equation}{section}

\renewcommand{\L}{\mathcal{L}}

\acrodef{KPZ}{Kardar--Parisi--Zhang}
\acrodef{SHE}{Stochastic Heat Equation}
\acrodef{LDP}{Large Deviation Principle}

\renewcommand{\Pr}{\mathbb{P}}	
\renewcommand{\d}{\mathrm{d}}
\newcommand{\dt}{\kappa}	
\newcommand{\ind}{\mathbf{1}}	
 
\newcommand{\bet}{\sigma} 

\newcommand{\nt}[1]{\nabla_{#1}}
\newcommand{\ft}[1]{\Lambda_{#1}}
\newcommand{\B}{\mathfrak{B}}
\newcommand{\bm}{\mathfrak{B}}

\newcommand{\norm}[1]{\Vert#1\Vert}

\newcommand{\stp}{\mathfrak{p}}
\newcommand{\ltp}{\mathfrak{q}}
\newcommand{\st}{ST-class($\stp$)}
\newcommand{\lt}{LT-class($\ltp$)}

\newcommand{\g}{\mathfrak{g}}
\newcommand{\m}{\mathsf}

\newcommand{\calA}{\mathcal{A}}

\newcommand{\calD}{\mathcal{D}}

\newcommand{\cl}{\mathcal{L}}
\newcommand{\ck}{\mathcal{K}}

\newcommand{\til}{\widetilde}

\renewcommand{\P}{\mathbb{P}}

\newcommand{\se}[1]{\Con\exp\big(-\tfrac1{\Con}s^{{#1}}\big)}
\newcommand{\te}[1]{\Con\exp\big(-\tfrac1{\Con}t^{{#1}}\big)}

\newcommand{\red}[1]{\textcolor{black}{#1}}
\usepackage{graphicx}

\title[Long and short time LIL for KPZ fixed point]{Long and short time laws of Iterated Logarithms for the KPZ fixed point}

\author[S.\ Das]{Sayan Das}
\address{S.\ Das,
	Department of Mathematics, Columbia University,
	\newline\hphantom{\quad \ \ S. Das}
	2990 Broadway, New York, NY 10027, USA
}
\email{sayan.das@columbia.edu}
\author[P.\ Ghosal]{Promit Ghosal}
\address{P.\ Ghosal,
	Department of Mathematics, Massachusetts Institute of Technology,
	\newline\hphantom{\quad \ \ P. Ghosal}
	182 Memorial Drive, Cambridge, MA 02139, USA
}
\email{promit@mit.edu}
\author[Y.\ Lin]{Yier Lin}
\address{Y.\ Lin,
	Department of Statistics, University of Chicago,
	\newline\hphantom{\quad \ \ Y. Lin}
	5747 S Ellis Ave, Chicago, IL 60637, USA
}
\email{ylin10@uchicago.edu}

\begin{document}
	\begin{abstract} We consider the KPZ fixed point  starting from a general class of initial data. In this article, we study the growth of the large peaks of the KPZ fixed point at a spatial point $0$ when time $t$ goes to $\infty$ and when $t$ approaches $1$. We prove that for a very broad class of initial data,
		as $t\to \infty$, the limsup of the KPZ fixed point height function when scaled by $t^{1/3}(\log\log t)^{2/3}$ almost surely equals a constant. The value of the constant is $(3/4)^{2/3}$ or $(3/2)^{2/3}$ depending on the initial data being non-random or Brownian respectively.
		Furthermore, we show that the increments of the KPZ fixed point near $t=1$ admits a short time law of iterated logarithm. More precisely, as the time increments $\Delta t :=t-1$ goes down to $0$, for a large class of initial data including the Brownian data initial data, we show that limsup of the height increments 
		the KPZ fixed point near time $1$ when scaled by $(\Delta t)^{1/3}(\log\log (\Delta t)^{-1})^{2/3}$ almost surely equals  $(3/2)^{2/3}$.

	\end{abstract}
	
	\subjclass[2020]{60K35, 82C22}
	\keywords{KPZ fixed point, directed landscape, random growth, fractal properties}
	
	\maketitle
	
	\section{Introduction}
	
	\subsection{Background}	The Kardar-Parisi-Zhang (KPZ) universality class consists of a broad family of random growth models that are believed to exhibit certain common features, such as universal scaling exponents and limiting distributions. Several models such as asymmetric simple exclusion processes, last passage percolation, directed polymers in random environment, driven lattice gas models, KPZ equation are believed to lie in this class. All these models are conjectured to have an universal limiting structure, namely the \textit{KPZ fixed point} under the so called `KPZ' scaling.     
	
	\smallskip
	
	Over the last four decades, an immense progress has been achieved in understanding a large collection of models in the KPZ universality class (see \cite{quastel2011introduction,corwin2012kardar,quastel2015one,HHNT,Tak18}) by intermingling the ideas from stochastic PDEs, random matrix theory, representation theory of the quantum groups and various other research directions. The universal scaling limit, the KPZ fixed point, itself was rigorously first constructed in \cite{MQR16} as a scaling limit of TASEP by specifying the transition probabilities. An alternative description of the KPZ fixed point as a variational formula involving \textit{space-time Airy sheet} was conjectured in \cite{co}. This space-time Airy sheet, later known as the \textit{directed landscape} (modulo a parabolic shift), was derived rigorously as a scaling limit of Brownian last-passage percolation in \cite{DOV18}. The connection between the two objects was obtained in \cite{NQR20}, where the authors proved the conjectural variational formulation of the KPZ fixed point, providing a coupling of all initial data on the same probability space based on the directed landscape.  
	
	\smallskip
	
	In this paper, we introduce the KPZ fixed point using the variational formulation which involves the directed landscape. We introduce the latter below.

	\begin{definition}[Directed Landscape] \label{def:dl} Let $\R^4_{\uparrow}:=\{(x,s;y,t)\in \R^4: s<t\}$. The directed landscape is a random continuous function $\cl :\R^4_{\uparrow}\to \R$ satisfying the metric composition law
		\begin{align}\label{metlaw}
			\cl(x,r;y,t)=\sup_{z\in \R} \left\{\cl(x,r;z,s)+\cl(z,s;y,t) \right\}, \mbox{ for all } (x,r;y,t)\in \R^4_{\uparrow}, \ s\in (r,t),
		\end{align}
		with the property that $\cl(\cdot,t_i;\cdot,t_i+s_i^3)$ are independent for any set of disjoint intervals $(t_i,t_i+s_i^3)$ (see Definition 1.2 in \cite{DOV18} for definition of Airy sheet). and as a function in $x,y$, $\cl(x,t;y,t+s^3)\stackrel{d}{=}s\cdot\mathcal{S}(x/s^2,y/s^2)$, where $\mathcal{S}(\cdot,\cdot)$ is a parabolic Airy sheet (see Definition 1.2 in \cite{DOV18} for definition of parabolic Airy sheet). The marginal of parabolic Airy sheet satisfies $\mathcal{S}(0,x)\stackrel{d}{=}\mathcal{A}(x)-x^2$ where $\calA(x)$ is the stationary $\operatorname{Airy}_2$ process constructed in \cite{ps02}.
	\end{definition}
	\begin{definition}[KPZ fixed Point]
		Given a directed landscape $\cl$ and an independent initial data $\h_0$, the KPZ fixed point $\h_t(x)$ is given by
		\begin{align}\label{def:kpzfp}
			\h_t(x):=\sup_{z\in \R}\left\{\h_0(z)+\cl(z,0;x,t)\right\}.
		\end{align}
	\end{definition}
	Initial data for the KPZ fixed point plays an important role in the precise nature of the fluctuations of the height function. Owing to exact solvability in some prelimiting KPZ models, the KPZ fixed point marginals for the three fundamental initial data: \textit{narrow wedge, flat, and Brownian}, has been well studied in the literature.
	
	\noindent $\bullet$ When $\h_0=-\infty\ind_{x\neq0}$, \textit{the narrow wedge initial data}, the distribution of the KPZ fixed point height function at any finite time is given by $\calA(\cdot)$ minus a parabola where $\calA(\cdot)$ is the stationary $\operatorname{Airy}_2$ process \cite{ps02} whose one point distribution are given by Tracy-Widom GUE distribution \cite{tw}.
	
	\noindent $\bullet$ For flat initial data, $\h_0\equiv 0$, $\h_1(\cdot)$ is given by the $\operatorname{Airy}_1$ process discovered by Sasamoto \cite{sasamoto2005spatial}. The one-point distribution of $\operatorname{Airy}_1$ process are related to Tracy-Widom GOE distribution \cite{tw2}.
	
	\noindent $\bullet$ When $\h_0(x)=\bm(x)$, a two sided Brownian motion with diffusion coefficient $2$, the	KPZ fixed point becomes stationary in the sense that $\h_t(x)-\h_t(0) \stackrel{d}{=}\bm(x)$ for each $t$. The distribution of $\h_1(\cdot)$ in this case is given by $\operatorname{Airy}_{\operatorname{stat}}$ process obtained by \cite{bbp}. It has Baik-Rains distribution as its one-point marginals \cite{br,chhita2018limit}. 
	
	In this paper we consider the KPZ fixed point started from a suitable class of initial data which includes the above three fundamental initial data.

	\subsection{Main results} 	
	We investigate the evolution of the heights of the large peaks of the KPZ fixed point in global and local scale through the lens of law of iterated logarithms (LIL). More precisely, we study LIL for the height function of the KPZ fixed point $\h_t(0)$ at spatial location $x=0$ as $t\uparrow \infty$ (long-time LIL), as well as the temporal increment of the KPZ fixed point near $1$, i.e., $\h_t(0)-\h_1(0)$ as $t\downarrow 1$ (short-time LIL). 
	We first state our long-time LIL result below.

	\begin{theorem}[Long-time LIL for non-random data] \label{thm:longLILa}    Let $\h_0$ be either narrow wedge, or a non-random Borel-measurable function with at most $\sqrt{x}$ growth, i.e., there exists a constant $\csq>0$ such that
		\begin{align}\label{eq:sq-growth}
			\h_0(x) \le \csq\sqrt{1+|x|}, \ \forall \ x\in \R.
		\end{align}
		Consider the KPZ fixed point $\h$ started from $\h_0$. We have
		\begin{align}\label{eq:main3}
			\limsup_{t\to \infty} \frac{\h_t(0)}{t^{\frac13}(\log\log t)^{2/3}} \stackrel{a.s.}{=} \left(\frac34\right)^{\frac23}.
		\end{align}	
	\end{theorem}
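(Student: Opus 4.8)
The plan is to prove the two matching almost-sure inequalities $\limsup\le(3/4)^{2/3}$ and $\limsup\ge(3/4)^{2/3}$ separately, each by a Borel--Cantelli argument along a geometric sequence of times $t_n=\rho^n$. The value of the constant is forced by the sharp one-point upper tail: to leading order $\P(\h_t(0)\ge s\,t^{1/3})\approx\exp(-\tfrac43 s^{3/2})$, so the choice $s=c(\log\log t_n)^{2/3}$ makes this $\asymp n^{-\frac43 c^{3/2}}$, which is summable exactly when $c>(3/4)^{2/3}$. Throughout I use the metric composition law \eqref{metlaw}, which upgrades \eqref{def:kpzfp} to the semigroup identity $\h_t(0)=\sup_{z\in\R}\{\h_s(z)+\cl(z,s;0,t)\}$ for $0<s<t$, and the scaling relation of Definition~\ref{def:dl}.

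\emph{Upper bound.} The analytic heart is a sharp, \emph{uniform-in-time} upper-tail bound: for every $\delta>0$ there is $s_0(\delta)$ with $\P(\h_t(0)\ge s\,t^{1/3})\le\exp(-\tfrac43(1-\delta)s^{3/2})$ for all $s\ge s_0$ and all $t\ge1$. For the narrow wedge this is the Tracy--Widom GUE right-tail asymptotics since $\h_t(0)=\cl(0,0;0,t)\stackrel{d}{=}t^{1/3}\mathcal{A}(0)$. For non-random $\h_0$ satisfying \eqref{eq:sq-growth}, \eqref{def:kpzfp} and scaling give the stochastic domination $\h_t(0)\le t^{1/3}M$ (uniformly in $t\ge1$), where $M:=\sup_{x\in\R}\{\csq\sqrt{1+|x|}+\mathcal{S}(x,0)\}$ and $\mathcal{S}$ is a parabolic Airy sheet; the quadratic decay of $\mathcal{S}(\cdot,0)$ beats the square-root growth of $\h_0$, localizing the supremum to $|x|\lesssim\sqrt{s}$, and sharp parabolic-Airy-sheet tail bounds then yield $\P(M\ge s)\le\exp(-\tfrac43(1-\delta)s^{3/2})$ for large $s$. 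Granting this, Borel--Cantelli along $t_n=\rho^n$ gives, for any $c'>(3/4)^{2/3}$, that $\h_{t_n}(0)<c't_n^{1/3}(\log\log t_n)^{2/3}$ for all large $n$ a.s. To pass to all large $t$: for $t\in[t_n,t_{n+1})$, applying \eqref{metlaw} at time $t$ with crossing point $0$ gives $\cl(z,t_n;0,t)\le\cl(z,t_n;0,t_{n+1})-\cl(0,t;0,t_{n+1})$, whence by the semigroup identity $\h_t(0)\le\h_{t_{n+1}}(0)-\cl(0,t;0,t_{n+1})$, so
\[
	\sup_{t\in[t_n,t_{n+1}]}\h_t(0)\ \le\ \h_{t_{n+1}}(0)+\bigl((\rho-1)t_n\bigr)^{1/3}Y_n,
\]
where, by the scaling and time-stationarity of $\cl$, the $Y_n$ are identically distributed, equal in law to a fixed finite nonnegative random variable $Y$ with a stretched-exponential tail (obtained from the landscape lower tail and its temporal modulus of continuity). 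Hence $\sum_n\P(((\rho-1)t_n)^{1/3}Y_n\ge\delta\,t_n^{1/3}(\log\log t_n)^{2/3})<\infty$ once $\rho$ is close to $1$, so this correction is eventually $\le\delta\,t_n^{1/3}(\log\log t_n)^{2/3}$; using monotonicity of $t\mapsto t^{1/3}(\log\log t)^{2/3}$, letting $n\to\infty$ and then $\delta\downarrow0$, $\rho\downarrow1$, $c'\downarrow(3/4)^{2/3}$ along countably many choices yields $\limsup\le(3/4)^{2/3}$.

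\emph{Lower bound.} Now the goal is an \emph{independent} family of increments realizing large values. Fix $c<(3/4)^{2/3}$, take $\rho$ large and $t_n=\rho^n$; then the intervals $(t_{n-1},t_n)$ are pairwise disjoint, so $\cl(\cdot,t_{n-1};\cdot,t_n)$ are independent across $n$, and $\h_{t_n}(0)\ge\h_{t_{n-1}}(0)+\cl(0,t_{n-1};0,t_n)$ by \eqref{def:kpzfp}--\eqref{metlaw}. Since $\cl(0,t_{n-1};0,t_n)\stackrel{d}{=}((1-\rho^{-1})t_n)^{1/3}\mathcal{A}(0)$ and the Tracy--Widom GUE right tail satisfies $\P(\mathcal{A}(0)\ge s)\ge c_1 s^{-3/2}e^{-\frac43 s^{3/2}}$, a direct computation shows $\sum_n\P(\cl(0,t_{n-1};0,t_n)\ge c\,t_n^{1/3}(\log\log t_n)^{2/3})=\infty$ whenever $c<(3/4)^{2/3}(1-\rho^{-1})^{1/3}$ (true for $\rho$ large), so by the second Borel--Cantelli lemma and independence this event holds for infinitely many $n$ a.s. Separately, $\h_{t_{n-1}}(0)\ge\h_0(z_0)+\cl(z_0,0;0,t_{n-1})$ for any fixed $z_0$ with $\h_0(z_0)>-\infty$, and the parabolic Airy sheet lower tail gives $\P(\h_{t_{n-1}}(0)<-\e\,t_n^{1/3}(\log\log t_n)^{2/3})\le Ce^{-c_2(\log\log t_n)^2}$, which is summable; hence $\h_{t_{n-1}}(0)\ge-\e\,t_n^{1/3}(\log\log t_n)^{2/3}$ eventually a.s. On the intersection of these two full-probability events, $\h_{t_n}(0)\ge(c-\e)t_n^{1/3}(\log\log t_n)^{2/3}$ for infinitely many $n$, so $\limsup\ge c-\e$; letting $\e\downarrow0$, $\rho\to\infty$, $c\uparrow(3/4)^{2/3}$ completes the lower bound.

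\emph{Main obstacle.} The Borel--Cantelli scaffolding and the limiting optimizations over $(c',\rho,\delta)$ and $(c,\rho,\e)$ are routine; the real difficulty is the sharp uniform-in-time upper-tail estimate $\P(\h_t(0)\ge s\,t^{1/3})\le e^{-\frac43(1-\delta)s^{3/2}}$ for general data obeying \eqref{eq:sq-growth}. Any loss in the exponent would propagate to a strictly larger constant in \eqref{eq:main3}, so one must show that the $\csq\sqrt{1+|x|}$ growth does not spoil the leading $\tfrac43 s^{3/2}$ rate---the Airy sheet's parabola wins---and control the resulting spatial supremum $M$ by sharp parabolic-Airy-sheet bounds uniformly in $t\ge1$. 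The lower-tail inputs (for $\h_{t_{n-1}}(0)$ and for $Y$) are needed only to stretched-exponential accuracy and are comparatively soft.
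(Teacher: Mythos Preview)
Your proposal is correct. The upper bound is essentially the paper's argument: a geometric sequence $t_n=\rho^n$, the sharp one-point tail (your stochastic domination $t^{-1/3}\h_t(0)\le M$ via the $\sqrt{x}$-growth and scaling is exactly the rescaling the paper performs to land in a fixed \st, and the tail of $M$ is Lemma~\ref{lem:onepttail}(b), i.e.\ \cite{ferrari2021upper}), plus an interpolation. Your interpolation via the metric-composition inequality $\h_t(0)\le\h_{t_{n+1}}(0)-\cl(0,t;0,t_{n+1})$ is a clean alternative to the paper's Proposition~\ref{pp:weakc}; both give a correction of size $(\rho-1)^{1/3}t_n^{1/3}$ times a random variable with stretched-exponential tail, which is absorbed by choosing $\rho$ close to $1$.

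The lower bound is where your route genuinely differs from, and is more elementary than, the paper's. The paper passes to doubly-exponential blocks $\mathcal I_n=[\exp(e^n),\exp(e^{n+1})]$, uses the landscape-replacement Theorem~\ref{ind} to approximate $\h_{e^{r_i}}(0)$ by the independent proxies $H^{e^{r_i}\downarrow e^{r_{i-1}}}$, and then exploits the product structure. You instead use the trivial inequality $\h_{t_n}(0)\ge\h_{t_{n-1}}(0)+\cl(0,t_{n-1};0,t_n)$ along $t_n=\rho^n$: the increments $\cl(0,t_{n-1};0,t_n)$ are \emph{exactly} independent with law $((1-\rho^{-1})t_n)^{1/3}\tw$, so the second Borel--Cantelli applies directly, and the remaining term $\h_{t_{n-1}}(0)$ is handled by the soft cubic lower tail. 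This bypasses Theorem~\ref{ind} entirely. The trade-off is that your argument is tailored to non-random data and yields only $(3/4)^{2/3}$; the paper's landscape-replacement framework is heavier but is set up to run uniformly through Theorem~\ref{thm:longLILb} (Brownian data), where the answer $(3/2)^{2/3}$ requires keeping the full variational formula rather than evaluating at $z=0$.
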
	
	
	The above result identifies the limiting height of the peaks of the KPZ fixed point. Note that constant in \eqref{eq:main3} is same for all initial data satisfying \eqref{eq:sq-growth}. This naturally leads to the following question:
	
	\smallskip
	
	\noindent $\bullet$ \textit{Where are the precise constant $(3/4)^{2/3}$ and $(\log\log t)^{2/3}$ behavior coming from?}

	As in other cases like Brownian motion or the Kardar-Parisi-Zhang equation \cite{dg21}, the value of the constant in the law of iterated logarithms of the KPZ fixed point is governed by the tail behavior of the one point distribution. As it turns out, for the initial data considered above, the (upper)-tail behavior of $t^{-1/3}\h_t(0)$ is similar to the (upper)-tail behavior of Tracy-Widom GUE distribution. If $X$ is a Tracy-Widom GUE distributed random variable, the upper tail, $\Pr(X\ge s)$, roughly behaves like $e^{-4s^{3/2}/3}$ for large $s$. The $3/2$ exponent in the tail induces a $(\log\log t)^{2/3}$ type behavior in the law of iterated logarithm whereas the constant $(3/4)^{2/3}$ comes from the leading coefficient in the exponent.
	
	\noindent $\bullet$ \textit{Why do we need condition \eqref{eq:sq-growth} on the initial data?}	
	
	As depicted in Theorem~\ref{thm:longLILa}, the initial data can atmost grow as $\sqrt{x}$. We now explain the reason behind such a choice of initial data. It is well known that the directed landscape $\cl(z,0;0,t)$ intuitively behaves like $-z^2/t$ (see Section \ref{sec:dl-decay}). Thus following \eqref{def:kpzfp} we heuristically expect $\h_t(0)$ to grow like as 
	\begin{align*}
		\sup_{z\in \R} \{\h_0(z)-\tfrac{z^2}{t}\}= t^{1/3}\sup_{x\in \R} \{t^{-1/3}\h_0(t^{2/3}x)-x^2\}.
	\end{align*}
	Note that $\h_t(0)$ is of the order $t^{1/3}$ as long as $\h_0(x) \le \csq\sqrt{1+|x|}$. For initial data with higher growth, one needs to subtract a non-trivial growing mean term from $\h_t(0)$ to see $t^{1/3}$ order fluctuations. Up to such centering, we believe that similar result as in \eqref{eq:main3} would hold. However, we defer this case to some future work. Theorem \ref{thm:longLILa} leaves out the Brownian initial data. We now state the corresponding result for Brownian data separately.
	
	\begin{theorem}[Long-time LIL for Brownian data] \label{thm:longLILb} Consider the KPZ fixed point $\h$ started from $\h_0(x)=\bm(x)$ where $\bm$ is a two-sided Brownian motion with diffusion coefficient $2$. Almost surely we have
		\begin{align}\label{eq:main4}
			\limsup_{t\to \infty} \frac{\h_t(0)}{t^{\frac13}(\log\log t)^{2/3}} = \left(\frac32\right)^{\frac23}.
		\end{align}	
	\end{theorem}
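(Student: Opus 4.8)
The plan is to run the Borel--Cantelli scheme underlying Theorem~\ref{thm:longLILa}, with the Tracy--Widom GUE upper tail replaced throughout by the Baik--Rains upper tail; the constant changes from $(3/4)^{2/3}=(4/3)^{-2/3}$ to $(3/2)^{2/3}=(2/3)^{-2/3}$ precisely because the relevant upper-tail exponent changes from $\tfrac43$ to $\tfrac23$. The distributional input is clean: by the Brownian scaling of $\bm$ together with the $1{:}2{:}3$ scaling of the directed landscape, $\h_t(0)\overset{d}{=}t^{1/3}\h_1(0)$ exactly for every $t>0$, where $\h_1(0)=\sup_{z\in\R}\{\bm(z)+\cl(z,0;0,1)\}$ has the Baik--Rains law, whose upper tail satisfies $\P(\h_1(0)\ge s)=\exp(-(\tfrac23+o(1))s^{3/2})$ as $s\to\infty$. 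Consequently $\P(\h_t(0)\ge s\,t^{1/3})$ does not depend on $t$, so along a geometric grid $t_n=\rho^n$ the event $\{\h_{t_n}(0)\ge \mu\,t_n^{1/3}(\log\log t_n)^{2/3}\}$ has probability $(n\log\rho)^{-(\tfrac23+o(1))\mu^{3/2}}$, which is summable in $n$ iff $\mu>(3/2)^{2/3}$.

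For the upper bound $\limsup\le(3/2)^{2/3}$, fix $\mu=(3/2)^{2/3}+\e$ and apply the first Borel--Cantelli lemma along $t_n=\rho^n$. To pass from the grid to all $t\in[t_n,t_{n+1}]$, write, using the metric composition law,
\begin{align*}
\h_t(0)=\h_{t_n}(0)+\sup_{z\in\R}\big\{\big(\h_{t_n}(z)-\h_{t_n}(0)\big)+\cl(z,t_n;0,t)\big\},
\end{align*}
where $\h_{t_n}(\cdot)-\h_{t_n}(0)\overset{d}{=}\bm$ is independent of the directed landscape after time $t_n$. Hence $\sup_{t\in[t_n,t_{n+1}]}\big(\h_t(0)-\h_{t_n}(0)\big)$ is, up to the H\"older-$(1/3)^{-}$ modulus of $\cl$ in the time variable, of order $((\rho-1)t_n)^{1/3}$ times a Baik--Rains-type random variable; choosing $\rho$ close to $1$ (depending on $\e$) makes this at most $\e\,t_n^{1/3}(\log\log t_n)^{2/3}$ off a further Borel--Cantelli-summable event. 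Since $t\mapsto t^{1/3}(\log\log t)^{2/3}$ is increasing, combining the two bounds gives $\limsup_{t}\h_t(0)/\big(t^{1/3}(\log\log t)^{2/3}\big)\le(3/2)^{2/3}+2\e$ almost surely, and $\e\downarrow 0$ finishes this direction. Carrying this out requires the time-H\"older regularity of the directed landscape and a stretched-exponential tail bound for the short-time oscillation of the KPZ fixed point.

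The lower bound $\limsup\ge(3/2)^{2/3}$ is the hard direction, and here the Brownian data is essential. The naive decomposition $\h_{t_k}(0)\ge\h_{t_{k-1}}(0)+\cl(0,t_{k-1};0,t_k)$ only exhibits a Tracy--Widom GUE increment and would yield the weaker constant $(3/4)^{2/3}$; to reach $(3/2)^{2/3}$ one must exploit the fluctuations of the initial data. Since $\bm$ and $\cl$ are independent, for a location $z_k^{\ast}$ of order $t_k^{2/3}(\log\log t_k)^{1/3}$ one has, with $m_k$ of order $(\log\log t_k)^{2/3}$,
\begin{align*}
\P\Big(\h_{t_k}(0)\ge \big(\tfrac32\big)^{2/3}(1-\e)\,t_k^{1/3}(\log\log t_k)^{2/3}\Big)\ \ge\ \P\big(\bm(z_k^{\ast})\ge m_k\,t_k^{1/3}\big)\cdot\P\big(\cl(z_k^{\ast},0;0,t_k)\ge 0\big),
\end{align*}
and combining the exact Gaussian tail of $\bm(z_k^{\ast})$, the parabolic estimate $\cl(z_k^{\ast},0;0,t_k)\approx-(z_k^{\ast})^2/t_k$ (Section~\ref{sec:dl-decay}), and the GUE moderate-deviation lower bound for the Airy marginal, the right-hand side equals $\exp(-(\tfrac23+o(1))m_k^{3/2})$ --- the optimal split being a Brownian deviation of cost $\exp(-\tfrac12 m_k^{3/2})$ and an Airy deviation of cost $\exp(-\tfrac16 m_k^{3/2})$. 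With $t_k=\rho^k$ this is $(\log t_k)^{-(1-o(1))}$, so $\sum_k$ of these probabilities diverges.

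It remains to turn this into a divergent Borel--Cantelli statement, and this is the main obstacle: the favorable events at different $t_k$ are correlated, both through the shared initial data $\bm$ and through the shared early portion of $\cl$. I would resolve this either (i) by a conditional (L\'evy) Borel--Cantelli with respect to the filtration generated by $(\bm,\cl|_{[0,t_k]})$, lower-bounding the conditional probabilities by exploiting the fresh Brownian increment of $\h_{t_{k-1}}(\cdot)-\h_{t_{k-1}}(0)$ over the relevant window together with the independent landscape increment $\cl(\cdot,t_{k-1};\cdot,t_k)$; or (ii) by taking the grid ratio $\rho$ large and the locations $z_k^{\ast}$ (hence $t_k$) to grow fast enough that the spacetime geodesic tube carrying $\cl(z_k^{\ast},0;0,t_k)$ and the Brownian increment of $\bm$ over $[z_{k-1}^{\ast},z_k^{\ast}]$ become asymptotically disjoint across $k$, after a standard localization estimate for directed-landscape geodesics. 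The delicate point in either route is to perform this decorrelation while retaining the sharp exponent $\tfrac23$, so that the resulting series still diverges; granting this, the divergent Borel--Cantelli lemma gives $\h_{t_k}(0)\ge(3/2)^{2/3}(1-\e)\,t_k^{1/3}(\log\log t_k)^{2/3}$ for infinitely many $k$ almost surely, and letting $\e\downarrow0$ completes the proof.
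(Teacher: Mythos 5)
Your upper bound is essentially the paper's: geometric grid $t_n=\til\rho^n$, the Baik--Rains one-point tail from Lemma~\ref{lem:onepttail}~\ref{2.6c}, and a stretched-exponential bound on the oscillation over $[t_n,t_{n+1}]$ (the paper rescales $[t_n,t_{n+1}]$ to $[1,\til\rho]$ and the data $\bm$ to another Brownian $\til{\h}_0^{(v)}$, then invokes Proposition~\ref{pp:weakc}). Your split of the one-point cost into a Brownian factor $e^{-\frac12 s^{3/2}}$ at $x\approx\sqrt{s}/2$ against an Airy factor $e^{-\frac16 s^{3/2}}$ is exactly Lemma~\ref{lem:onepttail}~\ref{2.6d} and correctly explains why the constant changes from $(3/4)^{2/3}$ to $(3/2)^{2/3}$.

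The lower bound is where you have a genuine gap, and it is the heart of the theorem: you correctly name the obstruction (correlation through the shared $\bm$ and the shared early portion of $\cl$), but defer its resolution to two sketches, neither carried out and neither matching the paper's construction. The paper decouples the landscape via the \emph{landscape replacement} of Section~\ref{sec5.1}: replace $\cl(z,0;0,t_k)$ with the proxy $\cl(z,t_{k-1};0,t_k)$, which lives in the slab $(t_{k-1},t_k)$ and so is exactly independent across $k$, and control the error by Theorem~\ref{ind}, which costs $\Con\exp(-\tfrac1\Con(t_k/t_{k-1})^{1/3})$. This forces a much sparser grid than $t_k=\rho^k$: the paper takes times $e^{r_i}$ with $r_i=e^n+ie^{\theta n}$ inside blocks $[\exp(e^n),\exp(e^{n+1})]$, making consecutive ratios $t_k/t_{k-1}=e^{e^{\theta n}}$ doubly exponential so the replacement error is summably small; with a constant $\rho$ (even a large one) the error $\Con\exp(-\tfrac1\Con(\rho-1)^{1/3})$ never goes to $0$, so your grid would not work with this device. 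The Brownian part is decoupled by a second step you do not mention: each $Y_i$ is restricted to a spatial window $[\zeta^{-1}(\nt{i})^{2/3},\zeta(\nt{i})^{2/3}]$ with $\zeta=\exp(\tfrac13 e^{n\theta})$ chosen so the windows are pairwise disjoint, the Brownian value at the left endpoint is subtracted off, and the resulting $Y_i''$ are honestly independent, at which point Lemma~\ref{lem:onepttail}~\ref{2.6d} applies. Your route (i), a conditional Borel--Cantelli, would need the conditional probabilities lower-bounded uniformly with the sharp exponent $2/3$, which you have not done; your route (ii), geodesic-tube disjointness, is closer in spirit but substitutes an unproven geodesic localization for what the explicit proxy already delivers. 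As written, the decorrelation --- the main technical content of the proof --- is missing.
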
	
	\medskip
	
	Note that although $(\log\log t)^{2/3}$ scaling above is same as Theorem \ref{thm:longLILa}, the limiting constant in \eqref{eq:main4} is different compared to \eqref{eq:main3}. This is due to the fact the distribution of $t^{-1/3}\h_t(0)$ for Brownian data is given by the Baik-Rains distribution which has $e^{-2s^{3/2}/3}$ upper tail decay \cite{ferrari2021upper} in comparison to the tail decay $e^{-4s^{3/2}/3}$ for the Tracy-Widom GUE distribution. This leads to the above $(3/2)^{2/3}$ constant.

	Theorems \ref{thm:longLILa} and \ref{thm:longLILb} provide precise long-time LIL for the KPZ fixed point for a large class of data. However, it leaves out the case of liminf type law of iterated logarithm which corresponds to the limiting depth of the valleys of the KPZ fixed point. Such liminf result is related to how the lower tail probability $\Pr(t^{-1/3}\h_t(0)\le -s)$ decays with $s$. For a large class of initial data, the lower tail probability decays like  $e^{-\gamma s^{3}}$ for large $s$. Here $\gamma:=\frac{1}{12}$ for narrow wedge and Brownian data and $\gamma:=\frac16$ for flat initial data (see Example 1.16 in \cite{qr19}). Thus we expect 
	\begin{align}\label{eq:liminf}
		\liminf_{t\to \infty} \frac{\h_t(0)}{t^{1/3}(\log\log t)^{1/3}}\stackrel{a.s.}{=}-\gamma^{-1/3}.
	\end{align}
	However, it is not known what would be the constant $\gamma$ in the lower tail for all initial data satisfying \ref{thm:longLILa}. As a result, it is less clear what should be the limiting constant for the liminf case. Once the value of the constant is determined, the proof of \eqref{eq:liminf} will be very similar to that of Theorem~\ref{thm:longLILa}.

	\smallskip

	We next turn towards our short-time LIL result that identifies local temporal growth behavior.
	
	\begin{theorem}[Short-time LIL] \label{thm:main} Let $\h_0$ be the narrow wedge initial data or two-sided Brownian motion with diffusion coefficient $2$ or a non-random Borel measurable function with at most parabolic growth of size less than $-x^2$, i.e., there exist constants $\af\in \R$ and $\dt>0$ such that
		\begin{align}\label{eq:condd}
			\h_0(x)\le \af+(1-\dt)x^2, \ \forall \ x\in \R.
		\end{align}
		Consider the KPZ fixed point $\h$ started from the initial data $\h_0$. Almost surely we have
		\begin{align}\label{eq:main2}
			\limsup_{\e\downarrow 0} \frac{\h_{1+\e}(0)-\h_1(0)}{\e^{\frac13}(\log\log\e^{-1})^{2/3}} = \left(\frac32\right)^{\frac23}.
		\end{align}	
	\end{theorem}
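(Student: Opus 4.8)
The proof has the usual two halves, $\limsup\le(\tfrac32)^{2/3}$ and $\limsup\ge(\tfrac32)^{2/3}$, each carried out along a geometric subsequence $\e_n=\theta^{-n}$ (with $\theta$ chosen differently for the two bounds) and then completed by a limiting procedure; generic $\e$ in a window $[\e_{n+1},\e_n]$ is handled by the time‑regularity of $\h$. The structural input behind both halves is that, applying the metric composition law \eqref{metlaw} at the intermediate time $1$,
\[
\h_{1+\e}(0)-\h_1(0)=\sup_{w\in\R}\bigl\{\bigl(\h_1(w)-\h_1(0)\bigr)+\cl(w,1;0,1+\e)\bigr\},
\]
and the slab $\cl(\cdot,1;\cdot,1+\e)$ lives on a time interval disjoint from $[0,1]$, hence is \emph{independent} of $\h_1(\cdot)$. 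Rescaling $w=\e^{2/3}y$, using $\cl(\e^{2/3}y,1;0,1+\e)\stackrel{d}{=}\e^{1/3}\mathcal{S}(y,0)$, and using that $w\mapsto\h_1(w)-\h_1(0)$ is on vanishing neighbourhoods of $0$ comparable with a two‑sided Brownian motion of diffusion coefficient $2$ — exactly, by spatial stationarity, for Brownian data; by the local Brownian property of $\operatorname{Airy}_2$ for narrow wedge; and, under \eqref{eq:condd} which makes $\h_1$ a well‑defined continuous function, by the local Brownian regularity of the KPZ fixed point profile (a spatial marginal of $\cl$, cf.\ \cite{DOV18}) for sub‑parabolic non‑random data — one finds that $\e^{-1/3}\bigl(\h_{1+\e}(0)-\h_1(0)\bigr)$ is, up to quantitative errors, distributed as $\sup_y\{\bm(y)+\mathcal{S}(y,0)\}$, i.e.\ as the KPZ fixed point from Brownian data evaluated at $(0,1)$, whose one‑point law is Baik--Rains with upper tail $\exp(-\tfrac23 s^{3/2}(1+o(1)))$ \cite{ferrari2021upper} and lower tail $\exp(-\tfrac1{12}s^{3}(1+o(1)))$ \cite{qr19}. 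The $\tfrac23$ in that upper tail is exactly what produces the constant $(\tfrac23)^{-2/3}=(\tfrac32)^{2/3}$ in \eqref{eq:main2}.

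\emph{Upper bound.} Fix $\delta>0$. Localising the supremum above to $|w|\le\e^{2/3}(\log\log\e^{-1})^{C}$ with superpolynomially small error (directed‑landscape decay, Section~\ref{sec:dl-decay}) and invoking the Baik--Rains upper tail gives a one‑point bound $\Pr(\h_{1+\e}(0)-\h_1(0)\ge s\,\e^{1/3})\le\exp(-\tfrac23(1-\delta)s^{3/2})$ uniformly for $\e$ in a window and $s\lesssim(\log\log\e^{-1})^{2/3}$. To control $\sup_{\e\in[\e_{n+1},\e_n]}(\h_{1+\e}(0)-\h_1(0))$ I would first split off $R_n:=\h_{1+\e_{n+1}}(0)-\h_1(0)$, which by its own Borel--Cantelli satisfies $|R_n|\le C(\log n)^{1/3}\e_{n+1}^{1/3}=o(\e_n^{1/3}(\log\log\e_n^{-1})^{2/3})$ eventually a.s.\ (so the sharp constant is not lost), and then estimate $\sup_{\e\in[\e_{n+1},\e_n]}(\h_{1+\e}(0)-\h_{1+\e_{n+1}}(0))$; since this window has relative size $\theta$, scaling makes the factor $\theta^{1/3}$ coming from the window size cancel against $\e_{n+1}^{1/3}$, so that a union bound over a $(\log n)^{O(1)}$‑net plus time‑continuity yields $\Pr(\sup_{\e\in[\e_{n+1},\e_n]}(\h_{1+\e}(0)-\h_1(0))\ge s\,\e_n^{1/3})\le\exp(-\tfrac23(1-\delta)s^{3/2})$. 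Taking $s=(\tfrac32)^{2/3}(1+\delta)(\log\log\e_n^{-1})^{2/3}$ makes these probabilities summable ($(1+\delta)^{3/2}>1$ beats $\sum_n(\log\e_n^{-1})^{-1}$), so Borel--Cantelli and then $\theta\downarrow1$, $\delta\downarrow0$ give $\limsup\le(\tfrac32)^{2/3}$ a.s.

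\emph{Lower bound.} Take $\theta$ large and decompose $\h_{1+\e_n}(0)-\h_1(0)=D_n+R_n$ with $D_n:=\h_{1+\e_n}(0)-\h_{1+\e_{n+1}}(0)$; as above $R_n\ge -C(\log n)^{1/3}\e_{n+1}^{1/3}$ eventually a.s., which is negligible. Applying \eqref{metlaw} at time $1+\e_{n+1}$, $D_n=\sup_w\{\phi_n(w)+\xi_n(w,0)\}$ with $\phi_n:=\h_{1+\e_{n+1}}(\cdot)-\h_{1+\e_{n+1}}(0)$ and $\xi_n:=\cl(\cdot,1+\e_{n+1};\cdot,1+\e_n)$; the slab $\xi_n$ is built from a time interval disjoint from $[0,1+\e_{n+1}]$, hence independent of $\phi_n$, so $D_n$ is distributed (exactly, for Brownian data; up to quantitative error otherwise) as $(\e_n-\e_{n+1})^{1/3}$ times a Baik--Rains variable. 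Setting $c_n:=(\tfrac32)^{2/3}(1-\delta)\e_n^{1/3}(\log\log\e_n^{-1})^{2/3}$, the Baik--Rains upper tail makes $\sum_n\Pr(D_n\ge c_n)$ diverge as soon as $1-\tfrac1\theta>(1-\delta)^{3}$, which holds for $\theta$ large.

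\emph{Main obstacle and conclusion.} The events $\{D_n\ge c_n\}$ are not independent, since the profiles $\phi_n$ are built from overlapping portions of $\cl$, and there is no clean reverse filtration making a conditional Borel--Cantelli lemma apply directly. I would resolve this by localising $D_n$ to $|w|\le r_n:=\e_n^{2/3}(\log\log\e_n^{-1})^{C}$ and writing $\h_{1+\e_{n+1}}$ as the metric composition of $\h_{1+\e_{n+k+1}}$ with the thin intermediate slabs $\xi_{n+1},\dots,\xi_{n+k}$ (each localising at spatial scale $\lesssim\e_{n+1}^{2/3}$): then $\Pr(D_n\ge c_n\mid\phi_n)$ depends on $\h_{1+\e_{n+k+1}}$ and on $\xi_{n+k}$ only through their restriction to $|w|\le Cr_n$, while $\{D_{n+k}\ge c_n\}$ depends on them only through $|w|\le r_{n+k}\ll r_n$, on which the relevant fluctuations are of order $\e_{n+k}^{1/3}\ll\e_n^{1/3}$ and therefore essentially invisible to $D_n$. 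After discarding a bad event of summable probability on which some $\phi_n$ fails to be Brownian‑comparable near $0$, this gives a geometric‑in‑$k$ bound on $\operatorname{Cov}\bigl(\ind_{\{D_n\ge c_n\}},\ind_{\{D_{n+k}\ge c_n\}}\bigr)$, which via a Kochen--Stone / second‑moment argument (as in the long‑time analysis, cf.\ \cite{dg21}) forces $\{D_n\ge c_n\}$ to occur infinitely often a.s. Together with the bound on $R_n$ this yields $\h_{1+\e_n}(0)-\h_1(0)\ge(\tfrac32)^{2/3}(1-2\delta)\e_n^{1/3}(\log\log\e_n^{-1})^{2/3}$ infinitely often a.s., and letting $\delta\downarrow0$ completes \eqref{eq:main2}. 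I expect the delicate points to be (i) the quantitative Brownian comparison for the profile $\h_1$ (and $\h_{1+\e_{n+1}}$), uniform over windows and over the three classes of initial data, needed to transfer the Baik--Rains tails, and (ii) the covariance‑decay estimate in the last step.
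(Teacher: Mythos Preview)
Your overall architecture is right, but both halves have issues worth flagging, and the lower bound takes a genuinely different route from the paper.

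\textbf{Upper bound.} Your decomposition is backwards. You write $\h_{1+\e}(0)-\h_1(0)=R_n+(\h_{1+\e}(0)-\h_{1+\e_{n+1}}(0))$ and claim $|R_n|\le C(\log n)^{1/3}\e_{n+1}^{1/3}$ eventually by Borel--Cantelli. This is false: the upper tail of $R_n/\e_{n+1}^{1/3}$ decays only like $\exp(-\tfrac23 s^{3/2})$, so $\Pr(R_n>C(\log n)^{1/3}\e_{n+1}^{1/3})\asymp\exp(-c\sqrt{\log n})$, which is not summable; in fact the very LIL you are proving says $R_n$ reaches order $(\log n)^{2/3}\e_{n+1}^{1/3}$ infinitely often. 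The paper splits at the \emph{larger} endpoint: with $s_n=1+\rho^n$ and $\rho<1$ close to $1$, it writes $\h_t-\h_1=(\h_{s_n}-\h_1)+(\h_t-\h_{s_n})$ for $t\in[s_{n+1},s_n]$. The first (single-time) term carries the sharp constant via Proposition~\ref{prop:stincrementtail}; the second is over a window of relative size $1-\rho$ and is controlled by the \emph{crude} modulus Proposition~\ref{pp:weakc}, made negligible by taking $\rho$ close to $1$. No net argument is needed.

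\textbf{Lower bound.} Here your plan and the paper's genuinely diverge. You propose to show approximate independence of $\{D_n\ge c_n\}$ via a covariance-decay estimate and then invoke Kochen--Stone. You correctly identify the two hard points (quantitative Brownian comparison of the profile; correlation decay across scales) but leave both as sketches. The paper avoids second moments entirely by manufacturing \emph{exact} independence via two replacement tricks.

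First, it works not along a single geometric sequence but in blocks: for each $n$ it places $\sim e^{(1-\theta)n}$ times $1+e^{-r_i}$ inside $\mathcal{I}_n$, and the short-time landscape replacement (Theorem~\ref{thm:lr}) swaps $\h_{1+e^{-r_i}}(0)$ for a proxy $H_1^{e^{-r_i}\downarrow e^{-r_{i+1}}}$ built from $\h_1(\cdot)$ and $\cl(\cdot,1+e^{-r_{i+1}};0,1+e^{-r_i})$. These time slabs are disjoint, so the resulting Airy$_2$ processes $\calA_i$ are independent. Second, the residual dependence through the common profile $\h_1(\cdot)$ is handled not by a soft ``locally Brownian'' comparison but by Pimentel's pathwise inequality (Lemma~\ref{lem:argmaxprop}\ref{lem:compare}): on the high-probability event $\EE_1^{\mu}(a)$ one has $\h_1(x)-\h_1(0)\ge\h_1^{-\mu}(x)-\h_1^{-\mu}(0)$ for $x\ge0$, and by stationarity the right side is \emph{exactly} $\B(x)-\mu x$. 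Restricting the variational problems to disjoint spatial windows $[\zeta^{-1}\Lambda_i^{2/3},\zeta\Lambda_i^{2/3}]$ then yields genuinely independent random variables $Y_i''$, and one concludes via the product bound $\prod_{i}\Pr(Y_i''\le\gamma n^{2/3}+2)$.

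Your Kochen--Stone route may be completable, but the quantitative ``Brownian comparability'' you flag as delicate is precisely what the Pimentel trick supplies, and once you have it the paper's constructive-independence argument is both simpler and gives the sharp constant without a covariance computation.
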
 
	
	\smallskip
	We see that local growth of the KPZ fixed point in the temporal direction is of the order $\e^{1/3}(\log\log \e^{-1})^{2/3}$. Furthermore, the precise constant $(3/2)^{2/3}$ is universal in the sense that it does not depend on the initial data. We now briefly explain why this is the case. Indeed note that due to the metric composition law in \eqref{metlaw} and the variational formula in \eqref{def:kpzfp} we have
	\begin{align} \label{eq:arek1}
		\nabla \h_{\e}:= \frac{\h_{1+\e}(0)-\h_1(0)}{\e^{1/3}} = \sup_{x\in \R} \left[\frac{\h_1(\e^{2/3}x)-\h_1(0)}{\e^{1/3}}+\frac{\cl(\e^{2/3}x,1;0,1+\e)}{\e^{1/3}}\right].
	\end{align}
	By the scale invariance properties of the directed landscape, $\varepsilon^{-\frac{1}{3}}\mathcal{L}(\varepsilon^{\frac{2}{3}}x, 1; 0, 1+\varepsilon)$ is equal in distribution to $\calA(x)-x^2$ where $\calA(\cdot)$ is the stationary $\operatorname{Airy}_2$ process. On the other hand, by local Brownianity of the KPZ fixed point (see Theorem 4.14 in \cite{MQR16}), for any initial data of Theorem~\ref{thm:main}, we have
	\begin{align*}
		\frac{\h_1(\e^{2/3}x)-\h_1(0)}{\e^{1/3}} \stackrel{d}{\to} \B(x), \qquad \text{as } \e \downarrow 0
	\end{align*}
	where $\B(x)$ is a two sided Brownian motion with diffusion coefficient $2$. Thus, as $\e \downarrow 0$, we expect $\nabla \h_{\e}$ converge to the Baik-Rains distribution \cite{br} which is defined as $\sup_{x\in \R} (\B(x)+\calA(x)-x^2)$. The constant $(3/2)^{2/3}$ appearing in \eqref{eq:main2} can then be anticipated from the sharp upper tail asymptotics of the Baik-Rains distribution \cite{ferrari2021upper}.  
	
	\smallskip
	
	Note that Theorem \ref{thm:main} includes a larger class of functional data compared to Theorem \ref{thm:longLILa}. This is partly because  we are interested in the local growth behavior of the KPZ fixed point around $t=1$. As we explain below, studying local growth of the KPZ fixed point does not require a non-trivial centering of $\h_1(0)$ for a larger class of initial data as opposed to the case in Theorem~\ref{thm:longLILa}. To see this, note that the directed landscape $\cl(z,0;0,1)$ decays like $-z^2$. Thus following \eqref{def:kpzfp} we  expect $\h_1(0)$ to be centered around
	\begin{align*}
		\sup_{z\in \R} \{\h_0(z)-z^2\}.
	\end{align*}
	The last expression is finite whenever $\h_0(x) \le \af+(1-\dt)x^2$. This justifies the condition in \eqref{eq:condd}. 
	
	Like as the limsup result of Theorem~\ref{thm:main}, the height increments of the KPZ fixed point near $t=1$ also admit a liminf LIL as the time increments converges to $0$. More precisely, for the KPZ fixed point started from any initial data considered in Theorem~\ref{thm:main}, we expect 
	$$\liminf_{\e\downarrow 0} \frac{\h_{1+\e}(0)-\h_1(0)}{\e^{\frac13}(\log\log\e^{-1})^{1/3}} = -12^{\frac13}.$$
	The proof of the above fact can be completed using similar ideas as in the proof of Theorem~\ref{thm:main}. We defer showing this result to a future work.

	 Theorem~\ref{thm:main} studies the local growth of the KPZ fixed point along temporal direction near the time $1$. However the same result holds for any other fixed time $t$ (assuming $\h_0(x)\le \af+(1-\dt){x^2}/{t}$ instead of \eqref{eq:condd} for functional data) and the proof follows in the same way. It is worthwhile to ask if the same result hold for all $t\in [c,d]$ uniformly. In other words, how often does the short time LIL for the KPZ fixed point fails to hold on an interval. Similar questions were studied for the Brownian motion in \cite[Theorem~2]{OreyTaylor74} where the authors had showed the Hausdorff dimension of the set of time points where the LIL fails is almost surely equal to $1$. In fact, their result is much deeper. Taking inspiration from their result and drawing on the analogy between the short time LILs of the KPZ fixed point and the Brownian motion, we make the following conjecture. 
	
	\begin{conjecture}
	Continuing with the notation and assumption of Theorem~\ref{thm:main}, define the set 
	\begin{align*}
	    \mathbf{E}(\alpha) = \Big\{s\in [1,2] \mid \limsup_{t\downarrow s} \frac{\mathfrak{h}_t(0) - \mathfrak{h}_s(0)}{(t-s)^{1/3}(\log (t-s)^{-1})^{2/3}}\geq \Big(\tfrac{3}{2}\Big)^{\frac{2}{3}}\alpha\Big\}
	\end{align*}
	where $\alpha\in [0,1]$. Then the Hausdorff dimension of the set $\mathbf{E}(\alpha)$ is equal to $1-\alpha^{\frac{3}{2}}$.
	\end{conjecture}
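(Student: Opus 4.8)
\medskip

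\noindent\textit{Outline of a possible approach.} The conjectured exponent $1-\alpha^{3/2}$ is the exact analogue, for the short-time increments of the KPZ fixed point, of the Orey--Taylor exponent $1-\alpha^{2}$ for the fast points of Brownian motion \cite{OreyTaylor74}: the Gaussian tail $e^{-x^2/2}$ is replaced by the $\tfrac32$-exponent Baik--Rains upper tail $e^{-\frac23 x^{3/2}}$ \cite{br,ferrari2021upper}, and the Hausdorff budget is recomputed accordingly. Write $\Delta=t-s$ and, as in \eqref{eq:arek1}, $\nabla\h_\Delta(s):=\Delta^{-1/3}\big(\h_{s+\Delta}(0)-\h_s(0)\big)$. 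The first, and in my view hardest, ingredient is a sharp tail estimate uniform over $s\in[1,2]$ and small $\Delta$,
\[
\P\big(\nabla\h_\Delta(s)\ge u\big)=\exp\!\big(-(\tfrac23+o(1))\,u^{3/2}\big),\qquad u\to\infty ,
\]
valid in the window $u\asymp(\log\Delta^{-1})^{2/3}$, i.e.\ down to probabilities polynomially small in $\Delta$ (the proof of Theorem~\ref{thm:main} only requires this for the narrower window $u\asymp(\log\log\Delta^{-1})^{2/3}$, down to probabilities logarithmically small). Substituting the threshold $u=(\tfrac32)^{2/3}\alpha(\log\Delta^{-1})^{2/3}$ turns $\tfrac23u^{3/2}$ into $\alpha^{3/2}\log\Delta^{-1}$, so the chance that $s$ is ``$\alpha$-fast at scale $\Delta$'' is $\Delta^{\alpha^{3/2}+o(1)}$; partitioning $[1,2]$ into $\Delta^{-1}$ blocks of length $\Delta$ makes the expected number of fast blocks $\Delta^{-1+\alpha^{3/2}+o(1)}$, which is precisely the count that pins the dimension at $1-\alpha^{3/2}$.

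For the upper bound $\dim_H\mathbf{E}(\alpha)\le 1-\alpha^{3/2}$ I would run a dyadic covering argument at scales $\Delta=2^{-n}$. If $s\in\mathbf{E}(\alpha)$, then for each $\eta>0$ there are infinitely many $n$ and some $t$ with $t-s\in[2^{-n-1},2^{-n}]$ along which the ratio in the definition of $\mathbf{E}(\alpha)$ exceeds $(\tfrac32)^{2/3}(\alpha-\eta)$; since $s$ and $t$ then lie in a pair of consecutive level-$n$ dyadic blocks, it suffices to control, for each level-$n$ dyadic block $I$, the event that $\sup_{\sigma\in I}\sup_{2^{-n-1}\le\delta\le 2^{-n}}\big(\h_{\sigma+\delta}(0)-\h_\sigma(0)\big)$ exceeds the associated threshold. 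The extra suprema over a continuum of base points and scales are absorbed by a modulus-of-continuity/chaining bound for $(\sigma,\delta)\mapsto \h_{\sigma+\delta}(0)-\h_\sigma(0)$, which is exactly what reduces the continuum $\limsup_{t\downarrow s}$ in $\mathbf{E}(\alpha)$ to a countable family of events, each of probability at most $2^{-n(\alpha-\eta)^{3/2}+o(n)}$. Markov's inequality then bounds the number of fast level-$n$ blocks by $2^{n(1-(\alpha-\eta)^{3/2})+\eta n}$ off an event of probability summable in $n$, Borel--Cantelli makes this hold for all large $n$ almost surely, and summing the $\gamma$-dimensional Hausdorff content of the covering over $n$ gives $\mathcal H^{\gamma}(\mathbf{E}(\alpha))=0$ for every $\gamma>1-(\alpha-\eta)^{3/2}+\eta$; letting $\eta\downarrow 0$ finishes this direction.

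For the lower bound $\dim_H\mathbf{E}(\alpha)\ge 1-\alpha^{3/2}$ I would construct a random Cantor-type subset $F\subseteq\mathbf{E}(\alpha)$ by a multiscale selection: from $[1,2]$, at each level keep a dyadic subinterval $I$ precisely when a fast increment occurs at the matching scale somewhere inside $I$. The structural input that makes the selection events at well-separated intervals and scales essentially independent is that $\cl(\cdot,\sigma;\cdot,\sigma+\delta)$ are independent over disjoint time windows (Definition~\ref{def:dl}), together with short-time localization of the location of the supremum in \eqref{def:kpzfp} to an $O(\delta^{2/3})$-neighbourhood of $0$; this localization also converts the spatial increments of $\h_\sigma$ entering $\nabla\h_\delta(\sigma)$ into a locally Brownian input, exactly as in the heuristic after \eqref{eq:arek1}. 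A second-moment computation using the matching lower tail $\P(\nabla\h_\Delta(s)\ge u)\ge\exp(-(\tfrac23+o(1))u^{3/2})$ then shows $F\neq\varnothing$ with positive probability and supports a Frostman measure $\mu$ with $\mu(I)\le|I|^{1-\alpha^{3/2}-\eta}$, so $\dim_H F\ge 1-\alpha^{3/2}-\eta$; a zero--one argument, using independence of $\cl$ over disjoint time windows to see that the dimension of $\mathbf{E}(\alpha)$ restricted to any subinterval of $[1,2]$ is almost surely a deterministic constant, upgrades ``with positive probability'' to ``almost surely.'' The crux throughout is the sharp, uniform-in-$(\Delta,s)$ moderate-deviation tail for $\nabla\h_\Delta(s)$ with the exact constant $\tfrac23$, carried out simultaneously with the spatial-localization and chaining estimates needed both to discretize the continuous $\limsup$ (upper bound) and to decorrelate the selection events (lower bound); granted these inputs, the covering and second-moment steps follow the classical Orey--Taylor template.
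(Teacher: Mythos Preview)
The statement you are attempting to prove is a \emph{conjecture} in the paper, not a theorem: the paper offers no proof whatsoever, only the remark that the Hausdorff-dimension question is inspired by the Orey--Taylor result for Brownian motion. So there is no ``paper's own proof'' to compare your outline against.

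That said, your outline is a reasonable heuristic template and correctly identifies the analogy with Orey--Taylor and the replacement $\alpha^2\to\alpha^{3/2}$ driven by the Baik--Rains tail exponent. You also correctly flag the main obstruction: the paper's increment tail (Proposition~\ref{prop:stincrementtail}) is only established for $r$ up to order $(\log\log\Delta^{-1})^{2/3}$, whereas your covering and second-moment arguments require the sharp constant $\tfrac23$ to persist at the much larger scale $r\asymp(\log\Delta^{-1})^{2/3}$, i.e.\ down to probabilities polynomial in $\Delta$. Nothing in the paper delivers this, and the Brownian replacement trick in Section~\ref{sec4.3} as written would not reach that regime without substantial new input (the error events $\EE_s^\mu(a)^c$ and the argmax-localization bounds are only shown to be $\exp(-\Con r^{3/2})$-small, which is not polynomially small in $\Delta$ at the relevant threshold). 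Likewise, the lower bound requires a matching sharp \emph{lower} tail for $\nabla\h_\Delta(s)$ uniformly in $s\in[1,2]$ and small $\Delta$, and sufficient decorrelation of increments based at \emph{different} base points $s$; the landscape-replacement machinery of Section~\ref{sec5} handles only a single base point and would need to be extended. Your sketch is a plausible roadmap, but it remains a conjecture for good reason.
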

	
	.     
	
	\subsection{Proof Ideas}\label{sec1.1}  We now briefly describe the proof idea of our main results. We first explain the details for the short-time LIL case. The spirit of the arguments for the long time LIL is similar and will be touched upon later in this section.

	\smallskip

	Let us recall the notation $\nabla \h_\e$ from \eqref{eq:arek1}. As alluded in the discussion after \eqref{eq:arek1}, as $\e\downarrow 0$, we expect $\nabla \h_\e$ converges to the Baik-Rains distribution. Although we have precise tail estimates of the Baik-Rains distribution, to conclude the law of iterated logarithm type result, we need to produce sharp uniform tail estimates for $\nabla \h_{\e}$. One of the main contributions of this paper is to establish such estimates.

	\begin{proposition}[Simpler version of Proposition \ref{prop:stincrementtail}]\label{sims} Given any $\delta>0$, there exist $s_0(\delta),\e_0(\delta)>0$, such that for all $s>s_0$ large enough and for all $\e\in (0,\e_0)$ small enough we have 
		\begin{align}\label{eq:arek}
			\Pr(\nabla\h_{\e}\ge s) \le \exp\left(-(\tfrac23-\delta)s^{3/2}\right).
		\end{align}
	\end{proposition}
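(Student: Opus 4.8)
My plan starts from the decomposition \eqref{eq:arek1}: with $A_\e(x):=\e^{-1/3}(\h_1(\e^{2/3}x)-\h_1(0))$ and $B_\e(x):=\e^{-1/3}\cl(\e^{2/3}x,1;0,1+\e)$ we have $\nabla\h_\e=\sup_{x\in\R}(A_\e(x)+B_\e(x))$, and I would exploit two structural facts. First, $A_\e\perp B_\e$: the process $A_\e$ is a functional of $\h_0$ and of $\cl$ restricted to the time window $(0,1)$, $B_\e$ is a functional of $\cl$ restricted to $(1,1+\e)$, and $\cl$ is independent over disjoint time windows. Second, by scaling invariance of $\cl$ one has the exact process identity $B_\e\stackrel{d}{=}\calA(\cdot)-(\cdot)^2$ with $\calA$ the stationary $\operatorname{Airy}_2$ process, so that $\Pr(B_\e(x)\ge w-x^2)=\Pr(\calA(x)\ge w)\le\exp(-(\tfrac43-\delta')w^{3/2})$ for $w$ large (Tracy--Widom GUE upper tail).

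I would then bound $\Pr(\nabla\h_\e\ge s)$ in three steps. \emph{(a) Spatial truncation.} Fix a large constant $K$ and set $R:=Ks^{1/2}$. The a.s.\ parabolic decay of $\calA$ gives $\Pr(\sup_{|x|>R}B_\e(x)\ge-\tfrac12 x^2)\le Ce^{-cK^3 s^{3/2}}$, and on the complementary event $A_\e(x)+B_\e(x)\le A_\e(x)-\tfrac12 x^2$ for $|x|>R$; a dyadic decomposition in $|x|$, fed by upper-tail estimates for the spatial increments of $\h_1$ (available from $\h_1(y)=\sup_z(\h_0(z)+\cl(z,0;y,1))$, the growth hypothesis on $\h_0$, and the decay bounds of Section~\ref{sec:dl-decay}), then shows $\Pr(\sup_{|x|>R}(A_\e(x)-\tfrac12 x^2)\ge s)\le e^{-s^{3/2}}$ uniformly in small $\e$. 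So only $|x|\le R$ matters, up to an $e^{-s^{3/2}}$ error. \emph{(b) Discretization.} Partition $[-R,R]$ into $O(R)$ unit intervals $I_i=[x_i-1,x_i+1]$; H\"older-type modulus-of-continuity tail bounds for $\calA$ and for $y\mapsto\h_1(y)-\h_1(0)$ show that the event that some $I_i$ carries oscillation $\ge\delta's$ has probability $\le e^{-s^{3/2}}$, so up to that error $\{\sup_{|x|\le R}(A_\e+B_\e)\ge s\}\subseteq\bigcup_i\{A_\e(x_i)+B_\e(x_i)\ge(1-\delta')s\}$. \emph{(c) Per-point bound.} Using $A_\e\perp B_\e$,
\begin{align*}
\Pr\bigl(A_\e(x_i)+B_\e(x_i)\ge(1-\delta')s\bigr)=\Ex\Bigl[\Pr\bigl(A_\e(x_i)\ge(1-\delta')s-B_\e(x_i)\,\big|\,B_\e\bigr)\Bigr],
\end{align*}
and I would feed in a \emph{quantitative, uniform-in-$\e$} form of the local Brownianity of $\h_1$ with the sharp constant: for every $\delta'>0$ there are $\e_0,u_0>0$ with $\Pr(A_\e(x)\ge u)\le\exp(-(1-\delta')\tfrac{u^2}{4|x|})$ for $|x|\le R$, $u\ge u_0$, $\e<\e_0$ (the $\tfrac1{4|x|}$ reflecting the diffusion coefficient $2$ of the limiting Brownian motion). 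Combining this with the Tracy--Widom tail of $B_\e(x_i)=\calA(x_i)-x_i^2$ and optimizing over the value of $\calA(x_i)$,
\begin{align*}
\Pr\bigl(A_\e(x_i)+B_\e(x_i)\ge(1-\delta')s\bigr)\le\exp\Bigl(-(1-O(\delta'))\min_{w\ge0}\Bigl[\tfrac{(s+x_i^2-w)^2}{4|x_i|}+\tfrac43 w^{3/2}\Bigr]+o(s^{3/2})\Bigr).
\end{align*}

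It then remains to solve the variational problem, and a Lagrange computation gives
\begin{align*}
\min_{x>0,\,w\ge0}\Bigl[\frac{(s+x^2-w)^2}{4x}+\frac43 w^{3/2}\Bigr]=\frac23 s^{3/2},
\end{align*}
with the minimum attained at $x=\tfrac12 s^{1/2}$ and $w=\tfrac14 s$; informally, the cheapest way to push $\nabla\h_\e$ past $s$ is for the Brownian increment to reach level $s$ at spatial scale $\tfrac12 s^{1/2}$ (cost $e^{-\frac12 s^{3/2}}$) while the Airy sheet simultaneously sits at height $\tfrac14 s$ there (cost $e^{-\frac16 s^{3/2}}$), and $\tfrac12+\tfrac16=\tfrac23$. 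Since the union bound over the $O(s^{1/2})$ indices $i$ only costs a polynomial prefactor, choosing $\delta'$ small in terms of $\delta$ and then $s_0,\e_0$ accordingly gives $\Pr(\nabla\h_\e\ge s)\le\exp(-(\tfrac23-\delta)s^{3/2})$, which is \eqref{eq:arek}.

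The hard part is step (c): I need the local Brownianity of $\h_1$ in a quantitative form that is uniform in $\e$ and carries the \emph{sharp} Gaussian constant, which the qualitative convergence of Theorem~4.14 of \cite{MQR16} does not provide. For Brownian initial data this is free, since $y\mapsto\h_1(y)-\h_1(0)$ is then exactly a two-sided Brownian motion of diffusion coefficient $2$; hence $\nabla\h_\e$ is equal in law to the Baik--Rains variable $\sup_x(\B(x)+\calA(x)-x^2)$ for every $\e$, and \eqref{eq:arek} is just the sharp Baik--Rains upper tail of \cite{ferrari2021upper}. For narrow wedge and the functional data of Theorem~\ref{thm:main} one must instead extract the sharp increment tail from $\h_1(y)=\sup_z(\h_0(z)+\cl(z,0;y,1))$ together with fine regularity/tail estimates for $\cl$; the restriction $\e<\e_0$ then enters at several points --- for instance to keep the parabola $-x^2$ inside $B_\e$ from being overwhelmed by the growth that $A_\e$ inherits from the at-most-parabolic initial data \eqref{eq:condd}, and to reconcile the ``local'' Brownian window of step~(c) with the ``global'' parabolic-decay regime of step~(a). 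All of this is carried out in Proposition~\ref{prop:stincrementtail}, of which the present statement is the upper-tail consequence.
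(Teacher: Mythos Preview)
Your overall architecture (localize to $|x|\le Ks^{1/2}$, discretize, per-point bound, variational optimization giving $\tfrac23 s^{3/2}$) is coherent and the optimization in step~(c) is correct, but the proof has a genuine gap at exactly the point you flag: the inequality
\[
\Pr\bigl(A_\e(x)\ge u\bigr)\le \exp\Bigl(-(1-\delta')\tfrac{u^2}{4|x|}\Bigr),
\]
uniform in small $\e$ and with the \emph{sharp} constant $1/4$, is not something you actually establish.  You note correctly that the qualitative local Brownianity of \cite{MQR16} is not enough, and then defer to Proposition~\ref{prop:stincrementtail} --- but the present statement \emph{is} the upper-tail content of Proposition~\ref{prop:stincrementtail}, so this is circular.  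Moreover, even your step~(a) hides the same difficulty: controlling $\sup_{|x|>R}(A_\e(x)-\tfrac12 x^2)$ with a $e^{-s^{3/2}}$ tail again requires quantitative uniform-in-$\e$ control on the spatial increments of $\h_1$, and the ``dyadic decomposition fed by Section~\ref{sec:dl-decay}'' you invoke does not by itself produce this without a mechanism to compare $\h_1(\cdot)-\h_1(0)$ to Brownian motion.

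The paper's approach supplies precisely that missing mechanism, and it is a genuinely different idea from yours.  Rather than proving sharp one-point tails for $A_\e(x)$, it uses Pimentel's \emph{Brownian replacement trick} (Lemma~\ref{lem:argmaxprop}\ref{lem:compare}): on a high-probability event $\mathsf{E}_s^{\mu}(a)$ defined through the argmax locations, the increment \emph{process} $x\mapsto\h_1(x)-\h_1(0)$ is sandwiched between the increment processes of $\h_1^{\pm\mu}$, the KPZ fixed point started from $\B(\cdot)\pm\mu(\cdot)$.  By stationarity (Lemma~\ref{lem:2.3}\ref{2.3c}) the latter are \emph{exactly} $\B(\cdot)\pm\mu(\cdot)$ in law, so after choosing $\mu=s^{1/2}\e^{-1/4}$ the drift is $o(s)$ and the whole variational problem $\sup_x(A_\e(x)+\calA(x)-x^2)$ is dominated by $\sup_x(\B(x)+\calA(x)-x^2)=\BR$ plus a negligible term; one then simply quotes the sharp Baik--Rains upper tail from \cite{ferrari2021upper}.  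The localization step~(a) is handled by Lemma~\ref{lem:maxbound}, whose proof \emph{also} relies on the same replacement trick.  In short, the paper replaces your hard one-point estimate by a soft process-level coupling, and this coupling is the key idea your proposal is missing.
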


	Let us briefly explain the proof idea behind the above result. Towards this end, let us define
	\begin{align} \label{eq:arek2}
		\nabla \h_{\e}^I:=  \sup_{x\in I} \left[\frac{\h_1(\e^{2/3}x)-\h_1(0)}{\e^{1/3}}+\calA(x)-x^2\right],
	\end{align}
	where $\calA$ is independent of $\h_1$.	The proof of the above proposition proceeds in two steps:
	
	\begin{enumerate}[label=(\roman*)]
		\item \label{item1} There exists an absolute constant $b>0$ such that
		$\Pr(\nabla \h_\e\neq \nabla \h_{\e}^{[-b\sqrt{s},b\sqrt{s}]}) \le \exp(-\frac23s^{3/2})$.
		\item \label{item2} Proving Proposition \ref{sims} with $\nabla \h_\e$ replaced by $\nabla \h_{\e}^{[-b\sqrt{s},b\sqrt{s}]}$. 
	\end{enumerate}
	
	We first describe how to show item \ref{item2}. For instructive purposes, here we explain how to derive it for $\nabla\h_{\e}^{[0,b\sqrt{s}]}$. Set $a:=\e^{2/3}b\sqrt{s}$. Let us denote
	\begin{align*}
		Z(\pm a):=\underset{x\in\R}{\operatorname{argmax}} (\h_0(x)+\cl(x,0;\pm a,1)), \quad  Z^{\mu}(\pm a):=\underset{x\in \R}{\operatorname{argmax}} (\B(x)+\mu x+\cl(x,0;\pm a,1)),
	\end{align*}
	where $\B$ is an independent two sided Brownian motion with diffusion coefficient $2$. Let us assume for simplicity the above argmax are uniquely defined almost surely. The key idea behind our proof is a Brownian replacement principle that will allow us to \red{compare the KPZ fixed point starting from a general initial data with that starting from} the Brownian initial data on certain events. Indeed,	owing to the geometric properties of the KPZ fixed point, \cite{pim} argues for any $\mu>0$ on the event $\EE^{\mu}(a):=\{Z(a)\le Z^{\mu}(-a), Z(-a)\ge Z^{-\mu}(a)\}$ one has
	\begin{align*}
		&\h_1^{-\mu} (x) - \h_1^{-\mu} (0) \leq \h_1(x)-\h_1(0) \le \h_1^{\mu}(x)-\h_1^{\mu}(0), \mbox{ for }x\in [0,a],\\
		&\h_1^{\mu} (x) - \h_1^{\mu} (0) \leq \h_1(x)-\h_1(0) \le \h_1^{-\mu}(x)-\h_1^{-\mu}(0), \mbox{ for }x\in [-a, 0]
	\end{align*}
	where we set $\h_1^{\mu}(x):=\sup_{z\in \R} (\B(z)+\mu x+\cl(z,0;x,1))$ for all $x\in \R$. 
	
	\medskip
	
	We call the above trick by the name \emph{Brownian replacement trick} which was first discovered in the present form by Leandro Pimentel \cite{pim14}. \red{It is essentially a property of the basic coupling in last passage percolation models}.  On $\EE^{\mu}(a)$, we can then transfer our analysis to the KPZ fixed point started with Brownian motion with drift as initial data. The advantage of working with this initial data is that such data is known to be stationary for the KPZ fixed point (see Theorem 4.5 \cite{MQR16}). In particular we have $\h_1^{\mu}(x)-\h_1^{\mu}(0) \stackrel{d}{=} \B(x)+\mu x$. Under the diffusive scaling appearing in \eqref{eq:arek1},   $\e^{-1/3}(\h_1^{\mu}(\e^{2/3} x)-\h_1^{\mu}(0))$ is then equal in distribution to $\bm(x)+\e^{1/3}\mu x$. As $x$ varies in $[0,b\sqrt{s}]$, the drift term is bounded by $\e^{1/3}\mu b\sqrt{s}$. Thus on $\EE^{\mu}(a)$,
	$$\nabla \h_{\e}^{[0,b\sqrt{s}]} \le \sup_{x\in \R} (\bm(x)+\calA(x)-x^2)+\e^{1/3}\mu b\sqrt{s}.$$
	Taking $\mu=\e^{-1/4}\sqrt{s}$, one can ensure the drift term above is negligible compared to $s$.  In fact, for this choice of $\mu$ and $a$, we have $\Pr(\EE^{\mu}(a)^c) \le \exp(-\frac23s^{3/2})$. To prove this, one relies on exponential type tail estimates for the argmax $Z(\pm a), Z^{\mu}(\pm a)$ that are developed in Section \ref{sec3} using parabolic decay estimates of directed landscape (Lemma \ref{lem:bbv}).

	Thus  \eqref{eq:arek} for $\nabla\h_{\e}^{[0,b\sqrt{s}]}$ now follows from upper tail asymptotics of Baik-Rains distribution which is given by \cite{ferrari2021upper}. An analogous argument provides the same upper tail estimate for $\nabla\h_{\e}^{[-b\sqrt{s},0]}$. This justifies item \ref{item2}. Item \ref{item1} appears as Lemma \ref{lem:maxbound} later. Its proof also involves the above Brownian replacement trick shown and tail estimates for the argmax. For brevity, we skip the details here and encourage the readers to read the details in Section \ref{sec3}.

	The above description of the main argument for proving \eqref{eq:arek} is of course quite reductive, and the full argument, presented in Section \ref{sec4.3}, relies on various technical estimates related to the location of the argmax that are discussed in Section \ref{sec3}. Apart from the temporal increment tail, these estimates on the argmax also allow us to extract other probabilistic information or the KPZ fixed point such as growth estimates (Proposition \ref{prop:gc}) and spatial modulus of continuity (Proposition \ref{prop:mc}).

	In fact, the precise statement of Proposition \ref{prop:stincrementtail} can handle absolute value of any small scaled increment in the vicinity of $1$. As a consequence, this leads to suitable temporal modulus of continuity type estimates in Corollary \ref{cor:pmod} and Proposition \ref{pp:weakc}. Combining the temporal modulus of continuity estimates along with sharp upper tail asymptotics of the increments leads to the upper bound for the short-time LIL.

	\medskip
	
	The lower bound for the short-time LIL on the other hand relies on showing a certain kind of independence structure in the temporal increments of the KPZ fixed point.  Loosely speaking we show that for two small increments  $\e_1\ll \e_2 \ll 1$, $\nabla \h_{\e_1}$ and $\nabla \h_{\e_2}$ are approximately independent. 
	Keeping $\nabla \h_{\e_2}$ as it is, we construct a \textit{proxy} for $\nabla \h_{\e_1}$ suitable for comparison with $\nabla \h_{\e_2}$. Towards this end we define
	\begin{align*}
		\nabla \h_{\e_1 \downarrow \e_2}:= \sup_{x\in \R} \left[\frac{\h_1(\e_1^{2/3}x)-\h_1(0)}{\e_1^{1/3}}+\frac{\cl(\e_1^{2/3}x,1+\e_2;0,1+\e_1)}{\e_1^{1/3}}\right].
	\end{align*}
	In plain words, we obtain $\nabla \h_{\e_1 \downarrow \e_2}$ from $\nabla \h_{\e_1}$ by na\"ively \textit{replacing the directed landscape} $\cl(\e_1^{2/3}x,1; 0,1+\e_1)$ by $\cl(\e_1^{2/3}x,1+\e_2; 0,1+\e_1)$ (and hence the name `Landscape Replacement').  Note that the proxy $\nabla \h_{\e_1 \downarrow \e_2}$ is same as $\nabla \h_{\e_1-\e_2}$ in distribution and furthermore the landscape part of the proxy is independent from that of $\nabla\h_{\e_2}$. As $\frac{\e_1}{\e_2}\to \infty$, this proxy turns out to be very good approximation of $\nabla \h_{\e_1}$ in the following sense.
	
	\begin{theorem}[Simpler version of Theorem \ref{thm:lr}]\label{slr} There exists a constant $\Con>0$ depending on the initial data  such that for $\e_2\ll \e_1\ll 1$  we have 
		\begin{align} \label{eq:simp}
			\Pr\left(|\nabla \h_{\e_1}-\nabla \h_{\e_1 \downarrow \e_2}|\ge 1\right) \le \Con\exp\left(-\tfrac1{\Con}\left(\tfrac{\e_1}{\e_2}\right)^{\frac3{16}}\right).
		\end{align} 
	\end{theorem}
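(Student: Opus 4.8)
\emph{Strategy and reduction.} The plan is to compare the two variational problems pointwise on a bounded spatial window, after localizing the maximizers, and then to control the difference of the two directed-landscape fields on that window via the metric composition law. Write
\[
\nabla\h_{\e_1}=\sup_{x\in\R}\big[F(x)+G_1(x)\big],\qquad \nabla\h_{\e_1\downarrow\e_2}=\sup_{x\in\R}\big[F(x)+G_2(x)\big],
\]
where $F(x):=\e_1^{-1/3}\big(\h_1(\e_1^{2/3}x)-\h_1(0)\big)$ is common to both, $G_1(x):=\e_1^{-1/3}\cl(\e_1^{2/3}x,1;0,1+\e_1)$, and $G_2(x):=\e_1^{-1/3}\cl(\e_1^{2/3}x,1+\e_2;0,1+\e_1)$. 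Set $L:=(\e_1/\e_2)^{1/16}$. Combining the growth estimate for $\h_1$ (Proposition~\ref{prop:gc}) — which after the $\e_1$-rescaling bounds $F(x)$ above by $C\e_1 x^2$ plus sub-quadratic fluctuations, and for narrow wedge and Brownian data has no $x^2$ term at all — with the parabolic decay of the directed landscape (Lemma~\ref{lem:bbv}) — which gives $G_i(x)\le -cx^2$ plus sub-quadratic fluctuations — one shows that the maximizers of both $\nabla\h_{\e_1}$ and $\nabla\h_{\e_1\downarrow\e_2}$ lie in $[-L,L]$ outside an event of probability at most $\Con\exp\!\big(-\tfrac1\Con L^3\big)=\Con\exp\!\big(-\tfrac1\Con(\e_1/\e_2)^{3/16}\big)$. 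Here one uses the argmax/geodesic tail estimates of Section~\ref{sec3}: since $\nabla\h_{\e_1}=\e_1^{-1/3}(\h_{1+\e_1}(0)-\h_1(0))$, the maximizer in $x$ corresponds to the position at time $1$ of the geodesic realizing $\h_{1+\e_1}(0)$, whose transversal fluctuation over the last increment $\e_1$ is of order $\e_1^{2/3}$, which makes the localization cost $\exp(-cL^3)$ independently of $\e_1$; for $\nabla\h_{\e_1\downarrow\e_2}$ one invokes $\nabla\h_{\e_1\downarrow\e_2}\stackrel{d}{=}\nabla\h_{\e_1-\e_2}$ with $\e_1-\e_2\in[\e_1/2,\e_1]$. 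On the intersection of the two localization events both suprema are attained over the common index set $[-L,L]$, so the elementary bound $|\sup(F+G_1)-\sup(F+G_2)|\le\sup|G_1-G_2|$ on $[-L,L]$ gives
\begin{align}\label{eq:redux}
|\nabla\h_{\e_1}-\nabla\h_{\e_1\downarrow\e_2}|\le \e_1^{-1/3}\sup_{|a|\le \e_1^{2/3}L}\big|\cl(a,1;0,1+\e_1)-\cl(a,1+\e_2;0,1+\e_1)\big|,
\end{align}
and it suffices to show the right-hand side of \eqref{eq:redux} is $o(1)$ (in particular $<1$) outside an event of probability at most $\Con\exp(-\tfrac1\Con(\e_1/\e_2)^{3/16})$.

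\emph{Lower bound for the field difference.} Discard the supremum in \eqref{metlaw}: choosing the intermediate point $z=a$ at time $1+\e_2$ gives $\cl(a,1;0,1+\e_1)\ge\cl(a,1;a,1+\e_2)+\cl(a,1+\e_2;0,1+\e_1)$, hence $\cl(a,1;0,1+\e_1)-\cl(a,1+\e_2;0,1+\e_1)\ge\cl(a,1;a,1+\e_2)$. By the scaling in Definition~\ref{def:dl} this equals $\e_2^{1/3}$ times a point of an Airy sheet on its diagonal, whose lower tail is of Tracy--Widom type; discretizing $[-\e_1^{2/3}L,\e_1^{2/3}L]$ at scale $\e_2^{2/3}$ and using the modulus of continuity of the Airy sheet yields $\inf_{|a|\le\e_1^{2/3}L}\cl(a,1;a,1+\e_2)\ge-\e_1^{1/3}$ outside an event of probability at most $\Con\exp(-\tfrac1\Con(\e_1/\e_2))$, which is negligible against the localization bound.

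\emph{Upper bound for the field difference.} Here we must show, uniformly over $|a|\le\e_1^{2/3}L$, that
\[
\sup_{z\in\R}\Big[\cl(a,1;z,1+\e_2)+\cl(z,1+\e_2;0,1+\e_1)-\cl(a,1+\e_2;0,1+\e_1)\Big]\le\e_1^{1/3}.
\]
The crucial point is that the large parabolic parts of $\cl(a,1;0,1+\e_1)$ and $\cl(a,1+\e_2;0,1+\e_1)$ cancel, so after expanding the left side through \eqref{metlaw} we are left only with a \emph{local} increment of the long field. Fix a threshold $\delta\asymp\e_2^{2/3}\log^{C}(\e_1/\e_2)$. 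For $|z-a|>\delta$ the parabolic decay of $\cl(a,1;z,1+\e_2)$ from Lemma~\ref{lem:bbv} gives $\cl(a,1;z,1+\e_2)\le -c(z-a)^2/\e_2+(\text{fluctuation})$, and since $\e_2\ll\e_1$ this quadratic term dominates both the modulus-of-continuity growth of $\cl(\cdot,1+\e_2;0,1+\e_1)-\cl(a,1+\e_2;0,1+\e_1)$ across the window and (for $|z|$ beyond $\asymp\e_1^{2/3}L$) the parabolic decay of $\cl(z,1+\e_2;0,1+\e_1)$ itself, so no such $z$ is competitive — this is where one needs $L\lesssim\sqrt{\e_1/\e_2}$, satisfied by our choice. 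For $|z-a|\le\delta$ one bounds $\cl(a,1;z,1+\e_2)$ by $\sup_{|z-a|\le\delta}\cl(a,1;z,1+\e_2)$, which is of order $\e_2^{1/3}$ up to logarithmic and Gaussian corrections, and $\cl(z,1+\e_2;0,1+\e_1)-\cl(a,1+\e_2;0,1+\e_1)$ by the spatial modulus of continuity of $\cl(\cdot,1+\e_2;0,1+\e_1)$ over displacements at most $\delta$, which is of order $\e_1^{1/3}(\delta\e_1^{-2/3})^{1/2}=\delta^{1/2}=o(\e_1^{1/3})$. Making these bounds uniform in $a$ by a union bound over a $\delta$-net of $[-\e_1^{2/3}L,\e_1^{2/3}L]$ and over dyadic scales of $|z-a|$, the failure probability is at most $\Con\exp(-\tfrac1\Con(\e_1/\e_2)^{\beta})$ for some $\beta>3/16$.

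\emph{Conclusion and main obstacle.} Combining the three parts: on the intersection of all the good events the right-hand side of \eqref{eq:redux} is at most $\e_1^{-1/3}\cdot o(\e_1^{1/3})=o(1)<1$ once $\e_1/\e_2$ is large enough, and the total failure probability is at most $\Con\exp(-\tfrac1\Con(\e_1/\e_2)^{3/16})$, dominated by the localization term; this proves \eqref{eq:simp}. The delicate step is the uniform upper bound for the field difference: it forces us to juggle two directed-landscape fields living at the two different temporal scales $\e_2$ (inner) and $\e_1$ (outer) — using the steep parabolic decay of the short field to pin the inner maximizer $z$ within $\delta\asymp\e_2^{2/3}\log^C$ of $a$, and the spatial modulus of continuity of the long field to transfer back from $z$ to $a$ — while running a union bound over $|a|\le\e_1^{2/3}L$ whose cost still beats $\exp(-c(\e_1/\e_2)^{3/16})$ and while keeping the error strictly below $\e_1^{1/3}$ rather than merely of that order. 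The exponent $3/16$ is not optimized; $L=(\e_1/\e_2)^{1/16}$ is simply a convenient choice comfortably meeting both the demand for a wide window (cheap localization, cost $\exp(-cL^3)$) and for a narrow one (parabolic decay of the short field overcoming the long field's increment across it).
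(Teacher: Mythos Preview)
Your argument is correct, but it takes a genuinely different route from the paper's proof of Theorem~\ref{thm:lr}. The paper localizes to the window $|y|,|z|\le 1$ in the \emph{unrescaled} variable, and then in the key upper-bound step (Step~3 there) keeps $\h_1$ in the game: it writes
\[
\til\h_{1+a^{-1}+(at)^{-1}}(0)-\til H_1^{\cdots}\ \le\ \sup_{|y|,|z|\le 1}\Big[\h_1(y)-\h_1(z)-\tfrac{at(y-z)^2}{4}\Big]+\sup\big[\cl(y,1;z,1+\tfrac1{at})+\tfrac{at(y-z)^2}{4}\big],
\]
and controls the first supremum via the spatial modulus of continuity of the KPZ fixed point $\h_1$ (Proposition~\ref{prop:mc}) together with the optimization lemma (Lemma~\ref{l:rai}), and the second via the parabolic decay Lemma~\ref{lem:bbv}. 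By contrast, you first localize in the rescaled variable to $|x|\le L=(\e_1/\e_2)^{1/16}$ and then eliminate $\h_1$ entirely through the elementary bound $|\sup(F+G_1)-\sup(F+G_2)|\le\sup|G_1-G_2|$, reducing to a pure directed-landscape estimate. This is in fact exactly the strategy the paper uses for the \emph{long-time} replacement (Theorem~\ref{ind}): localize, drop the common initial-data term, and bound the landscape difference. What your route buys is modularity --- no need for a separate modulus-of-continuity result for $\h_1$ --- while the paper's route for the short-time case exploits Proposition~\ref{prop:mc} directly, trading one landscape argument for another and producing the specific permissibility constraints of Definition~\ref{pair}.

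One simplification worth noting: your ``upper bound for the field difference'' step via metric composition and the two-scale ($\delta$ inner, $\e_1^{2/3}L$ outer) decomposition is more elaborate than necessary. After passing to $\ck$ (the parabola difference is $O(\e_1^{1/3}(\e_2/\e_1)^{7/8})=o(\e_1^{1/3})$ on your window), the quantity $\e_1^{-1/3}\sup_{|a|\le\e_1^{2/3}L}|\ck(a,1;0,1+\e_1)-\ck(a,1+\e_2;0,1+\e_1)|$ rescales to $\sup_{|a'|\le L}|\ck(a',0;0,1)-\ck(a',\e_2/\e_1;0,1)|$, which can be bounded in one shot by the temporal modulus of continuity of $\ck$ (Proposition~10.5 in \cite{DOV18}), exactly as in the paper's proof of Theorem~\ref{ind}. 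This gives failure probability $\lesssim L\exp(-c(\e_1/\e_2)^{1/2})$, comfortably inside your localization budget $\exp(-cL^3)=\exp(-c(\e_1/\e_2)^{3/16})$.
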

	
	Due to the presence of $\frac{\e_1}{\e_2}$ term on the r.h.s.~of \eqref{eq:simp}, this suggests $\nabla \h_{\e_1}$ and $\nabla \h_{\e_2}$ are approximately independent when $\e_1$ and $\e_2$ are far apart on a \textit{multiplicative scale}, i.e., $\frac{\e_1}{\e_2} \gg 1$.  This suggests the decorrelation of the temporal increments happens at a slow rate and thus produces a law of iterated logarithm behavior.
	
	\smallskip
	
	Notice that the above theorem only ensures the landscape part of $\nabla\h_{\e_1}$ and $\nabla\h_{\e_2}$ appearing in the variational problem in \eqref{eq:arek1} are approximately independent. We now briefly explain how the rescaled spatial processes appearing in corresponding variational problems are also approximately independent. This part of the argument proceeds similarly to the proof of Proposition \ref{sims} explained above. Indeed, thanks to the Brownian replacement trick explained earlier, we can again transfer to Brownian motion with drifted initial data. The drift term can again be ignored due to the presence of diffusive scaling. Restricting the supremum to appropriate disjoint intervals and using independence of the increments for Brownian motion, one can obtain an independence structure within the temporal increments of the KPZ fixed point. {The details of the proof of short time landscape replacement and the extraction of lower bound from it are presented in Section \ref{sec5.2} and Section \ref{sec:slwld} respectively.}   
	
	\smallskip
	
	{For the long-time LIL, one of the key input is the sharp one point tail estimates for $t^{-1/3}\h_t(0)$, available from \cite{ferrari2021upper}. Combining this with the temporal modulus of continuity estimates (Proposition \ref{prop:stincrementtail}) discussed earlier, produces the upper bound for the long-time LIL result. For the lower bound, we rely on a similar \emph{landscape replacement} trick as discussed above. Given two temporal KPZ fixed point height functions, $\h_{t_1}(0)$ and $\h_{t_2}(0)$ with $t_2\gg t_1$, as before we construct a proxy for $\h_{t_2}(0)$ by suitably changing the directed landscape part of the variational problem in \eqref{def:kpzfp}. More precisely, we define the proxy to be
		$$\h^{t_2\downarrow t_1}:=\sup_{x\in \R} \left[\h_0(x)+\cl(x,t_1;0,t_2)\right].$$
		Just as in Theorem \ref{slr}, this proxy turns out to be good approximation $\h_{t_2}(0)$ in the following sense.
		\begin{theorem}[Simpler version of Theorem \ref{ind}] There exists a constant $\Con>0$ depending on the initial data  such that for $t_2\gg t_1\gg 1$  we have 
			\begin{align*}
				\Pr\left(t_2^{-1/3}|\h_{t_2}(0)- \h^{t_2 \downarrow t_1}|\ge 1\right) \le \Con\exp\big(-\tfrac1{\Con}\big(\tfrac{t_2}{t_1}\big)^{\frac1{3}}\big).
			\end{align*} 
		\end{theorem}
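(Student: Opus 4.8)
The plan is to use the metric composition law to couple $\h_{t_2}(0)$ with its proxy $\h^{t_2\downarrow t_1}$ through a common top piece of the directed landscape, and then to show that the discrepancy between them is governed by a quantity that does not see the initial data at all. Setting $g(w):=\cl(w,t_1;0,t_2)$, an application of \eqref{metlaw} at the intermediate time $t_1$ gives
\[
  \h_{t_2}(0)=\sup_{z,w\in\R}\bigl[\h_0(z)+\cl(z,0;w,t_1)+g(w)\bigr],\qquad
  \h^{t_2\downarrow t_1}=\sup_{x\in\R}\bigl[\h_0(x)+g(x)\bigr],
\]
and by Definition~\ref{def:dl} the field $g$ is independent of $\cl(\cdot,0;\cdot,t_1)$. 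Let $z^{\star}$ be the (a.s.\ unique) maximizer of $z\mapsto\h_0(z)+\cl(z,0;0,t_2)$, i.e.\ of $\h_{t_2}(0)$, and let $x_0$ be the maximizer of $x\mapsto\h_0(x)+g(x)$, i.e.\ of $\h^{t_2\downarrow t_1}$. Writing $D(x):=\cl(x,0;0,t_2)-\cl(x,t_1;0,t_2)$, using the trial point $x=z^{\star}$ inside the sup defining $\h^{t_2\downarrow t_1}$ and the trial point $x=x_0$ inside the sup defining $\h_{t_2}(0)$ yields the two-sided bound
\[
  D(x_0)\ \le\ \h_{t_2}(0)-\h^{t_2\downarrow t_1}\ \le\ D(z^{\star}).
\]
The key structural observation is that $D$ involves only the directed landscape; the initial data enters solely through the locations $z^{\star}$ and $x_0$, which I expect to concentrate at scale $t_2^{2/3}$.

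Next I would localize the two argmaxes. Combining the parabolic decay of the directed landscape (Lemma~\ref{lem:bbv}) with the growth hypothesis on $\h_0$, the tail estimates for the location of the argmax developed in Section~\ref{sec3} should yield, for $R$ of the form $\Con\,t_2^{2/3}(t_2/t_1)^{\rho}$ with a suitable small exponent $\rho>0$, that $\Pr\bigl(|z^{\star}|\vee|x_0|>R\bigr)\le\Con\exp\bigl(-\tfrac1\Con(t_2/t_1)^{1/3}\bigr)$; this is the step that pins down the $(t_2/t_1)^{1/3}$ rate. On the complementary event the two-sided bound above reduces the problem to showing that $\sup_{|x|\le R}|D(x)|\le t_2^{1/3}$ off an event of the same size.

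Then I would control $D$ uniformly on $[-R,R]$. For the lower bound, taking $w=x$ in \eqref{metlaw} gives $D(x)\ge\cl(x,0;x,t_1)$; by shift invariance of $\cl$ and Definition~\ref{def:dl} each $\cl(x,0;x,t_1)\stackrel{d}{=}\cl(0,0;0,t_1)\stackrel{d}{=}t_1^{1/3}\tw$, whose lower tail is at most $\Con e^{-s^{3}/\Con}$, so a union bound over the polynomially many (in $t_2/t_1$) windows of width $t_1^{2/3}$ covering $[-R,R]$ — on each of which $x\mapsto\cl(x,0;x,t_1)$ oscillates by $O(t_1^{1/3})$ with stretched-exponential tails — gives $\inf_{|x|\le R}\cl(x,0;x,t_1)\ge-t_2^{1/3}$ off an event of probability $\le\Con\exp\bigl(-\tfrac1\Con(t_2/t_1)^{1/3}\bigr)$. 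For the upper bound, \eqref{metlaw} gives $D(x)=\sup_w\bigl[\cl(x,0;w,t_1)+g(w)-g(x)\bigr]$; Lemma~\ref{lem:bbv} confines the relevant $w$ to a window $|w-x|\lesssim t_1^{2/3}(\log(t_2/t_1))^{2/3}$, on which $\cl(x,0;w,t_1)$ is a near-diagonal slab maximum bounded by $\Con t_1^{1/3}(\log(t_2/t_1))^{2/3}$ with good tails, $g(w)-g(x)$ is controlled by the local Brownian (diffusion coefficient $2$) modulus of continuity of $g=\cl(\cdot,t_1;0,t_2)$ over a window of size $\ll t_2^{2/3}$, and the residual deterministic parabola increment $(x^{2}-w^{2})/(t_2-t_1)$ is $\ll t_1^{1/3}$; a union bound over $|x|\le R$ then gives $\sup_{|x|\le R}D(x)\le t_2^{1/3}$ off an event of probability $\le\Con\exp\bigl(-\tfrac1\Con(t_2/t_1)^{1/3}\bigr)$, where $t_2\gg t_1$ is used to absorb the polynomial and logarithmic factors in $t_2/t_1$. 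Combining this with the localization of $z^{\star},x_0$ and the two-sided bound finishes the proof.

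I expect the main obstacle to be this last step: the uniform control of $|D(x)|$ over an interval of width of order $t_2^{2/3}(t_2/t_1)^{O(1)}$ with the stretched-exponential rate $\exp\bigl(-c(t_2/t_1)^{1/3}\bigr)$. This requires carefully dovetailing the parabolic decay estimate (Lemma~\ref{lem:bbv}), the Tracy--Widom GUE one-point lower tail of $\cl(0,0;0,t_1)$, and the local Brownian regularity of the top landscape $\cl(\cdot,t_1;0,t_2)$, while keeping the number of terms in the various union bounds (polynomial in $t_2/t_1$) dominated by the tail. A secondary technical point is to make sure that the argmax location estimates of Section~\ref{sec3} are genuinely uniform over the large times $t_1$ and $t_2$, rather than valid only for a fixed time horizon.
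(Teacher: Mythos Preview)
Your two-sided bound $D(x_0)\le \h_{t_2}(0)-\h^{t_2\downarrow t_1}\le D(z^{\star})$ with $D(x)=\cl(x,0;0,t_2)-\cl(x,t_1;0,t_2)$ is correct and is morally the same reduction the paper makes. Where you diverge is in how you control $D$ and how you handle the argmax localization, and on both points the paper takes a much shorter route.

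\textbf{Rescaling versus working at the original scale.} The paper's first move is to rescale space and time by $(t_2-t_1)^{2/3}$ and $t_2-t_1$, turning the problem into one on the time window $[0,1+t^{-1}]$ with $t\approx t_2/t_1$. The key observation (explained at the start of Section~\ref{sec5.1}) is that the rescaled initial data $\til\h_0^{(v)}(x)=v^{-1}\h_0(v^2x)$ stays in the \emph{same} \st\ with the \emph{same} parameters for every $v\ge1$. Thus Corollary~\ref{lem:exitbound} applies directly at the rescaled scale, and the argmaxes of both variational problems lie in $[-t^{1/6},t^{1/6}]$ off an event of probability $\Con\exp(-\tfrac1\Con t^{1/2})$. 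This dissolves what you flag as a ``secondary technical point'' (uniformity of the Section~\ref{sec3} estimates over large times): no new estimate is needed, only the rescaling.

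\textbf{Temporal H\"older continuity versus metric-composition decomposition.} After restricting to $|z|\le t^{1/6}$ the paper bounds, via the elementary inequality $|\sup(f+g_1)-\sup(f+g_2)|\le\sup|g_1-g_2|$,
\[
\big|\til{\mathfrak g}_{1+t^{-1}}-\til G^{(1+t^{-1})\downarrow t^{-1}}\big|\le \sup_{|z|\le t^{1/6}}\big|\ck(z,0;0,1+t^{-1})-\ck(z,t^{-1};0,1+t^{-1})\big|+\tfrac{t^{1/3}}{1+t},
\]
and then invokes the temporal modulus of continuity of the stationary landscape $\ck$ (Proposition~10.5 of \cite{DOV18}) to bound the right-hand side by $\tfrac12$ off an event of probability $\Con t^{1/6}\exp(-\tfrac1\Con t^{1/3})$. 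This single H\"older-continuity input replaces your entire ``main obstacle'' paragraph: there is no need to split $D$ into a bottom slab $\cl(x,0;w,t_1)$, a top spatial increment $g(w)-g(x)$, and a parabola correction, nor to localize the intermediate point $w$, nor to union-bound over windows.

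In short, your plan should work with enough bookkeeping, but the paper's argument is shorter: rescale so that Section~\ref{sec3} applies verbatim, then use the temporal H\"older regularity of $\ck$ in one stroke rather than reconstructing it from the metric composition law.
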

		Proof of the above result relies on precise estimate on the location of the argmax for the variational problem in \eqref{def:kpzfp}. This estimate is captured in Proposition \ref{lem:pr-tail1}. In the case of non-random initial data (i.e., $\h_0$ is non-random), the proxy $\h^{t_2\downarrow t_1}$ is independent of $\h_{t_1}(0)$. For Brownian initial data, the independence structure is not apriori clear since $\h^{t_2\downarrow t_1}$ and $\h_{t_1}(0)$ are coupled via the initial data $\h_0(\cdot)$. However, \emph{an almost independence structure} can be obtained by restricting the supremum on appropriate disjoint intervals. We refer the readers to Section \ref{sec5.1} and Section \ref{sec:lblil} for the technical details of the above conceptual argument.}
	
	\smallskip
	

	{As a passing comment, we mention that this landscape replacement idea previously appeared in a different guise for the KPZ equation with narrow wedge initial data in \cite{corwin2021kpz,dg21}. \red{For instance, \cite{dg21} (see Proposition~6.1) developed an analogue of landscape replacement trick for showing the law of iterated logarithms for the KPZ equation as time variable goes to $\infty$. Two main tools in \cite{dg21} are $(1)$ the convolution formula for the KPZ equation which is often regarded as the positive temperature analogue of the variational description of the KPZ fixed point and, $(2)$  the $\mathbf{H}$-Brownian Gibbs property of KPZ line ensemble \cite{CH16}. While the convolution formula of the KPZ equation is valid starting from any initial data, the $\mathbf{H}$-Brownian Gibbs property fails to hold when KPZ equation is started from an initial data which is not narrow wedge. This restricts \cite{dg21} to extend their results beyond the narrow wedge initial condition. In contrast to \cite{dg21}, the long time LILs  of Theorem~\ref{thm:longLILa} and~\ref{thm:longLILb} covers a large class of initial data by applying landscape replacement trick followed by using rich geometric structure of the directed landscape. On the other hand, the short time LIL of Theorem~\ref{thm:main} relies on the aforementioned Brownian replacement trick instead. This latter trick which reflects locally Brownian nature of the KPZ fixed point is very much different than the replacement tricks used in \cite{dg21}. In fact, the Brownian replacement trick allows us to handle increment of the spatial profile for a large class of initial data just based on one point tail estimates of the location of argmax in the variational formula of \eqref{def:kpzfp}, a property which is currently beyond reach for the KPZ equation or any other positive temperature models in the KPZ universality class. }
	

}
	
	\subsection{Literature review}	
	
	Our main results on the growth of tall temporal peaks and  local temporal growth of the KPZ fixed point draw inspirations from a series of efforts in understanding the geometry of the KPZ fixed point and the directed landscape. Being an universal scaling limit, the KPZ fixed point enjoys several rich properties and exhibits remarkable fractal geometry. For example, for all sub-parabolic initial data, $\h_t(x)$ is locally
	absolutely continuous with respect to Brownian motion for each $t>0$, and H\"older $1/3^{-}$ continuous in $t$ \cite{ch14,sv21}. Thus for rapidly decaying initial data, for each $t>0$ the $\h_t(\cdot)$ has a unique maximizer almost surely \cite{fqr13,ch14,pim14,sv21,chmm}. In this context, a recent line of inquiries includes the investigation of
	the fractal nature of exceptional times where $\h_t(\cdot)$ has multiple maximizers \cite{chmm,dau22}. 
	
	\smallskip

	Fractal geometry of the directed landscape and its geodesics has also been studied extensively in recent years (see \cite{g22} for a partial survey). In \cite{BGH19} the author initiated the study of the difference profile of the directed landscape $\calD(x,t):=\cl(1,0,x,t)-\cl(-1,0,x,t)$. Difference profiles exhibit rich fractal behavior \cite{BGH19,gz22}, has connection to disjointness of directed geodesics \cite{BGH20}, and are related to Brownian local time \cite{gh21}. Certain qualitative features of geodesics are also explored \cite{dsv20}.
	
	\smallskip
	
	Apart from the KPZ fixed point, long time law of iterated logarithms has also been rigorously shown in certain pre-limiting models in the KPZ universality class. For exponential last passage percolation, \cite{ledoux2018law,basu2019lower} identified correct scaling of the law of iterated logarithms for the $\limsup$ and $\liminf$ of the rescaled passage time. In the context of random matrices, \cite{paquette2017extremal} proved a law of fractional logarithm for the $\limsup$ of the sequence of eigenvalues in a GUE minor process.
	
	\smallskip
	
	For the KPZ equation, the precise long time LIL in the temporal direction was obtained in \cite{dg21}. Apart from the results related to LILs, \cite{dg21} also demonstrated certain fractal behavior of the level sets of the peaks for the KPZ equation. Understanding fractal nature of such level sets is important in the context of non-linear SDEs as it is intimately connected with the phenomenon of intermittency (see \cite{KKX17,KKX18, dg21, j21} are references there in).
	
	\smallskip
	
	Short time LIL for the temporal increment of the KPZ equation has been recently proved in \cite{das22}. The exponents for such LIL for the KPZ equation is marked different from our Theorem \ref{thm:main} as the local temporal structure for the KPZ equation is governed by a fractional Brownian motion of index $\frac14$ (see \cite{khoshnevisan2013weak,das22}).	
	
	\subsection*{Outline} The rest of the paper is organized as follows. In Section \ref{sec2} we collect several interesting properties and estimates for the directed landscape and the KPZ fixed point. In Section \ref{sec3} we determine quantitative estimates for the location of the supremum of the variational problem of the KPZ fixed point. These estimates appear in Proposition \ref{lem:pr-tail1}, Corollary \ref{lem:exitbound} and Proposition \ref{lem:maxbound}. In Section \ref{sec4}, we establish growth estimates (Proposition \ref{prop:gc}), spatial modulus of continuity (Proposition \ref{prop:mc}), and and temporal modulus of continuity (Corollary \ref{cor:pmod}) estimates for the KPZ fixed point. In Section \ref{sec5} we prove our long time and short time landscape replacement theorems that provide independent proxies for the KPZ fixed point (see the explanation in Section \ref{sec1.1}). Finally in Section \ref{sec6} we prove our main results: Theorem \ref{thm:longLILa}, Theorem \ref{thm:longLILb}, and Theorem \ref{thm:main}. Proof of two technical lemmas are deferred to Appendix \ref{app}.

	\subsection*{Acknowledgements} The authors thank Ivan Corwin for helpful comments on an earlier draft of the paper. The authors acknowledge support from NSF DMS-1928930 during their participation in the program ``Universality and Integrability in Random Matrix Theory and Interacting Particle Systems'' hosted by the Mathematical Sciences Research Institute in Berkeley, California in fall 2021.  SD’s research was also partially supported by Ivan Corwin's NSF grant 	DMS-1811143 and the Fernholz Foundation’s ``Summer Minerva Fellows'' program.
	
	\section{Basic framework and tools} \label{sec2}
	
	In this section, we collect several existing properties for the directed landscape and the KPZ fixed point. These two objects admit several useful symmetries and distributional relations to well known objects. In Section \ref{sec2.1} we collect all such facts that will come handy in our later analysis. In Section \ref{sec:dl-decay} we then recall probabilistic estimates for the one point tail of the KPZ fixed point and provide parabolic decay estimates and modulus of continuity estimates for the directed landscape. Finally in Section \ref{sec2.3} we collect one point tail information for the KPZ fixed point.
	
	\medskip
	
	Throughout this paper, we reserve the letters $\cl(\cdot,\cdot;\cdot,\cdot)$, $\calA(\cdot)$, and $\B(\cdot)$ to denote the directed landscape, stationary $\operatorname{Airy}_2$ process, and a two sided Brownian motion with diffusion coefficient $2$ respectively. We use $\Con = \Con(a, b, c, \ldots) > 0$ to denote a generic deterministic positive finite constant that may change from line to line, but is dependent on the designated variables $a, b, c, \ldots$.

	\subsection{Symmetries in the directed landscape and the KPZ fixed point} \label{sec2.1}
	
	The directed landscape as defined in 
	Definition \ref{def:dl} satisfies several symmetries. We state these results in terms of the stationary version of directed Landscape defined as
	\begin{align}\label{def:ck}
		\ck(x,s;y,t):=\cl(x,s;y,t)+\frac{(x-y)^2}{t-s}.
	\end{align}

	\begin{lemma}[Lemma 10.2 in \cite{DOV18}] \label{lem:sym} Fix any $z,r\in \R$ and $c,q>0$. We have the following equality in distributions as functions in $C(\R_{\uparrow}^4, \R)$.
		\begin{enumerate}[label=(\alph*), leftmargin=15pt]
			\item $\ck(x,s;y,t)\stackrel{d}{=} \ck(x,r+s;y,r+t)$.
			\item \label{2.1b} $\ck(x,s;y,t)\stackrel{d}{=} \ck(x+z,r;y+z,t)$.
			\item \label{2.1c} $\ck(x,s;y,t)\stackrel{d}{=} \ck(x+cs,s;y+ct,t)$.
			\item \label{2.1d} $\ck(x,s;y,t)\stackrel{d}{=} q\ck(q^{-2}x,q^{-3}s;q^{-2}y,q^{-3}t)$.
		\end{enumerate}
	\end{lemma}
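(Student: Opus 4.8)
The plan is to deduce each of the four identities from the corresponding symmetry of the \emph{non-stationary} directed landscape $\cl$ recorded in \cite{DOV18} (time-translation invariance, diagonal space-translation invariance, the $1{:}2{:}3$ KPZ scaling, and skew/shear invariance), combined with the deterministic relation \eqref{def:ck}. The key observation is that $\ck$ is a fixed measurable functional of $\cl$, obtained by adding the deterministic parabola $(x-y)^2/(t-s)$; since all of $\cl$'s symmetries hold at the level of $C(\R^4_\uparrow,\R)$-valued random variables, one simply pushes them forward through this functional, keeping careful track of how the added parabola transforms under the relevant change of coordinates. In particular the resulting identities for $\ck$ will automatically be equalities in $C(\R^4_\uparrow,\R)$, as claimed.

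For (a), under $(s,t)\mapsto(r+s,r+t)$ the quantity $t-s$ is unchanged, so the added parabola is literally the same function of the shifted coordinates; hence the time-stationarity of $\cl$ transfers verbatim to $\ck$. For (b), under $(x,y)\mapsto(x+z,y+z)$ the difference $x-y$ is unchanged, so the parabola is again invariant, and the diagonal space-translation invariance of $\cl$ gives the claim. Both of these are essentially bookkeeping-free.

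For (c) the parabola genuinely changes: expanding,
\begin{align*}
	\frac{\big((x+cs)-(y+ct)\big)^2}{t-s}=\frac{(x-y)^2}{t-s}-2c(x-y)+c^2(t-s),
\end{align*}
so that $\ck(x+cs,s;y+ct,t)=\cl(x+cs,s;y+ct,t)+\tfrac{(x-y)^2}{t-s}-2c(x-y)+c^2(t-s)$, and identity (c) is equivalent to the assertion that $\cl(x+cs,s;y+ct,t)$ has the same law as $\cl(x,s;y,t)+2c(x-y)-c^2(t-s)$ --- precisely the skew (shear) symmetry of the directed landscape. This is the step to execute most carefully: the shear symmetry is the subtlest of $\cl$'s invariances, and the sign bookkeeping in the expansion above must line up exactly with it. For (d), a direct computation using the scaling $\cl(x,s;y,t)\stackrel{d}{=}q\,\cl(q^{-2}x,q^{-3}s;q^{-2}y,q^{-3}t)$ shows
\begin{align*}
	q\cdot\frac{(q^{-2}x-q^{-2}y)^2}{q^{-3}t-q^{-3}s}=q\cdot q^{-1}\cdot\frac{(x-y)^2}{t-s}=\frac{(x-y)^2}{t-s},
\end{align*}
so $q\,\ck(q^{-2}x,q^{-3}s;q^{-2}y,q^{-3}t)=q\,\cl(q^{-2}x,q^{-3}s;q^{-2}y,q^{-3}t)+\tfrac{(x-y)^2}{t-s}\stackrel{d}{=}\ck(x,s;y,t)$.

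The only genuine obstacle is part (c): one must invoke the correct form of the shear symmetry of $\cl$ and match signs precisely; parts (a), (b), (d) are immediate once the transformation of the parabola is computed. No new probabilistic input beyond the cited symmetries of $\cl$ is needed, since $\ck$ is a continuous deterministic functional of $\cl$ and each symmetry of $\cl$ is an equality of random continuous functions on $\R^4_\uparrow$.
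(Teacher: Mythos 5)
The paper does not supply a proof of this lemma at all: it is quoted verbatim (modulo notation, $\ck = \calR$) from Lemma~10.2 of \cite{DOV18}, and DOV18 states those four invariances precisely for the \emph{stationary} landscape $\ck(x,s;y,t)=\cl(x,s;y,t)+\tfrac{(x-y)^2}{t-s}$, not for $\cl$. Your algebra is correct throughout, and for parts (a), (b), (d) the parabola is literally unchanged under the transformation, so the $\cl$-level and $\ck$-level statements are the same thing and the transfer is genuinely bookkeeping-free, as you say.

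The issue is part (c). You reduce the shear invariance of $\ck$ to the assertion that $\cl(x+cs,s;y+ct,t)\stackrel{d}{=}\cl(x,s;y,t)+2c(x-y)-c^2(t-s)$, and then declare this to be ``precisely the skew (shear) symmetry of the directed landscape.'' But that is not an independently recorded fact about $\cl$ in \cite{DOV18}: what DOV18 records \emph{is} the shear invariance of $\ck$, i.e.\ exactly the identity you are trying to prove, and your claimed $\cl$-level shear symmetry is obtained from it by running your own expansion of the parabola in reverse. So the reduction for (c) is circular rather than a derivation from a more elementary input; you have shown the two formulations are equivalent, not proved either of them. The correct move here is simply to cite Lemma~10.2 of \cite{DOV18}, as the paper does; if one insists on a genuine proof, one would trace the skew invariance back to the Airy-sheet/Brownian-LPP construction in DOV18 rather than asserting an uncited ``shear symmetry of $\cl$.''
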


	We reserve the notation $\ck(\cdot,\cdot;\cdot,\cdot)$ for the stationary directed landscape defined in \eqref{def:ck} for the rest of the article. The marginals of $\ck$ are given by the following lemma.
	
	\begin{lemma} \label{lem:2.2}
		\begin{enumerate}[label=(\alph*), leftmargin=15pt]
			\item For each fixed $y$, we have $\ck(\cdot,0;y,1)\stackrel{d}{=} \calA(\cdot-y)$, where $\calA(\cdot)$ is the stationary $\operatorname{Airy}_2$ process constructed in \cite{ps02}.
			\item For each fixed $x\in \R$ we have $\calA(x) \stackrel{d}{=} \tw$ where $\tw$ denotes the Tracy-Widom GUE distribution \cite{tw}.
		\end{enumerate}
	\end{lemma}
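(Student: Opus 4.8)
The plan is to read both statements off from the distributional description of the parabolic Airy sheet recorded in Definition~\ref{def:dl}, together with the symmetries of the stationary directed landscape collected in Lemma~\ref{lem:sym}; no genuine estimate is required.

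\emph{Part (a).} First I would reduce to the case $y=0$. By the spatial translation invariance of Lemma~\ref{lem:sym} (translating both endpoints by $-y$) one has $\ck(\cdot,0;y,1)\stackrel{d}{=}\ck(\cdot-y,0;0,1)$ as an identity of random continuous functions of the first coordinate, so it suffices to show $\ck(\cdot,0;0,1)\stackrel{d}{=}\calA(\cdot)$. Next, using the time-reversal symmetry of the directed landscape from \cite{DOV18} together with time-translation invariance, I would flip the two space-time endpoints to get $\cl(\cdot,0;0,1)\stackrel{d}{=}\cl(0,0;\cdot,1)$ as processes, hence, recalling \eqref{def:ck}, $\ck(x,0;0,1)=\cl(x,0;0,1)+x^2\stackrel{d}{=}\cl(0,0;x,1)+x^2=\ck(0,0;x,1)$. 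Finally I would unwind $\ck(0,0;\cdot,1)$: by the scaling relation in Definition~\ref{def:dl} (with $t=0$, $s=1$) we have $\cl(0,0;\cdot,1)\stackrel{d}{=}\mathcal{S}(0,\cdot)$ for a parabolic Airy sheet $\mathcal{S}$, and the marginal identity $\mathcal{S}(0,x)\stackrel{d}{=}\calA(x)-x^2$ recorded in Definition~\ref{def:dl} gives $\ck(0,0;x,1)\stackrel{d}{=}\calA(x)-x^2+x^2=\calA(x)$, completing the chain.

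\emph{Part (b).} This is a direct quotation from the literature: the $\operatorname{Airy}_2$ process $\calA$ constructed in \cite{ps02} is spatially stationary, so $\calA(x)\stackrel{d}{=}\calA(0)$ for every $x\in\R$, and $\calA(0)$ has the Tracy--Widom GUE law by \cite{ps02,tw}.

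I do not foresee a real obstacle here; the only point warranting care is that the time-reversal (equivalently, the flip symmetry $\mathcal{S}(x,y)\stackrel{d}{=}\mathcal{S}(y,x)$ of the parabolic Airy sheet) and the marginal identity be applied as equalities of processes in the spatial variable rather than as mere one-point equalities, so that the assertion $\ck(\cdot,0;y,1)\stackrel{d}{=}\calA(\cdot-y)$ holds as an identity of random continuous functions -- which is automatic, since the relations being invoked are themselves functional identities.
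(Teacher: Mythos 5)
Your proof is correct, but it takes a genuinely different route from the paper, which for part (a) simply cites Proposition~14.1 of \cite{dv22} directly and for part (b) cites \cite{bary,gary}. You instead reconstruct part (a) from the distributional description in Definition~\ref{def:dl} together with the flip symmetry $\mathcal{S}(x,y)\stackrel{d}{=}\mathcal{S}(y,x)$ of the parabolic Airy sheet (equivalently, time-reversal of $\cl$). This is a legitimate and more self-contained derivation; what the paper's citation buys is brevity and not having to touch the Airy-sheet machinery at all.

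Two small bookkeeping remarks if you were to fold your argument into this paper. First, the symmetry you invoke is not among the four listed in Lemma~\ref{lem:sym} (translation in time, translation in space, shear, scaling); you would need to quote the time-reversal/flip symmetry directly from \cite{DOV18} (it is part of their Lemma~10.2), since the present paper deliberately omits it from its recollection. Second, a sign: time-reversal followed by time-translation yields $\cl(x,0;0,1)\stackrel{d}{=}\cl(0,0;-x,1)$, not $\cl(0,0;x,1)$; to remove the minus sign you also need the spatial reflection symmetry $\cl(x,s;y,t)\stackrel{d}{=}\cl(-x,s;-y,t)$, or, equivalently, the reflection invariance $\calA(-\cdot)\stackrel{d}{=}\calA(\cdot)$ of the stationary $\operatorname{Airy}_2$ process at the very end. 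Either patch is immediate and does not affect the conclusion, but the chain as written skips this step. Part (b) is exactly as in the paper (stationarity of $\calA$ plus the one-point Tracy--Widom GUE identification), differing only in which references are cited for the same facts.
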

	\begin{proof}
		The first fact follows from the definition of the directed landscape (Definition \ref{def:dl}) and Proposition 14.1 from \cite{dv22}. The second fact is due to \cite{bary,gary}.
	\end{proof}
	
	Finally we discuss some useful identities for the KPZ fixed point started from narrow wedge or Brownian initial data.
	
	\begin{lemma}\label{lem:2.3} Consider the KPZ fixed point $\h$ started from an initial data $\h_0$. We have the following identities for the KPZ fixed point for special cases of $\h_0$.
		\begin{enumerate}[label=(\alph*), leftmargin=15pt]
			\item When $\h_0(x)=-\infty\cdot\ind_{x\neq 0}$, we have $\h_t(x)=\cl(0,0;x,t)$.
			\item \label{2.3b} When $\h_0(x)=\B(x)$, for each $t>0$ we have
			$t^{-1/3}\h_t(0) \stackrel{d}{=} \h_1(0) \stackrel{d}{=} \BR$
			where $\BR$ is a Baik Rains distributed random variable \cite{br} defined as $\BR:=\sup_{x\in \R} [\B(x)+\calA(x)-x^2].$
			\item \label{2.3c} Fix any $\mu\in \R$. When $\h_0(x)=\B(x)+\mu x$, as processes in $x$ we have $\h_s(x)-\h_s(0) \stackrel{d}{=} \B(x)+\mu x$.
		\end{enumerate}
	\end{lemma}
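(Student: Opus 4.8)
The plan is to treat the three identities separately, in increasing order of depth. Part (a) follows in one line from the variational formula \eqref{def:kpzfp}: when $\h_0=-\infty\cdot\ind_{x\neq 0}$, every $z\neq 0$ contributes $-\infty$ to $\sup_{z\in\R}\{\h_0(z)+\cl(z,0;x,t)\}$, so the supremum is attained at $z=0$ and equals $\cl(0,0;x,t)$.

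For part \ref{2.3b}, the strategy is to combine the $1:2:3$ scale invariance of the directed landscape with the $\operatorname{Airy}_2$ marginal of $\ck$, and then exploit the independence of $\B$ and $\cl$ together with Brownian scaling. First, by Lemma~\ref{lem:sym}\ref{2.1d} with $q=t^{1/3}$ one has, as a process in $x$, $\ck(x,0;0,t)\stackrel{d}{=} t^{1/3}\ck(t^{-2/3}x,0;0,1)$, which by Lemma~\ref{lem:2.2}(a) (with $y=0$) is distributed as $t^{1/3}\calA(t^{-2/3}x)$. Subtracting the parabola according to the definition \eqref{def:ck} of $\ck$ turns this into $\cl(x,0;0,t)\stackrel{d}{=} t^{1/3}\bigl(\calA(t^{-2/3}x)-(t^{-2/3}x)^2\bigr)$ as a process in $x$. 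Separately, since $\B$ has diffusion coefficient $2$, Brownian scaling gives $\B(x)\stackrel{d}{=} t^{1/3}\B(t^{-2/3}x)$, and $\B$ is independent of $\cl$, so the two scalings may be applied jointly to an independent pair $(\B,\calA)$. Substituting $x=t^{2/3}u$ inside the supremum (a bijection of $\R$) then gives
\[
\h_t(0)=\sup_{x\in\R}\bigl\{\B(x)+\cl(x,0;0,t)\bigr\}\stackrel{d}{=} t^{1/3}\sup_{u\in\R}\bigl\{\B(u)+\calA(u)-u^2\bigr\}=t^{1/3}\,\BR,
\]
where $\BR$ is recognized from its variational definition. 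Dividing by $t^{1/3}$ gives $t^{-1/3}\h_t(0)\stackrel{d}{=}\BR$, and the case $t=1$ gives $\h_1(0)\stackrel{d}{=}\BR$, which is the full chain in \ref{2.3b}. The only points that need a word of justification are that the supremum is a.s.\ finite and attained (immediate, since $\calA(u)-u^2\to-\infty$ while $\B(u)$ grows sublinearly), so that $\sup_u\{f+g\}$ is an a.s.\ continuous functional of the two trajectories, and that the change of variables is harmless because $u\mapsto t^{2/3}u$ is onto $\R$.

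Part \ref{2.3c} is the statement that a two-sided Brownian motion with an arbitrary linear drift is invariant, in the sense of increments, under the KPZ fixed point; I would obtain it by quoting Theorem~4.5 of \cite{MQR16}. Alternatively it can be reduced to the $\mu=0$ stationarity $\h_s(x)-\h_s(0)\stackrel{d}{=}\B(x)$ recalled in the introduction, using the shear symmetry Lemma~\ref{lem:sym}\ref{2.1c}: writing $\h_s(x)=\sup_z\{\B(z)+\mu z+\cl(z,0;x,s)\}$, the linear tilt $\mu z$ can be absorbed by shearing $\cl$, which transforms $\cl(z,0;x,s)$ into $\cl(z,0;x+\mu s/2,s)$ plus a correction of the form $-\mu(z-x)+\mu^2 s/4$, after which completing the square and relabelling the supremum variable leaves the increment process equal in law to $\B(x)+\mu x$; this reduction is elementary but slightly tedious bookkeeping. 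Among the three parts this invariance is the only genuinely non-elementary input, and hence the main obstacle if one insists on a self-contained argument; for the purposes of the paper it is used as a black box.
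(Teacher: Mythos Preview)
Your proposal is correct and follows essentially the same approach as the paper: part (a) is immediate from the variational formula, part (b) is obtained by combining the $1{:}2{:}3$ scaling of Lemma~\ref{lem:sym}\ref{2.1d} with Brownian scaling (the paper does not spell out the intermediate use of Lemma~\ref{lem:2.2}(a), but it is implicit), and part (c) is quoted from Theorem~4.5 of \cite{MQR16}. Your write-up is simply more detailed than the paper's two-line proof; the alternative shear reduction you sketch for (c) is not in the paper's proof of this lemma, though the same computation appears later in the proof of Lemma~\ref{lem:argmaxprop}\ref{3.1a}.
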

	\begin{proof}
		The first one is obvious from the definition of the KPZ fixed point (see \eqref{def:kpzfp}). The second fact follows from Lemma \ref{lem:sym} \ref{2.1d} and scale invariance of $\B$ (i.e., $r^{-1}\B(r^2x)\stackrel{d}{=} \B(x)$). The third fact is well known (see Theorem 4.5 in \cite{MQR16}).
	\end{proof}
	\subsection{Probabilistic estimates for the directed landscape}  \label{sec:dl-decay} In this section we prove a couple of probabilistic estimates for the directed landscape: Lemma \ref{lem:scont} and Lemma \ref{lem:bbv}. The first one collects several tail estimates for the spatial process of the directed landscape, whereas the second one establishes parabolic decay estimates for the directed landscape.

	\begin{lemma}[Spatial tail estimate]\label{lem:scont}  There exists a constant $\Con>0$ such that for any $k_1,k_2\in \R, s>0$, and $R\ge 1$ we have
		\begin{align}\label{eq:ProbBd1}
			& \Pr\Big(\sup_{y,z\in \mathfrak{I}_{k_1,k_2}} |\ck(y,0;z,1)-\ck(k_1,0;k_2,1)| \ge s\Big)  \le \se{2}, \\ 
			& \Pr\Big(\sup_{y,z\in \mathfrak{I}_{k_1,k_2}} |\ck(y,0;z,1)| \ge s\Big)  \le \se{3/2} \label{eq:ProbBd2}
			\\ 
			& \Pr\Big(\sup_{|y|\le R,|x|\le 1, |z|\le 1} \frac{|\ck(y,0;x,1)-\ck(y,0;z,1)|}{\sqrt{|x-z|\log \frac{4}{|x-z|}}} \ge s\Big)  \le R \cdot \se{3/2}, \label{eq:ProbBd3}
		\end{align}
		where $\mathfrak{I}_{k_1,k_2} = [k_1, k_1+1]\times [k_2,k_2+1]$. 
	\end{lemma}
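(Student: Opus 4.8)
\textbf{Proof plan for Lemma \ref{lem:scont}.}

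The plan is to obtain all three bounds from two known building blocks: first, the sharp one-point upper- and lower-tail estimates for the parabolic Airy sheet $\mathcal{S}(x,y)$ (equivalently for $\ck(y,0;z,1)$), which have Tracy--Widom GUE--type tails $\exp(-\tfrac43 s^{3/2})$ on the upper side and $\exp(-\tfrac1{12}s^3)$ on the lower side; and second, the local modulus-of-continuity estimates for the Airy sheet established in \cite{DOV18} (their Proposition 10.5 and the Airy sheet construction), which control increments $\mathcal{S}(x,y)-\mathcal{S}(x',y')$ on unit-scale boxes by $\sqrt{|x-x'|+|y-y'|}$ up to logarithmic corrections and Gaussian-type tails. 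Throughout I would reduce to the standardized object $\ck(\cdot,0;\cdot,1)$ using the scaling and shift symmetries in Lemma \ref{lem:sym}, so that it suffices to prove everything for boxes anchored at the origin; the general $k_1,k_2$ case follows by translation invariance (Lemma \ref{lem:sym}\ref{2.1b}), which is why the constants in \eqref{eq:ProbBd1}--\eqref{eq:ProbBd2} do not depend on $k_1,k_2$.

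For \eqref{eq:ProbBd1}, I would use that the increment of $\ck$ over a unit box behaves diffusively: writing $\ck(y,0;z,1)-\ck(k_1,0;k_2,1)$ as a telescoping sum of a $y$-increment at fixed $z$ and a $z$-increment at fixed $y$, and invoking the Airy-sheet modulus-of-continuity bound, the supremum over the unit box $\mathfrak{I}_{k_1,k_2}$ is stochastically dominated (after a chaining/Borell--TIS type argument on the Gaussian-like field) by something with a stretched-exponential tail of order $\exp(-c s^2)$ — the exponent $2$ here reflecting the Gaussian (Brownian-like) local behavior of the sheet rather than its GUE-type marginal. For \eqref{eq:ProbBd2}, I would split $\sup_{y,z}|\ck(y,0;z,1)| \le |\ck(k_1,0;k_2,1)| + \sup_{y,z}|\ck(y,0;z,1)-\ck(k_1,0;k_2,1)|$; the first term has the $\exp(-\tfrac43 s^{3/2})$ upper tail and $\exp(-\tfrac1{12}s^3)$ lower tail from the one-point estimate (both dominated by $\se{3/2}$ for large $s$), and the second term is controlled by \eqref{eq:ProbBd1}, whose $\exp(-cs^2)$ tail is also dominated by $\se{3/2}$; a union bound gives the claim.

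For \eqref{eq:ProbBd3}, which is a Lévy-type modulus of continuity in the second coordinate uniformly over $|y|\le R$, the strategy is a dyadic chaining argument: cover $|y|\le R$ by $O(R)$ unit intervals, and on each unit interval use the Airy-sheet two-parameter modulus-of-continuity estimate to control $|\ck(y,0;x,1)-\ck(y,0;z,1)|$ for $|x|,|z|\le 1$ by $\sqrt{|x-z|\log(4/|x-z|)}$ times a random constant with a $\se{3/2}$-type tail; the factor $R$ on the right-hand side is exactly the price of the union bound over the $O(R)$ pieces. The normalization by $\sqrt{|x-z|\log(4/|x-z|)}$ rather than $\sqrt{|x-z|}$ is the standard Ciesielski--Taylor / Lévy modulus coming from simultaneously controlling all dyadic scales; this would be handled by summing the tail bounds over dyadic levels $|x-z|\asymp 2^{-j}$, where the $\sqrt{j}$ growth of the chaining correction is absorbed by choosing the constant in the normalization large enough.

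The main obstacle is getting clean quantitative control of the Airy-sheet modulus of continuity with the right tail exponents — the results in \cite{DOV18} are stated with Gaussian-type tails for increments but one must be careful that, after taking a supremum over a box or over dyadic scales, the resulting tail is genuinely of order $\se{3/2}$ (or better) rather than something weaker, and that the constant is uniform in the anchor point and in $R$. In practice I expect this to reduce to invoking the precise increment bounds for the Airy sheet from \cite{DOV18} (and possibly the two-point tail estimates for the directed landscape), combined with an off-the-shelf chaining lemma; the bookkeeping of exponents ($2$ for the box modulus in \eqref{eq:ProbBd1}--\eqref{eq:ProbBd2}, and the $\se{3/2}$ cap everywhere else) is the delicate part.
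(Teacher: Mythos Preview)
Your plan is correct and matches the paper's proof almost exactly: reduce to $k_1=k_2=0$ by translation invariance, use the two-point increment tails from \cite{DOV18} (the paper cites Lemma~10.4 there) together with a chaining lemma (the paper uses Lemma~3.3 of \cite{dv21}) to get \eqref{eq:ProbBd1}, then combine with the one-point $\tw$ tail to get \eqref{eq:ProbBd2}, and apply the same chaining lemma again with a union bound over $O(R)$ unit $y$-intervals for \eqref{eq:ProbBd3}. One small caution: Borell--TIS is a Gaussian tool and the Airy sheet is not Gaussian, so the concentration step really is pure chaining from the increment tails (which already have $\exp(-cs^2)$ decay), not a Gaussian isoperimetric argument.
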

	\begin{proof}
		By Lemma \ref{lem:sym} \ref{2.1b} and \ref{2.1c} it suffices to prove the lemma for $k_1=k_2=0$.     
		By Lemma 10.4 of \cite{DOV18}, there exists $\Con>0$ such that 
		\begin{equation}\label{eq:difest}
			\begin{aligned}
				& \Pr\left(|\ck(y_1,0;0,1)-\ck(y_2,0;0,1)| \ge s\sqrt{|y_1-y_2|}\right) \le \se{2}, \\ & \Pr\left(|\ck(0,0;z_1,1)-\ck(0,0;z_2,1)| \ge s\sqrt{|z_1-z_2|}\right) \le \Con\exp(-\tfrac1{\Con}s^2),
			\end{aligned}
		\end{equation}
		for all $y_1,y_2,z_1,z_2\in \R$. Applying Lemma 3.3 of \cite{dv21}, we have
		\begin{align*}
			\Pr\Big(\sup_{|y|,|z|\le 1} \frac{|\ck(y,0;z,1)-\ck(0,0;0,1)|}{|y|^{\frac12}\log^{\frac12}\big(\frac{2}{|y|}\big)+|z|^{\frac12}\log^{\frac12}\big(\frac{2}{|z|}\big)}\ge s\Big)\le \Con\exp(-\tfrac1{\Con}s^2).
		\end{align*}
		From the above inequality, \eqref{eq:ProbBd1} follows by noting that there exists a absolute constant $\Con>0$ such that 
		\begin{align*}
			\sup_{y,z\in \mathfrak{I}_{0,0}} |\ck(y,0;z,1)-\ck(0,0;0,1)|\leq 	\Con\cdot\sup_{|y|,|z|\le 1} \frac{|\ck(y,0;z,1)-\ck(0,0;0,1)|}{|y|^{\frac12}\log^{\frac12}\big(\frac{2}{|y|}\big)+|z|^{\frac12}\log^{\frac12}\big(\frac{2}{|z|}\big)}. 
		\end{align*}
		This completes the proof of \eqref{eq:ProbBd1}. For \eqref{eq:ProbBd2}, we utilize the fact that $\ck(0,0;0,1)\stackrel{d}{=} \tw$ from Lemma \ref{lem:2.2}, and $\Pr(|\tw|\ge s)\le \se{3/2}$ from \cite{ramirez2011beta}. Hence in view of \eqref{eq:ProbBd1}, by union bound we have
		\begin{align*}
			\Pr\Big(\sup_{y,z\in \mathfrak{I}_{0,0}} |\ck(y,0;z,1)| \ge s\Big) &  \le \Pr\Big(\sup_{y,z\in \mathfrak{I}_{0,0}} |\ck(y,0;z,1)-\ck(0,0;0,1)| \ge \tfrac{s}{2}\Big)+ \Pr(|\ck(0,0;0,1)|\ge \tfrac{s}2) \\ & \le  \se{3/2}.
		\end{align*}
		This establishes \eqref{eq:ProbBd2}. Finally \eqref{eq:ProbBd3} follows from another application of Lemma 3.3 in \cite{dv21} along with \eqref{eq:difest}.
	\end{proof}
	\begin{lemma}[Parabolic Decay of $\cl$]\label{lem:bbv}  	Fix $\delta>0$. There exist a constant $\Con=\Con(\delta) >0$ such that for all $r>0, v>0,$ and $s>0$ we have
		\begin{align*}
			\Pr\Big(\sup_{y\in \R, |z|\le r} \big[ \cl(y,0;z,v)+\tfrac{(y-z)^2}{v(1+\delta)}\big] \ge s\Big) \le \Con (rv^{-\frac23}+1)\exp(-\tfrac{1}{\Con}s^{3/2}v^{-\frac12}).
		\end{align*}
	\end{lemma}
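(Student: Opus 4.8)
The plan is to first reduce to the case $v=1$ via the scaling symmetry of the stationary landscape, and then bound the supremum by a union bound over unit boxes, invoking the one-box tail estimate \eqref{eq:ProbBd2}; the role of the strict inequality $\tfrac{1}{1+\delta}<1$ hidden in the statement is that the leftover parabolic penalty is exactly what makes both the $y\in\R$ supremum finite and the ensuing box sum convergent. Precisely, set $c_\delta:=\tfrac{\delta}{1+\delta}>0$ and use $\ck(y,0;z,v)=\cl(y,0;z,v)+\tfrac{(y-z)^2}{v}$ to rewrite
\[
  \cl(y,0;z,v)+\frac{(y-z)^2}{v(1+\delta)}=\ck(y,0;z,v)-c_\delta\,\frac{(y-z)^2}{v}.
\]
By Lemma~\ref{lem:sym}\ref{2.1d} with $q=v^{1/3}$ we have $\ck(y,0;z,v)\stackrel{d}{=}v^{1/3}\ck(v^{-2/3}y,0;v^{-2/3}z,1)$ as processes in $(y,z)$, and the change of variables $y\mapsto v^{2/3}y$, $z\mapsto v^{2/3}z$ turns the penalty into $c_\delta v^{1/3}(y-z)^2$. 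Hence, writing $R:=rv^{-2/3}$ and $\tilde s:=sv^{-1/3}$ (so that $s^{3/2}v^{-1/2}=\tilde s^{3/2}$), it suffices to prove
\[
  \Pr\Big(\sup_{y\in\R,\ |z|\le R}\big[\ck(y,0;z,1)-c_\delta(y-z)^2\big]\ge\tilde s\Big)\le\Con(R+1)\exp\big(-\tfrac1\Con\tilde s^{3/2}\big)\qquad\text{for all }R,\tilde s>0.
\]

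For this I would cover the strip $\R\times[-R,R]$ by the unit boxes $\mathfrak{I}_{k_1,k_2}=[k_1,k_1+1]\times[k_2,k_2+1]$ with $k_1\in\Z$ and $k_2$ running over the $O(R+1)$ integers whose box meets the strip. On $\mathfrak{I}_{k_1,k_2}$ one has $(y-z)^2\ge\tfrac12(k_1-k_2)^2-C_0$ for an absolute constant $C_0$, so the event $\{\sup_{\mathfrak{I}_{k_1,k_2}}[\ck-c_\delta(y-z)^2]\ge\tilde s\}$ forces $\sup_{\mathfrak{I}_{k_1,k_2}}|\ck(y,0;z,1)|\ge\tilde s+\tfrac{c_\delta}{2}(k_1-k_2)^2-c_\delta C_0$, whose probability is at most $\Con\exp\!\big(-\tfrac1\Con(\tilde s+\tfrac{c_\delta}{2}(k_1-k_2)^2-c_\delta C_0)^{3/2}\big)$ by \eqref{eq:ProbBd2}, once $\tilde s\ge 2c_\delta C_0$ (for smaller $\tilde s$ the target bound already exceeds $1$ after enlarging $\Con$, hence is trivial). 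Using $(a+b)^{3/2}\ge a^{3/2}+b^{3/2}$ for $a,b\ge0$, this is bounded by $\Con\exp(-\tfrac1\Con(\tilde s/2)^{3/2})\exp(-\tfrac1\Con(c_\delta/2)^{3/2}|k_1-k_2|^3)$; summing over $k_1\in\Z$ gives a finite factor $\sum_{m\in\Z}\exp(-\tfrac1\Con(c_\delta/2)^{3/2}|m|^3)$, and there are $O(R+1)$ choices of $k_2$, so a union bound over all boxes yields the displayed estimate after relabeling constants. Undoing the Step-1 rescaling then completes the proof.

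The only genuinely delicate point is the convergence of the box sum in the unbounded $y$-direction: it works because the quadratic penalty enters the $3/2$-power tail of \eqref{eq:ProbBd2} as a cubic term $|k_1-k_2|^3$, which is summable and grows fast enough that only $O(\sqrt{\tilde s})$ boxes carry the leading $\tilde s^{3/2}$ contribution, so that no spurious polynomial-in-$\tilde s$ prefactor is produced. The dependence of $\Con$ on $\delta$ is transparent from this sum, which is of size $\asymp c_\delta^{-1/2}$ as $\delta\downarrow0$, consistent with the fact that the penalty — and with it the finiteness of the supremum over $y\in\R$ — degenerates in that limit.
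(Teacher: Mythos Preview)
Your proof is correct and follows essentially the same approach as the paper's: reduce to $v=1$ by the $1:2:3$ scaling of $\ck$, rewrite the quantity as $\ck-c_\delta(y-z)^2$, tile the strip by unit boxes, invoke the one-box bound \eqref{eq:ProbBd2} with the residual quadratic penalty boosting the threshold, and then sum over $k_1\in\Z$ and the $O(R+1)$ values of $k_2$. The only cosmetic differences are that the paper records the box penalty as $m_{k_1,k_2}^2=\min_{\mathfrak I_{k_1,k_2}}(y-z)^2$ rather than your coarser $\tfrac12(k_1-k_2)^2-C_0$, and it does not spell out the $(a+b)^{3/2}\ge a^{3/2}+b^{3/2}$ step or the $\delta\downarrow0$ asymptotics; otherwise the arguments are the same.
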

	\begin{proof} Recall that the KPZ fixed point satisfies the following scale invariance property: for any fixed $v>0$,
		\begin{align*}
			\mathcal{L}(y,0;z,v)+ \frac{(y-z)^2}{v(1+\delta)} \stackrel{d}{=} v^{\frac{1}{3}}\Big[\mathcal{L}(v^{-\frac{2}{3}}y,0;v^{-\frac{2}{3}}z,1)+ \frac{\big(v^{-\frac{2}{3}}(y-z)\big)^2}{(1+\delta)}\Big].
		\end{align*}
		By this scale invariance property, we have
		\begin{align*}
			\Pr\Big(\sup_{y\in \R, |z|\le r} \big[ \cl(y,0;z,v)+\tfrac{(y-z)^2}{v(1+\delta)}\big] \ge s\Big) = \Pr\Big(\sup_{y\in \R, |z|\le rv^{-\frac23}} \big[ \cl(y,0;z,1)+\tfrac{(y-z)^2}{1+\delta}\big] \ge sv^{-\frac13}\Big).
		\end{align*}
		Thus it suffices to prove Lemma \ref{lem:bbv} for $v=1$. To this end, for any $k_1,k_2\in \Z$ we define,
		\begin{align*}
			\mathsf{A}_{k_1,k_2} &:= \Big\{\sup_{y\in [k_1,k_1+1], z\in [k_2,k_2+1]} \big[ \ck(y,0;z,1)  -\tfrac{\delta(y-z)^2}{1+\delta}\big] \ge s\Big\},\\
			\mathsf{B}_{k_1,k_2} &:= \Big\{ \sup_{\substack{y\in [k_1,k_1+1]\\ z\in [k_2,k_2+1]}} |\ck(y,0;z,1)| \ge s+\tfrac{\delta m_{k_1,k_2}^2}{1+\delta}\Big\}, \quad \mbox{where }  m_{k_1,k_2}^2 := \min_{\substack{y\in [k_1,k_1+1] \\ z\in [k_2,k_2+1]}}(y-z)^2.
		\end{align*}
		Observe that by union bound we have
		\begin{align}\label{eq:BreakIt}
			\Pr\Big(\sup_{y\in \R, |z|\le r} \big[ \cl(y,0;z,1)+\tfrac{(y-z)^2}{1+\delta}\big] \ge s\Big) \leq \sum_{k_1\in \mathbb{Z}}\sum_{k_2 = - \lceil r \rceil}^{\lceil r \rceil}\mathbb{P}(\mathsf{A}_{k_1,k_2}) \leq \sum_{k_1\in \mathbb{Z}}\sum_{k_2 = - \lceil r \rceil}^{\lceil r \rceil}\mathbb{P}(\mathsf{B}_{k_1,k_2}).  
		\end{align}	 	
		From Lemma \ref{lem:scont} (\eqref{eq:ProbBd2} precisely) we  see that 
		\begin{align*}
			\mathbb{P}(\mathsf{B}_{k_1,k_2})\leq \Con\exp(-\tfrac1{\Con}(s+\tfrac{\delta}{1+\delta}m_{k_1,k_2}^2)^{3/2}). 
		\end{align*}
		We substitute this into the right hand side of \eqref{eq:BreakIt}. Summing over $k_1\in \Z$ and then $k_2 \in [-r-1,r+1]\cap \Z$, we get the desired result.
	\end{proof}
	
	\subsection{Tail estimates for the KPZ fixed point} \label{sec2.3} In this section we collect sharp one point tail estimates for the KPZ fixed point that is indispensable for proving our main results. 
	
	\smallskip
	
	To systematically study different initial data considered in our main theorems, we now introduce two classes for initial data, namely \st \ and \lt, suitable for short-time LIL and long-time LIL respectively. 
	
	\begin{definition}[\st] \label{def:st} Fix any $\stp =(\af,\dt,\bet)\in  \R\times \R_{>0}^2$. ST-class($\mathfrak{p}$) is a class of initial data for KPZ fixed point which is the union of the following sub-classes of initial data.  
		\begin{itemize}
			\item $\h_0(x)=-\infty\cdot\ind_{x\neq 0},$
			\item $\h_0$ is deterministic functional initial data with $\h_0(x) \le \af+(1-\dt)x^2$ for some $\af\in\R$ and $\dt>0$ and $|\h_0(0)|\le \bet$,
			\item $\h_0(x)=\B(x),$ a two sided Brownian Motion with diffusion coefficient $2$.
		\end{itemize}
	\end{definition}
	\begin{definition}[\lt] \label{def:lt} Fix any $\ltp =(\csq,\bet)\in  \R_{>0}^2$. LT-class($\mathfrak{q}$) is a class of initial data for KPZ fixed point which consists of the following sub-classes.
		\begin{itemize}
			\item $\h_0(x)=-\infty\cdot\ind_{x\neq 0},$
			\item $\h_0$ is deterministic with $\h_0(x) \le \csq\sqrt{1+|x|}$ for some $\csq>0$ and $|\h_0(0)|\le \bet$.
			\item $\h_0(x)=\bm(x)$, a two sided Brownian Motion with diffusion coefficient $2$.
		\end{itemize}
	\end{definition}
	The  KPZ fixed point is well defined for the above two classes of initial data \cite{sv21}. We now state the sharp one point tail asymptotics for the KPZ fixed point for different types of initial data within \st.
	\begin{lemma}\label{lem:onepttail} Fix any $\epsilon>0$. 
		\begin{enumerate}[label=(\alph*), leftmargin=15pt]
			\item Fix any $\stp=(\af,\dt,\bet) \in \R\times \R_{>0}^2$. Let $\h_t$ be the KPZ fixed point starting from an initial data $\h_0$ from the \st. There exists a constant $\Con(\bet)>0$ such that for all $s>0$ and $t\ge 1$ we have
			\begin{align*}
				\Pr(t^{-1/3}\h_t(0)\le -s) \le \Con\exp\left(-\tfrac1\Con s^{3}\right).
			\end{align*}
			
			\item Let $\h_t$ be the KPZ fixed point started from a functional initial data $\h_0$ from the \st. There exist constants $s_0(\e,\h_0)>0$, $\rho(\h_0)>1$ such that for all $s\ge s_0$ and $t\in [1,\rho]$ we have
			\begin{align}\label{eq:tbdg}
				\exp\left(-(\tfrac43+\epsilon)s^{3/2}\right) \le \Pr(t^{-1/3}\h_t(0)\ge s) \le \exp\left(-(\tfrac43-\epsilon)s^{3/2}\right).
			\end{align}
			\item \label{2.6c} Let $\h_t$ be the KPZ fixed point starting from $\h_0(x)=\B(x)$. There exists a constant $s_0(\e)>0$ such that for all $s\ge s_0$ and $t>0$ we have
			\begin{align}\label{eq:tbdg0}
				\exp\left(-(\tfrac23+\epsilon)s^{3/2}\right) \le \Pr(t^{-1/3}\h_t(0)\ge s)=\Pr(\BR \ge s) \le \exp\left(-(\tfrac23-\epsilon)s^{3/2}\right).
			\end{align}
			where $\BR$ is a Baik-Rains distributed random variable.
			\item \label{2.6d} Let $\B$ be a Brownian motion with variance $2$. There exists a constant $s_0(\e)>0$ such that for all $s\ge s_0$ and all interval $[a,b]$ containing $\sqrt{s}/2$ we have
			\begin{align*}
				\Pr( \sup_{z\in [a,b]} [\B(z)+\cl(z,0;0,1)] \ge s) \ge \exp\left(-(\tfrac23+\epsilon)s^{3/2}\right).
			\end{align*}
		\end{enumerate}
	\end{lemma}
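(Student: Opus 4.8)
The plan is to treat the four assertions more or less separately: the variational formula \eqref{def:kpzfp}, together with the scaling and stationarity identities of Lemmas~\ref{lem:sym}--\ref{lem:2.3}, handles the soft parts, while one genuinely quantitative chaining estimate built from Lemma~\ref{lem:scont} is needed for the upper bound in (b). For part (a) and the lower bound in (b) I would restrict the supremum in \eqref{def:kpzfp} to a single point: for narrow-wedge or functional data $\h_t(0)\ge \h_0(0)+\cl(0,0;0,t)\ge -\bet+\cl(0,0;0,t)$ (with $\h_0(0)=0$, $\bet=0$ in the narrow-wedge case), and for Brownian data $\h_t(0)\stackrel{d}{=}t^{1/3}\BR$ with $\BR\ge \B(0)+\calA(0)=\calA(0)$ by Lemma~\ref{lem:2.3}. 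Since $t^{-1/3}\cl(0,0;0,t)\stackrel{d}{=}\ck(0,0;0,1)\stackrel{d}{=}\tw$ (Lemmas~\ref{lem:sym} and \ref{lem:2.2}) and $t\ge1$, the lower tail in (a) reduces to the cubic Tracy--Widom lower tail $\Pr(\tw\le -s)\le \Con e^{-s^3/\Con}$ of \cite{ramirez2011beta}, and the lower bound in (b) to the matching $\Pr(\tw\ge s)\ge e^{-(\frac43+\e/2)s^{3/2}}$ of \cite{ramirez2011beta}; the bounded shift by $\bet$ is absorbed using $(s+\bet)^{3/2}=s^{3/2}+O(\sqrt s)$, and the small-$s$ range into the constant. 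Part (c) is immediate: $t^{-1/3}\h_t(0)\stackrel{d}{=}\BR$ by Lemma~\ref{lem:2.3}, and \eqref{eq:tbdg0} is precisely the sharp Baik--Rains upper-tail asymptotics of \cite{ferrari2021upper}.

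The upper bound in (b) is the crux. By monotonicity of \eqref{def:kpzfp} in the initial data and $\h_0(x)\le \af+(1-\dt)x^2$, the rescaling $z=t^{2/3}w$ together with $\cl(t^{2/3}w,0;0,t)=\ck(t^{2/3}w,0;0,t)-t^{1/3}w^2\stackrel{d}{=}t^{1/3}(\calA(w)-w^2)$ (Lemmas~\ref{lem:sym} and \ref{lem:2.2}) gives, for $t\in[1,\rho]$ with $\rho=\rho(\dt)>1$ chosen so that $(1-\dt)\rho<1$,
\[
\Pr\big(t^{-1/3}\h_t(0)\ge s\big)\le \Pr\Big(\sup_{w\in\R}\big[\calA(w)-c_0 w^2\big]\ge s-|\af|\Big),\qquad c_0:=\min\{1,\,1-(1-\dt)\rho\}>0 .
\]
It then remains to prove $\Pr(\sup_w[\calA(w)-c_0 w^2]\ge u)\le e^{-(\frac43-\e)u^{3/2}}$ for $u$ large, which I would do by splitting $\R$ into $\{|w|\le K\sqrt u\}$ and $\{|w|>K\sqrt u\}$. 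On the far region, a union bound over unit intervals, the crude tail $\Pr(\sup_{[k,k+1]}\calA\ge v)\le \Con e^{-v^{3/2}/\Con}$ (from \eqref{eq:ProbBd2} after stationarizing) and superadditivity of $x\mapsto x^{3/2}$ give a bound $\le \Con e^{-(1+c_0^{3/2}K^3)u^{3/2}/\Con}$, which beats $e^{-(\frac43-\e)u^{3/2}}$ once $K=K(c_0,\Con,\e)$ is large. On the near region, a union bound over the $O(K\sqrt u)$ unit intervals and stationarity of $\calA$ reduce matters to $\Pr(\sup_{[0,1]}\calA\ge u)$; using $\sup_{[0,1]}\calA\le \calA(0)+\sup_{[0,1]}|\calA(\cdot)-\calA(0)|$, the first term contributes $\Pr(\calA(0)\ge(1-\eta)u)\le e^{-(\frac43-\e_1)((1-\eta)u)^{3/2}}$ by the Tracy--Widom upper bound, while the fluctuation term is $\le \Con e^{-c\eta^2 u^2}$ by chaining from the sub-Gaussian increment bound in \eqref{eq:difest}; since $u^2\gg u^{3/2}$, a suitable choice of $\eta,\e_1$ yields $\Pr(\sup_{[0,1]}\calA\ge u)\le e^{-(\frac43-\e/2)u^{3/2}}$, and absorbing the polynomial prefactor and the shift $|\af|$ finishes (b). This last estimate is the delicate point, and where I expect most of the work: extracting the \emph{sharp} constant $\tfrac43$ requires the spatial fluctuation of $\calA$ over a unit window to be controlled at the Gaussian scale $e^{-cv^2}$ (so it is negligible at scale $e^{-cu^{3/2}}$), which forces chaining from the fixed-endpoint bound \eqref{eq:difest} rather than from the already-integrated but only $e^{-cs^{3/2}}$ modulus bound \eqref{eq:ProbBd3}.

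For part (d), since $\sqrt s/2\in[a,b]$ by hypothesis, I would restrict the supremum to the single point $z_0:=\sqrt s/2$: then $\sup_{z\in[a,b]}[\B(z)+\cl(z,0;0,1)]\ge \B(z_0)+\ck(z_0,0;0,1)-z_0^2$, which is $\ge s$ on the event $\{\B(z_0)\ge s\}\cap\{\ck(z_0,0;0,1)\ge s/4\}$ because $z_0^2=s/4$. Since $\B$ is independent of $\cl$, with $\B(z_0)\sim N(0,\sqrt s)$ and $\ck(z_0,0;0,1)\stackrel{d}{=}\tw$ (Lemma~\ref{lem:2.2}), independence gives the lower bound $\Pr(N(0,\sqrt s)\ge s)\,\Pr(\tw\ge s/4)$; the Gaussian factor is $\ge e^{-(\frac12+\e/2)s^{3/2}}$ (since $s^2/(2\sqrt s)=\tfrac12 s^{3/2}$), the Tracy--Widom factor is $\ge e^{-(\frac43+\e/2)(s/4)^{3/2}}=e^{-(\frac16+\e/16)s^{3/2}}$ by the lower bound $\Pr(\tw\ge u)\ge e^{-(\frac43+\e/2)u^{3/2}}$, and the product is $\ge e^{-(\frac23+\e)s^{3/2}}$ for $s\ge s_0(\e)$. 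The location $\sqrt s/2$ is exactly the point minimizing the combined Gaussian-plus-Tracy--Widom cost of forcing the field past level $s$ at a single site, which is why the statement restricts to intervals containing it.
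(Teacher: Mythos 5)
Your argument for parts (a), (d), and the equality in (c) matches the paper's proof essentially line for line: restriction to a single point, the Tracy--Widom lower tail of \cite{ramirez2011beta}, Lemma~\ref{lem:2.3}\ref{2.3b}, and the independent two-factor bound at $z_0=\sqrt{s}/2$ for (d). The genuine divergence is in part (b) (and, to a lesser extent, the inequalities in (c)): the paper obtains both sides of \eqref{eq:tbdg} by invoking Theorem~1.1 of \cite{ferrari2021upper} directly, whereas you re-derive the upper bound from scratch. Your route --- bound $\h_0$ by a parabola, rescale by $t^{2/3}$ to land on $\sup_w[\calA(w)-c_0 w^2]$, peel off the far field $|w|>K\sqrt u$ with the crude $e^{-cv^{3/2}}$ tail plus superadditivity, and handle each near-field unit interval by splitting $\sup_{[0,1]}\calA\le\calA(0)+\sup_{[0,1]}|\calA(\cdot)-\calA(0)|$, chaining the second term at the sub-Gaussian scale $e^{-cv^2}$ from \eqref{eq:difest} --- is logically sound and trades one external input (the sharp KPZ fixed point upper tail of \cite{ferrari2021upper}) for a more elementary one (the sharp Tracy--Widom upper tail) at the cost of a careful chaining computation. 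Two things you should say explicitly to make this airtight: first, you need the sharp \emph{lower} bound $\Pr(\tw\ge u)\ge e^{-(\frac43+\e)u^{3/2}}$ for the lower bound of (b) (restricting \eqref{def:kpzfp2} to $z=0$), and while \cite{ramirez2011beta} does supply this, it is a different statement from the cubic lower-tail estimate you quote for (a); second, after the parabolic comparison your constant $c_0=1-(1-\dt)\rho$ must be strictly positive, which pins down the admissible $\rho(\h_0)>1$ exactly as the paper's $\rho$ depends on $\stp$. You have correctly identified the key obstruction (that the integrated modulus bound \eqref{eq:ProbBd3} at scale $e^{-cs^{3/2}}$ is too weak to isolate the sharp $\tfrac43$, forcing chaining from the Gaussian-scale increment bound \eqref{eq:difest} instead), which is precisely why the paper outsources this to \cite{ferrari2021upper} rather than reproving it.
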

	\begin{proof}
		Note that $$t^{-1/3}\h_t(0) \ge -\bet+t^{-1/3}\cl(0,0;0,t) \stackrel{d}{=} -\bet+\tw.$$
		Part (a) now follows by utilizing the lower tail estimates for the $\tw$ distribution \cite{ramirez2011beta}. The equality in Part (c) follows from Lemma \ref{lem:2.3} \ref{2.3b}. The estimates in part (b) and (c) of the above lemma are weak versions of Theorems 1.1 and 1.3 in \cite{ferrari2021upper} respectively. For part (d) using Brownian tail and Tracy-Widom tail estimates \cite{ramirez2011beta}, observe that for all $s$ large enough 
		\begin{align*}
			\Pr( \sup_{z\in [a,b]} \B(z)+\cl(z,0;0,1) \ge s) & \ge \Pr( \B({\sqrt{s}/2})+\cl(\sqrt{s}/{2},0;0,1) \ge s)  \ge \Pr(\B({\sqrt{s}/2})\ge s)\Pr(\tw \ge \tfrac{s}4).  
		\end{align*}
	Note that the right side of the last inequality in the above display is bounded below by $e^{-\frac{1+\e}{2}s^{3/2}}e^{-\frac{1+3\e}6s^{3/2}}=e^{-(\frac23+\epsilon)s^{3/2}}$. This concludes the proof.
	\end{proof}

	\section{Properties of the Argmax}\label{sec3}

	Recall that given an initial data $\h_0$, the KPZ fixed point is given by the following variational problem:
	\begin{align}\label{def:kpzfp2}
		\h_t(x)=\sup_{z\in \R} (\h_0(z)+\cl(z,0;x,t)).
	\end{align}
	The goal of this section is to provide quantitative estimates for the location of the above supremum. These estimates will allow us to restrict the above supremum over appropriate compact intervals, which in turn will be utilized in Section \ref{sec4} to extract several probabilistic properties of the KPZ fixed point.  
	
	Given an arbitrary initial data $\h_0$, we denote $Z_t(x;\h_0)$ to be rightmost maximizer of the r.h.s.~of \eqref{def:kpzfp2},
	\begin{align}\label{def:ztx}
		Z_t(x;\h_0):=\sup \underset{z\in \R}{\operatorname{argmax}} (\h_0(z)+\cl(z,0;x,t)).
	\end{align}
	We will be also interested in a special case of the above definition, when initial data is given by a Brownian motion with a given drift. We use a separate notation for this case:
	\begin{align}
		\label{def:zux}
		Z_t^{\mu}(x):=\sup \underset{z\in \R}{\operatorname{argmax}} (\B(z)+\mu \cdot z+\cl(z,0;x,t)).
	\end{align}
	
	The following lemma discusses some useful distributional identities for $Z_t^{\mu}(a)$ and a comparison principle which allows us to compare increments of the KPZ fixed point with arbitrary initial data with that of Brownian motion with drift as initial data.
	
	\begin{lemma}\label{lem:argmaxprop} Consider the KPZ fixed point $\h_t$ started from an initial data $\h_0$ in \st. For each $\mu\in \R$, consider the KPZ fixed point $\h_t^{\mu}$ started from initial data $\h_0(x)=\B(x)+\mu x$. Recall the corresponding maximizers from \eqref{def:ztx} and \eqref{def:zux}. We have the following.
		\begin{enumerate}[label=(\alph*), leftmargin=15pt]
			\item \label{3.1a} For every $\mu,x \in \R$ and $t>0$ we have $Z_t^{0}(0)\stackrel{d}{=}t^{2/3}Z_1^0(0)$ and
			\begin{align}\label{eq:nt}
				Z_t^{\mu}(x) \stackrel{d}{=} Z_t^{0}(x+\tfrac12\mu t) \stackrel{d}{=} Z_t^{0}(0)+x+\tfrac12\mu t.
			\end{align}
			\item \label{lem:compare} Fix $a,\mu,t>0$ and consider the event
			\begin{align}\label{eq:etma}
				\EE_t^{\mu}(a):=\left\{Z_t(a;\h_0) \le Z_t^{\mu}(-a), Z_t(-a;\h_0) \ge Z_t^{-\mu}(a)\right\}.
			\end{align}
			On the event $\EE_t^{\mu}(a)$ we have for all $x,y \in [-a,a]$ with $x\le y$ we have
			\begin{align*}
				\h_t^{-\mu}(y)-\h_t^{-\mu}(x) \le \h_t(y)-\h_t(x) \le \h_t^{\mu}(y)-\h_t^{\mu}(x).
			\end{align*}
		\end{enumerate}

	\end{lemma}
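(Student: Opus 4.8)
The plan is to treat part \ref{3.1a} by direct distributional manipulations using the symmetries of the directed landscape collected in Lemma~\ref{lem:sym}, and part \ref{lem:compare} by the Brownian replacement/ordering argument that originates in \cite{pim14}. For \ref{3.1a}, the scaling relation $Z_t^0(0)\stackrel{d}{=}t^{2/3}Z_1^0(0)$ should follow from the $1/3$--$2/3$ scaling invariance of $\cl$ in Lemma~\ref{lem:sym}\ref{2.1d} together with the Brownian scaling $r^{-1}\B(r^2 x)\stackrel{d}{=}\B(x)$: writing out $\sup\operatorname{argmax}_z(\B(z)+\cl(z,0;0,t))$ and substituting $z=t^{2/3}w$ turns the objective into $t^{1/3}$ times $\B(w)+\cl(w,0;0,1)$ in distribution, and the argmax transforms covariantly. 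For the identity $Z_t^{\mu}(x)\stackrel{d}{=}Z_t^{0}(x+\tfrac12\mu t)$, the idea is to absorb the drift $\mu z$ into a shear of the landscape: completing the square, $\B(z)+\mu z+\cl(z,0;x,t)=\B(z)+\cl(z,0;x,t)+\mu z$, and then use the shear symmetry Lemma~\ref{lem:sym}\ref{2.1c} (which moves the $y$-endpoint by $ct$ when the $x$-endpoint moves by $cs=0$) — more precisely one rewrites $\cl(z,0;x,t)=\ck(z,0;x,t)-(z-x)^2/t$, so $\mu z-(z-x)^2/t=-(z-(x+\tfrac12\mu t))^2/t+\text{const}$, which is exactly the drift-free objective with endpoint shifted to $x+\tfrac12\mu t$; hence the argmax of the drifted problem equals in distribution the argmax of the undrifted problem at the shifted endpoint. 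The second equality $Z_t^0(x+\tfrac12\mu t)\stackrel{d}{=}Z_t^0(0)+x+\tfrac12\mu t$ is translation invariance, Lemma~\ref{lem:sym}\ref{2.1b}, applied to $\B$ (stationary increments) and $\cl$ simultaneously.

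For part \ref{lem:compare}, I would invoke the Brownian replacement trick in the precise form described in the introduction (the display following $\EE^{\mu}(a)$ in Section~\ref{sec1.1}), which is the statement that on the event $\EE_t^{\mu}(a)=\{Z_t(a;\h_0)\le Z_t^{\mu}(-a),\ Z_t(-a;\h_0)\ge Z_t^{-\mu}(a)\}$ one has the sandwich
\begin{align*}
\h_t^{-\mu}(x)-\h_t^{-\mu}(0)\le \h_t(x)-\h_t(0)\le \h_t^{\mu}(x)-\h_t^{\mu}(0),\quad x\in[0,a],
\end{align*}
and the reversed chain on $[-a,0]$. The mechanism is the standard crossing/ordering lemma for last-passage-type variational problems: if the maximizer $Z_t(a;\h_0)$ for the general data computed at the right endpoint $a$ lies to the left of the maximizer $Z_t^{\mu}(-a)$ for the drifted Brownian data computed at the left endpoint $-a$, then by the planarity of geodesics (two geodesics from ordered sources to ordered sinks cannot cross more than once) the increment $\h_t(a)-\h_t(0)$ is dominated by $\h_t^{\mu}(a)-\h_t^{\mu}(0)$, and symmetrically for the lower bound with drift $-\mu$. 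I would then upgrade the two-point bound at $\{0,x\}$ to the claimed bound at arbitrary $\{x,y\}\subset[-a,a]$ with $x\le y$ simply by subtracting: $\h_t(y)-\h_t(x)=(\h_t(y)-\h_t(0))-(\h_t(x)-\h_t(0))$, and then splitting into the cases $0\le x\le y$, $x\le y\le 0$, and $x\le 0\le y$, in each of which the monotone sandwiching of the relevant one-sided increments combines to give $\h_t^{-\mu}(y)-\h_t^{-\mu}(x)\le \h_t(y)-\h_t(x)\le \h_t^{\mu}(y)-\h_t^{\mu}(x)$ (using in the mixed case that the upper bound on $[0,y]$ uses drift $+\mu$ and the upper bound on $[x,0]$ also uses drift $+\mu$ after reflecting the second display, so the signs are consistent).

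The part I expect to require the most care is making the crossing argument in \ref{lem:compare} rigorous at the level of the KPZ fixed point rather than a prelimiting model — in particular justifying that the argmaxes are a.s.\ well-defined (finite, and one may take rightmost maximizers as in \eqref{def:ztx}) for all initial data in \st, and that the geodesic-ordering lemma transfers through the variational formula \eqref{def:kpzfp} without loss. Here I would lean on \cite{sv21} for well-posedness and uniqueness of maximizers for sub-parabolic and Brownian data, and cite \cite{pim,pim14} for the ordering principle, which as noted in the text is fundamentally a property of the basic coupling inherited in the $\cl$-limit; the drifted Brownian data $\B(x)+\mu x$ is admissible since it is still sub-parabolic, and its stationarity (Lemma~\ref{lem:2.3}\ref{2.3c}) is what makes the comparison useful downstream but is not needed for the inequality itself. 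The remaining bookkeeping — tracking which endpoint ($\pm a$) pairs with which drift sign ($\pm\mu$) in the definition of $\EE_t^{\mu}(a)$ — is routine once the one-sided statements are in hand.
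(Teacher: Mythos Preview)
Your treatment of part \ref{3.1a} is correct and coincides with the paper's: both arguments decompose $\cl=\ck-(z-x)^2/t$, absorb the linear drift $\mu z$ into a shift of the parabola (equivalently, apply the shear symmetry Lemma~\ref{lem:sym}\ref{2.1c} to $\ck$), and finish with translation invariance of $\B$ and $\cl$ for the second equality. The paper's own proof of part \ref{lem:compare} is in fact nothing more than the citation ``Part (b) is derived in Equation (4.9) in \cite{pim}'', so in identifying the mechanism (geodesic/argmax ordering, the Brownian replacement trick of \cite{pim14,pim}) you are already supplying more than the paper does.

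There is, however, a genuine gap in your proposed upgrade from the two-point sandwich at base point $0$ to the general pair $x\le y$ in $[-a,a]$. Your subtraction step works in the mixed case $x\le 0\le y$ (because, after reflecting, the upper bound on $\h_t(0)-\h_t(x)$ for $x\in[-a,0]$ and the upper bound on $\h_t(y)-\h_t(0)$ for $y\in[0,a]$ both use drift $+\mu$ and add cleanly), but it fails in the one-sided cases. For $0\le x\le y$ you need an \emph{upper} bound on $\h_t(y)-\h_t(0)$ and a \emph{lower} bound on $\h_t(x)-\h_t(0)$; from the sandwich on $[0,a]$ these come with drifts $+\mu$ and $-\mu$ respectively, so subtracting yields only
\[
\h_t(y)-\h_t(x)\le \big(\h_t^{\mu}(y)-\h_t^{\mu}(0)\big)-\big(\h_t^{-\mu}(x)-\h_t^{-\mu}(0)\big),
\]
which is not the claimed bound $\h_t^{\mu}(y)-\h_t^{\mu}(x)$. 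The same obstruction arises for $x\le y\le 0$ and for the lower inequality.

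The fix is that no reduction is needed: Pimentel's comparison (Equation (4.9) in \cite{pim}) is precisely the statement that on $\{Z_t(a;\h_0)\le Z_t^{\mu}(-a)\}$ the map $x\mapsto \h_t^{\mu}(x)-\h_t(x)$ is nondecreasing on $[-a,a]$, and on $\{Z_t(-a;\h_0)\ge Z_t^{-\mu}(a)\}$ the map $x\mapsto \h_t(x)-\h_t^{-\mu}(x)$ is nondecreasing on $[-a,a]$. These monotonicities are equivalent to the two inequalities in \ref{lem:compare} for \emph{all} $x\le y$ in $[-a,a]$ simultaneously; the base-point-$0$ version quoted in Section~\ref{sec1.1} is just a special case, not an input to be bootstrapped. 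So you should cite the monotonicity form of the comparison directly rather than attempting the subtraction.
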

	\begin{proof} For part (a), observe that as processes in $z$,
		\begin{align*}
			\B(z)+\cl(z,0;0,t) & \stackrel{d}{=} t^{1/3}\left[\B(t^{-2/3}z)+\cl(t^{-2/3}z,0;0,1)\right]. 
		\end{align*}
		Considering the rightmost maximizers for each of the above functions shows $Z_t^{0}(0)\stackrel{d}{=}t^{2/3}Z_1^0(0)$. For \eqref{eq:nt} note that by Lemma \ref{lem:sym} \ref{2.1c}
		\begin{align*}
			\B(z)+\mu\cdot z+\cl(z,0;x,t) &  =\B(z)+\mu\cdot z+\ck(z,0;x,t)-(z-x)^2/t \\ & \stackrel{d}{=} \B(z)+\mu\cdot z+\ck(z,0;x+\tfrac12\mu t,t)-(z-x)^2/t \\ & = \B(z)+\mu\cdot z+\cl(z,0;x+\tfrac12\mu t,t)-(z-x)^2/t+(z-x-\tfrac12\mu t)^2/t \\ & = \B(z)+\mu\cdot x+\cl(z,0;x+\tfrac12\mu t,t)+\tfrac14\mu^2 t.
		\end{align*}
		Considering the rightmost maximizers for the above functions yields the first part of \eqref{eq:nt}. The second distributional equality follows by considering the rightmost maximizers for the following two functions
		\begin{align*}
			\B(z)+\cl(z,0;x+\tfrac12\mu t,t) \stackrel{d}{=} \B(z-x-\tfrac12\mu t)-\B(-x-\tfrac12\mu t)+\cl(z-x-\tfrac12\mu t,0;0,t). 
		\end{align*}
		Part (b) is derived in Equation (4.9) in \cite{pim}.
	\end{proof}
	
	We now turn towards the task of estimating the location of the argmax. We first prove a technical lemma related to it which argues when the supremum in \eqref{def:kpzfp2} is restricted to $|z|\ge r$, with high probability the supremum value is less than $-(\operatorname{const}) \cdot r^2$.

	\begin{proposition}
		\label{lem:pr-tail1} Fix any $\stp \in \R\times \R_{>0}^2$. Let $\h_0$ be an initial data of the KPZ fixed point from the \st. There exist constants $\rho=\rho(\stp)\in (1,2]$ and $\Con=\Con(\stp)>0$ such that for all $t\in [1,\rho]$ and $|x|\le \Con^{-1}$,
		\begin{align}\label{eq:ConvTail3}
			\Pr\Big(\sup_{|z|\ge r} \big[\h_0(z)+\cl(z,0;xr,t)\big] \ge -\tfrac1\Con r^2\Big) \le \Con\exp\left(-\tfrac1\Con r^3 \right).
		\end{align}
	\end{proposition}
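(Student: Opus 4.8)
The plan is to split the supremum $\sup_{|z|\ge r}[\h_0(z)+\cl(z,0;xr,t)]$ according to whether $z$ is on the same side as $xr$ or the opposite side, and in each case to bound the directed landscape by its parabolic decay (Lemma~\ref{lem:bbv}) and absorb the growth of $\h_0$ into the parabola. Concretely, using Lemma~\ref{lem:bbv} with $v=t\in[1,\rho]$, for $|z|\ge r$ and $|xr|\le r/2$ (which holds once $|x|\le 1/2$) we have $|z-xr|\ge |z|/2 \ge r/2$, so with high probability
\begin{align*}
\cl(z,0;xr,t) \le -\frac{(z-xr)^2}{t(1+\delta)} + s \le -\frac{z^2}{8t(1+\delta)} + s
\end{align*}
simultaneously over all such $z$, where we take $s = c r^2$ for a small constant $c$; the exceptional probability is then $\le \Con(r t^{-2/3}+1)\exp(-\tfrac1\Con (cr^2)^{3/2} t^{-1/2}) \le \Con\exp(-\tfrac1\Con r^3)$ since $t\in[1,\rho]$ is bounded. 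Here I would actually apply Lemma~\ref{lem:bbv} in a dyadic-annulus form: for each $j\ge 0$ bound $\sup_{2^j r \le |z| \le 2^{j+1}r}\cl(z,0;xr,t)$ by $-\tfrac{(2^j r)^2}{C} + s$ on an event of probability $\le \Con(2^j r + 1)\exp(-\tfrac1\Con (2^j r)^3)$, then sum over $j$; the sum is dominated by the $j=0$ term and gives the stated bound.

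Next I would handle the three sub-classes of ST-class($\stp$) separately when combining with $\h_0$. For the narrow wedge case $\h_0 = -\infty\cdot\ind_{z\ne 0}$ the supremum over $|z|\ge r$ is simply $-\infty$, so the bound is trivial. For deterministic functional data with $\h_0(z)\le \af + (1-\dt)z^2$, on the good event above we get, for $|z|\ge r$,
\begin{align*}
\h_0(z) + \cl(z,0;xr,t) \le \af + (1-\dt)z^2 - \frac{z^2}{8t(1+\delta)} + c r^2,
\end{align*}
and here is the one delicate point: the parabolic gain $\tfrac{z^2}{8t(1+\delta)}$ is smaller than $1$ (the coefficient of $z^2$ in a sharp bound) when $t$ is close to $1$ only if $\delta$ and the annulus constants are chosen carefully — in fact $\tfrac{1}{8t(1+\delta)}$ is far below $\dt$ for $t$ near $1$, so this naive split loses too much. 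The fix is standard: instead of throwing away $(z-xr)^2$ down to $z^2/8$, keep the full parabola. Since $|xr|\le r/2 \le |z|/2$ we have $(z-xr)^2 \ge (1-\tfrac12\tfrac{|x|r}{|z|})^2 z^2$, and more importantly one can choose $\delta = \delta(\dt)$ small enough and $\rho = \rho(\dt) > 1$ close enough to $1$ that $\tfrac{1}{t(1+\delta)} > 1 - \tfrac{\dt}{2}$ for all $t\in[1,\rho]$; then $(1-\dt)z^2 - \tfrac{(z-xr)^2}{t(1+\delta)} \le (1-\dt)z^2 - (1-\tfrac{\dt}{2})(z-xr)^2 \le -\tfrac{\dt}{4}z^2 + \Con_{\dt}(xr)^2$ by completing the square, which for $|z|\ge r$ and $|x| \le \Con^{-1}$ small is $\le -\tfrac{\dt}{8}r^2$. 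Adding $\af$ and $cr^2$ (with $c$ small) keeps this below $-\tfrac1\Con r^2$ for $r$ large; for small $r$ one enlarges $\Con$. Absorbing the exceptional set of Lemma~\ref{lem:bbv} completes this case.

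For the Brownian case $\h_0(z) = \B(z)$, the extra ingredient is a tail bound for $\sup_{|z|\ge r}[\B(z) - \eta z^2]$: by a union bound over dyadic annuli and the reflection principle, $\Pr(\sup_{2^j r\le |z|\le 2^{j+1}r}\B(z) \ge \tfrac{\eta}{2}(2^j r)^2) \le \Con\exp(-\tfrac1\Con \eta^2 (2^j r)^3)$, whose sum over $j$ is $\le \Con\exp(-\tfrac1\Con r^3)$; combined as above with the parabolic decay (using $\delta$ small so that $\tfrac{1}{t(1+\delta)}$ still exceeds, say, $\tfrac12$, leaving a net parabola $-\tfrac14 z^2$ against which both $\B(z)$ and the constant $xr$-correction are controlled) this yields the claim. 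I expect the main obstacle to be the bookkeeping in the functional case: getting the interplay of the three small parameters — $\dt$ (fixed by the data), $\delta$ (the slack in Lemma~\ref{lem:bbv}), and $\rho$ (how close $t$ is allowed to be to $1$) — so that the residual coefficient of $z^2$ stays strictly negative and uniformly bounded away from $0$ for all $t\in[1,\rho]$, and simultaneously keeping the $xr$ cross-term absorbed by the smallness of $|x|$. Everything else is routine dyadic summation of the exponential estimates already recorded in Lemma~\ref{lem:bbv} and Lemma~\ref{lem:scont}.
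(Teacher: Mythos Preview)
Your proposal is correct and follows essentially the same strategy as the paper's proof: bound $\h_0$ by a sub-parabola, choose $\rho$ close enough to $1$ so that the parabolic decay from Lemma~\ref{lem:bbv} dominates, handle the $xr$-shift by taking $|x|$ small, and for Brownian data first control $\B(z)$ by a quadratic on a high-probability event (the paper cites \cite[Lemma~2.12]{CH16}, which is precisely your dyadic reflection-principle bound). The only cosmetic differences are that the paper uses the shear invariance $\cl(z,0;xr,t)\stackrel{d}{=}\cl(z,0;0,t)+[z^2-(z-xr)^2]/t$ (Lemma~\ref{lem:sym}\ref{2.1c}) to reduce to $x=0$ rather than completing the square, and that a single application of Lemma~\ref{lem:bbv} already handles all $z\in\R$, making your dyadic-annulus decomposition unnecessary.
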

	\begin{proof} Note that when $\h_0$ is the narrow wedge at $0$, then, \eqref{eq:ConvTail3} is trivial as 
		\begin{align*}
			\sup_{z\in \mathcal{C}}\big[\h_0(z)+\cl(z,0;xr,t)\big] =-\infty
		\end{align*}
		for any subset $\mathcal{C} \subset \mathbb{R} $ not containing $0$. So, it suffices to prove the result for functional and Brownian initial data from the \st. 
		
		Fix any $\stp \in \R\times \R_{>0}^2$. We first assume $\h_0$ is a functional data in \st. Set 
		\begin{align*}
			\rho:=\begin{cases}
				\dfrac{1}{1-\frac14\dt} & \mbox{ if } \dt\le 2\\
				2 & \mbox{ otherwise}.
			\end{cases}
		\end{align*}
		
		Set $\Con_1>0$ large enough so that $x^2+2|x| \le \tfrac{\min(\dt,1)}{16}$ for all $|x|\le \Con_1^{-1}$. Without loss of generality we may assume $r\ge 4\af/\dt$. Otherwise the inequality follows trivially by choosing $\Con$ appropriately large in terms of $\stp$. Note that for all $|z|\ge r\ge 4\af/\dt$ and $t\in [1,\rho]$ we have  
		\begin{align}\label{eq:a1}
			\h_0(z) \le \af+(1-\dt)z^2 & \le -\tfrac14\dt \cdot r^2+(1-\tfrac12\dt)z^2 \\ & \le -\tfrac14\dt \cdot r^2+\frac{z^2}{t(1+\frac14\dt)}. \nonumber
		\end{align}
		For $t\ge 1$, by Lemma \ref{lem:sym} \ref{2.1c} for all $|x|\le \Con_1^{-1}$ we have 
		\begin{align*}
			\cl(z,0;xr,t)\stackrel{d}{=}\cl(z,0,0,t)+[z^2-(z-xr)^2]/t \le \cl(z,0;0,t) \le \tfrac1{16t}\min(\dt,1) z^2+\tfrac1{16}\dt r^2.
		\end{align*}
		Adding the preceding two displays yield
		\begin{align*}
			\Pr\Big(\sup_{|z|\ge r} \left(\h_0(z)+\cl(z,0;xr,t)\right) \ge -\tfrac1{16}\dt r^2 \Big) & \le \Pr\Big(\sup_{|z|\ge r} \left(\cl(z,0;0,t)+\tfrac{z^2}{t}\left[\tfrac{1}{1+\frac14\dt}+\tfrac{\min(\dt,1)}{16}\right]\right) \ge \tfrac1{8}\dt r^2 \Big) \\ & \le \Con_2\exp\big(-\tfrac1{\Con_2} r^3\big),
		\end{align*}
		where the last estimate follows from the parabolic decay estimate in Lemma \ref{lem:bbv}. Here we use the fact $\tfrac{1}{1+\frac14\dt}+\tfrac{\min(\dt,1)}{16}<1$. Choosing $\Con=\max\{\Con_1,\Con_2,\frac{16}{\dt}\}$ we get the desired result for functional initial data. 
		
		\medskip
		
		Now we suppose $\h_0(x)=\B(x)$ is a Brownian initial data. By Lemma 2.12 in \cite{CH16} we know
		\begin{align*}
			\Pr\left(\B(z) \ge \tfrac1{8}r^2+\tfrac18 z^2 \mbox{ for some } z\in \R \right) \le \Con\exp\left(-\tfrac1\Con r^3\right).
		\end{align*}
		This implies with probability at least $1-\Con\exp\left(-\tfrac1\Con r^3\right)$ for all $|z|\ge r$ we have
		\begin{align*}
			\B(z) \le \tfrac12z^2-\tfrac18r^2.
		\end{align*}
		The above fact is the analogue of \eqref{eq:a1} for Brownian initial data (with $\dt=1$). Following the same arguments as for functional data, \eqref{eq:ConvTail3} can now be established for Brownian data. This completes the proof.
	\end{proof}
	
	\begin{corollary}\label{lem:exitbound}
		Fix any $\stp \in \R\times \R_{>0}^2$. Consider the KPZ fixed point $\h$ started from an initial data $\h_0$ from the \st.  There exist constants  $\rho(\stp)\in (1,2]$ and $\Con(\stp)>0$ such that for all $r>0$, $t\in [1,\rho]$, and $|x| \le \Con^{-1}$  we have
		\begin{equation}\label{eq:exitb}
			\P\big(|Z_t(xr;\h_0)| \geq r\big) \leq  \Con \exp\left(-\tfrac1\Con r^{3}\right).
		\end{equation}
	\end{corollary}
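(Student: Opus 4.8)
The plan is to deduce Corollary~\ref{lem:exitbound} directly from Proposition~\ref{lem:pr-tail1} by a simple comparison of the value of the variational problem on $|z|\ge r$ against a lower bound for the unrestricted supremum. Fix $\stp$ and let $\rho$ and $\Con$ be as in Proposition~\ref{lem:pr-tail1}; I will shrink $\Con$ (i.e.\ enlarge it) as needed. First I would observe that the event $\{|Z_t(xr;\h_0)|\ge r\}$ forces the supremum in \eqref{def:kpzfp2} at the point $xr$ to be attained (or at least approached) at some $z$ with $|z|\ge r$, hence it is contained, up to a null set, in
\begin{align*}
	\Big\{\sup_{|z|\ge r}\big[\h_0(z)+\cl(z,0;xr,t)\big] \ge \sup_{z\in\R}\big[\h_0(z)+\cl(z,0;xr,t)\big]\Big\}
	= \Big\{\sup_{|z|\ge r}\big[\h_0(z)+\cl(z,0;xr,t)\big] \ge \h_t(xr)\Big\}.
\end{align*}
Therefore, for any threshold level $-L$,
\begin{align*}
	\P\big(|Z_t(xr;\h_0)|\ge r\big) \le \P\Big(\sup_{|z|\ge r}\big[\h_0(z)+\cl(z,0;xr,t)\big]\ge -L\Big) + \P\big(\h_t(xr) \le -L\big).
\end{align*}

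The natural choice is $L=\tfrac1\Con r^2$ with the constant $\Con$ from Proposition~\ref{lem:pr-tail1}. The first probability on the right is then bounded by $\Con\exp(-\tfrac1\Con r^3)$ by Proposition~\ref{lem:pr-tail1} directly (valid for $t\in[1,\rho]$ and $|x|\le\Con^{-1}$). For the second probability I need a matching lower-tail bound for $\h_t(xr)$, i.e.\ I must show $\P(\h_t(xr)\le -\tfrac1\Con r^2)\le \Con\exp(-\tfrac1\Con r^3)$. For $x=0$ this is exactly (a rescaled form of) Lemma~\ref{lem:onepttail}(a), since $t^{-1/3}\h_t(0)$ has a Tracy--Widom-type lower tail $\Con\exp(-\tfrac1\Con s^3)$, and $r^2 \gtrsim r^2 t^{-1/3}$ for $t\in[1,\rho]$, so a power $s=r^2 t^{-1/3}$ gives decay $\exp(-\tfrac1\Con r^6 t^{-1})$, comfortably stronger than the $\exp(-\tfrac1\Con r^3)$ we are claiming. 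For general $|x|\le\Con^{-1}$, I would get the same bound at $xr$ by the same argument used to prove Lemma~\ref{lem:onepttail}(a): bound $\h_t(xr)\ge \h_0(0)+\cl(0,0;xr,t)$, note $\h_0(0)\ge -\bet$ (using $|\h_0(0)|\le\bet$, or $\h_0(0)=0$ for narrow wedge), and use that $\cl(0,0;xr,t)\stackrel{d}{=}\cl(0,0;0,t)+[(xr)^2-\,\cdot\,]$ up to a deterministic parabolic shift bounded by $(xr)^2\le \Con^{-2}r^2$, so the lower tail of $\cl(0,0;xr,t)$ at level $-\tfrac1{2\Con}r^2$ is again controlled by the Tracy--Widom lower tail, now at argument $\gtrsim r^2$, hence $\le\Con\exp(-\tfrac1\Con r^6)\le\Con\exp(-\tfrac1\Con r^3)$.

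Combining the two displayed bounds and absorbing constants into a single $\Con=\Con(\stp)$ yields \eqref{eq:exitb}. A couple of routine points I would need to handle carefully but not dwell on: (i) the reduction to the first display requires knowing the supremum in \eqref{def:kpzfp2} is actually attained (so that $|Z_t(xr;\h_0)|\ge r$ genuinely implies the restricted sup dominates), which holds a.s.\ for the initial data in \st\ by the decay estimate of Proposition~\ref{lem:pr-tail1} together with continuity of $\cl$, and can also be sidestepped by an $\varepsilon$-approximate-maximizer argument; (ii) the case $r$ small, where the bound is vacuous after enlarging $\Con$, and the case $r<1$ versus $r\ge1$ in invoking Lemma~\ref{lem:bbv}-type inputs through Proposition~\ref{lem:pr-tail1}. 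The only genuinely substantive ingredient, Proposition~\ref{lem:pr-tail1}, is already established, so the main (minor) obstacle is simply arranging the lower-tail bound for $\h_t(xr)$ uniformly over $|x|\le\Con^{-1}$ and $t\in[1,\rho]$ with the correct constant bookkeeping; everything else is a one-line union bound.
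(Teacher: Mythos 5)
Your proposal is correct and follows essentially the same route as the paper: apply Proposition~\ref{lem:pr-tail1} to the restricted supremum, use the trivial lower bound $\h_t(xr)\ge\h_0(0)+\cl(0,0;xr,t)\stackrel{d}{=}\h_0(0)+t^{1/3}\tw-(xr)^2/t$ together with the Tracy--Widom lower tail to rule out the KPZ fixed point value dropping below $-\tfrac1\Con r^2$, and combine by a union bound. The only cosmetic difference is that you phrase the comparison through $\h_t(xr)$ before reducing to $\h_0(0)+\cl(0,0;xr,t)$, whereas the paper compares directly, but the content and constant bookkeeping are identical.
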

	\begin{proof} Consider $\rho(\stp)\in (1,2]$ and $\Con(\stp) >0$ from Proposition \ref{lem:pr-tail1}. Without loss of generality assume $\Con\ge 2$.
		\begin{align*}
			\h_0(0)+\cl(0,0;xr,t) \stackrel{d}{=} \h_0(0)+t^{1/3}\tw-\tfrac{x^2r^2}{t}.
		\end{align*}
		As $t\ge 1$, we have $\frac{x^2r^2}{t} \le  \frac{1}{2\Con}r^2$. Thus as $|\h_0(0)|\le \bet$ by lower tail estimates for $\tw$ from \cite{ramirez2011beta} we get that
		\begin{align*}
			\Pr\left(\h_0(0)+\cl(0,0;xr,t) \le -\tfrac1{\Con}r^2\right) \le   \Pr\Big(t^{1/3}\tw \le -\tfrac1{2\Con}r^2+\bet\Big) \le \til\Con\exp\big(-\tfrac1{\til\Con} r^6\big).
		\end{align*}
		for some different constant $\til{\Con}(\stp)>0$. The above estimate shows that with high probability $\h_0(0)+\cl(0,0;xr,t) >-\frac{1}{\Con}r^2$. However by Lemma \ref{lem:pr-tail1} we know the supremum in \eqref{def:kpzfp2} when restricted to $|z|\ge r$ is less than $-\frac1{\Con}r^2$ with high probability. Thus setting $\Con_0:=2\cdot \max\{\Con,\til{\Con}\}$, in view of \eqref{eq:ConvTail3}, we thus arrive at \eqref{eq:exitb}. This concludes the proof.
	\end{proof}
	
	\begin{remark}\label{rm:anymax} Note that Corollary \ref{lem:exitbound} is stated for the rightmost maximizer. However, from Section 3.1 in \cite{pim} it is known that the location of the maxima in \eqref{def:kpzfp2} is almost surely unique. Thus \eqref{eq:exitb} can be restated as any maximizer exiting the interval $[-r,r]$ has probability at most $\Con \exp\left(-\frac1\Con r^{3}\right)$.
	\end{remark}
	
	Apart from the maximizer of the original variational problem in \eqref{def:kpzfp2}, we will also need an estimate for the maximizer for the variational problem of the increment: $$\h_t(0)-\h_s(0)=\sup_{z\in \R}\{ \h_s(z)-\h_s(0)+\cl(z,0;0,t-s)\}.$$ Note that the above increment upon scaling can be written as
	\begin{align*}
		\frac{\h_t(0)-\h_s(0)}{(t-s)^{\frac13}} =\sup_{z\in \R}\left[\frac{\h_s((t-s)^{\frac23}x)-\h_s(0)}{(t-s)^{\frac13}}+\calA(x)-x^2\right]
	\end{align*}
	where $\calA$ is a stationary $\operatorname{Airy}_2$ process independent of $\h$. We have the following quantitative estimate for the location of the maximizer for the above variational problem.
	\begin{lemma}\label{lem:maxbound}
		Fix any $\stp\in \R\times\R_{>0}^2$. Let $\h_0$ be either narrow wedge or functional initial data from \st. Consider the KPZ fixed point $\h$ started from initial data $\h_0$. Let $\calA$ be a stationary $\operatorname{Airy}_2$ process independent of $\h$. There exists $\rho=\rho(\stp)>1$ and $\Con=\Con(\stp)>0$ such that for all $1 \leq  s < t \leq \rho$ and  $r > 0$ we have 
		$$\P\Big(\sup_{|x|\ge r} \Big(\frac{\mathfrak{h}_s ((t-s)^{\frac{2}{3}} x) - \mathfrak{h}_s (0)}{(t-s)^{\frac{1}{3}}} + \calA(x)-x^2\Big) > \calA(0)\Big) \leq \Con \exp\big(-\tfrac{1}{\Con} r^3\big).$$
	\end{lemma}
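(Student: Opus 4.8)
I would write $\e:=t-s\in(0,\rho-1]$ and $W(x):=\e^{-1/3}\big(\h_s(\e^{2/3}x)-\h_s(0)\big)$, so the event in question is $\big\{\sup_{|x|\ge r}\big(W(x)+\calA(x)-x^2\big)>\calA(0)\big\}$. The plan is to first shrink $\rho>1$ so that $\e$ is as small as needed relative to the $\stp$-dependent constants below, and to assume $r\ge r_0(\stp)$ (otherwise the inequality holds trivially after enlarging $\Con$). Since the case $x\le -r$ is entirely analogous to the case $x\ge r$ --- replace $\h_0$ by $\h_0(-\cdot)$, which again lies in \st, and use the reflection symmetry of $\cl$ and the reversibility of $\calA$ --- it suffices to bound $\P\big(\sup_{x\ge r}(W(x)+\calA(x)-x^2)>\calA(0)\big)$ uniformly over \st. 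As $W(x)\le\tfrac14 x^2$ forces $W(x)+\calA(x)-x^2\le\calA(x)-\tfrac34 x^2$, it is enough to prove
\[
\P\big(\exists\, x\ge r:\ W(x)>\tfrac14 x^2\big)\le\Con e^{-r^3/\Con}
\quad\text{and}\quad
\P\big(\sup_{x\ge r}[\calA(x)-\tfrac34 x^2]>\calA(0)\big)\le\Con e^{-r^3/\Con}.
\]
The second of these I would obtain directly from the tail of the stationary $\operatorname{Airy}_2$ process: covering $[r,\infty)$ by the unit intervals $[k,k+1]$, $k\ge\lfloor r\rfloor$, on $[k,k+1]$ one has $\calA(x)-\tfrac34 x^2\le\sup_{[k,k+1]}\calA-\tfrac34 k^2$ while $\calA(0)\ge-|\calA(0)|$, so by \eqref{eq:ProbBd2} of Lemma~\ref{lem:scont} and Lemma~\ref{lem:2.2} (the $\tfrac32$-power Tracy--Widom tail), $\P\big(\sup_{[k,k+1]}\calA+|\calA(0)|>\tfrac34 k^2\big)\le\Con\exp(-\tfrac1\Con(k^2)^{3/2})=\Con e^{-k^3/\Con}$; summing over $k$ gives the claim, and this is exactly where the cubic rate enters (a $\tfrac32$-power tail against a parabola).

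For the first inequality the main tool is the Brownian replacement comparison of Lemma~\ref{lem:argmaxprop}\ref{lem:compare}. I would split $[r,\infty)$ into the \emph{inner} range $x\in[r,\,r\e^{-2/3}]$ and the \emph{outer} range $x>r\e^{-2/3}$, handling the inner range with a \emph{single} comparison window. Set $a:=r$ and $\mu:=C_\mu r$ with a large $C_\mu=C_\mu(\stp)$. On the event $\EE_s^{\mu}(a)$ of \eqref{eq:etma}, Lemma~\ref{lem:argmaxprop}\ref{lem:compare} (applied to the pair $0\le\e^{2/3}x\le a$) gives $\h_s(\e^{2/3}x)-\h_s(0)\le\h_s^{\mu}(\e^{2/3}x)-\h_s^{\mu}(0)$ for all $x\in[0,r\e^{-2/3}]$, and by Brownian scaling together with the stationarity of Brownian-with-drift data (Lemma~\ref{lem:2.3}\ref{2.3c}), $\e^{-1/3}\big(\h_s^{\mu}(\e^{2/3}x)-\h_s^{\mu}(0)\big)\stackrel{d}{=}\B(x)+\e^{1/3}\mu x$ as a process in $x$. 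For $x\ge r$ and $\e$ small the drift obeys $\e^{1/3}\mu x\le\tfrac18 x^2$, so the inner part of the first inequality is at most $\P(\EE_s^{\mu}(a)^c)+\P(\exists x\ge r:\B(x)>\tfrac18 x^2)$, and the last probability is $\le\Con e^{-r^3/\Con}$ by a standard reflection estimate for Brownian motion (again a parabola against a Gaussian tail). To bound $\P(\EE_s^{\mu}(a)^c)$ I would use $\EE_s^{\mu}(a)^c\subseteq\{Z_s(a;\h_0)>Z_s^{\mu}(-a)\}\cup\{Z_s(-a;\h_0)<Z_s^{-\mu}(a)\}$: Corollary~\ref{lem:exitbound} confines $Z_s(\pm a;\h_0)$ to $[-\Con_1 a,\Con_1 a]$ except on probability $\Con_1 e^{-a^3/\Con_1}=\Con_1 e^{-r^3/\Con_1}$, while Lemma~\ref{lem:argmaxprop}\ref{3.1a} gives $Z_s^{\mu}(-a)\stackrel{d}{=}Z_s^{0}(0)-a+\tfrac12\mu s$ with $Z_s^{0}(0)$ having an $e^{-(\cdot)^3}$ tail (Corollary~\ref{lem:exitbound} for Brownian initial data); choosing $C_\mu$ so that $\tfrac12\mu s\ge(\Con_1+2)a+r$ then forces $Z_s^{\mu}(-a)\ge\Con_1 a$ except on probability $\Con e^{-r^3/\Con}$, and symmetrically for the other inclusion, so $\P(\EE_s^{\mu}(a)^c)\le\Con e^{-r^3/\Con}$.

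For the outer range $x>r\e^{-2/3}$ I would run the same comparison on dyadically growing windows. For $j\ge0$ and $x$ with $\e^{2/3}x\in[2^j r,2^{j+1}r]$, apply Lemma~\ref{lem:argmaxprop}\ref{lem:compare} on $\EE_s^{\mu_j}(a_j)$ with $a_j:=2^{j+1}r$ and $\mu_j:=C_\mu a_j/s$; on $\EE_s^{\mu_j}(a_j)$ this controls $W(x)$ on the $j$-th block by a process of law $\B(x)+\e^{1/3}\mu_j x$, and because $\e$ is small the drift there still satisfies $\e^{1/3}\mu_j x\le 4C_\mu\e\,x^2\le\tfrac18 x^2$ (the extra power of $\e$ beats the fact that $\mu_j a_j$ is now quadratic in the block scale). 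Since on that block $x\ge a_j/(2\e^{2/3})\ge 2^j r$, this yields $\P\big(\EE_s^{\mu_j}(a_j)\cap\{\exists x\text{ in block }j:\,W(x)>\tfrac14 x^2\}\big)\le\Con e^{-(2^j r)^3/\Con}$, and $\P(\EE_s^{\mu_j}(a_j)^c)\le\Con e^{-a_j^3/\Con}=\Con e^{-(2^j r)^3/\Con}$ by the same argmax estimates as above. Summing over $j\ge0$ (geometric in $2^{3j}$) gives $\Con e^{-r^3/\Con}$, which with the inner estimate proves the first displayed inequality and hence the lemma.

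The step I expect to be the real obstacle is the joint calibration of $\e$ and $r$. The complement of $\EE_s^{\mu}(a)$ is only $e^{-r^3/\Con}$-small once the drift has size $\mu\gtrsim a+r$, because for a general functional datum Corollary~\ref{lem:exitbound} only confines $Z_s(\pm a;\h_0)$ to a window of width $\propto a$ and no sharper localization is available; this caps the useful reach of a single comparison at scale $\sim r$ in the variable $\e^{2/3}x$, i.e.\ at $x\sim r\e^{-2/3}$, which is precisely why the inner/outer split is forced. Conversely, one must not dyadically decompose all the way down to $\e^{2/3}x\sim\e^{2/3}r$: that would produce $\sim\log\e^{-1}$ blocks in the regime $\e^{2/3}x\lesssim r$, and the resulting prefactor $\log\e^{-1}$ would destroy the $\e$-uniformity of the bound; covering the entire inner range with the single window $a=r$ is what avoids this. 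Everything else is routine bookkeeping with the tail estimates of Sections~\ref{sec2} and~\ref{sec3}.
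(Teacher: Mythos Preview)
Your proposal is correct, and it uses the same core machinery as the paper (the Brownian replacement of Lemma~\ref{lem:argmaxprop}\ref{lem:compare} together with the argmax tail of Corollary~\ref{lem:exitbound}), but the architecture is genuinely different.

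The paper does a single unit-block decomposition in the rescaled variable $x$: for each integer $k$ with $|k|\ge r-1$ it defines the event $\Y_k$ that the sup over $[k,k+1]$ exceeds $-\tfrac12k^2$, runs one Brownian replacement per block with the \emph{small} window $a=(k+1)(t-s)^{2/3}$ and drift $\mu=\tfrac{k^2}{4(k+1)}\sim k$, and sums. The smallness of $a/\mu\sim(t-s)^{2/3}$ is what makes Corollary~\ref{lem:exitbound} immediately applicable; there is no inner/outer split and no separate treatment of the $\calA$-part (it stays inside $\Y_k$). The narrow-wedge case is dispatched separately by citing \cite[Lemma~9.5]{DOV18}.

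Your route first peels off the Airy contribution via $\{W(x)>\tfrac14x^2\}\cup\{\calA(x)-\tfrac34x^2>\calA(0)\}$, then handles the $W$-event with a \emph{large} window $a=r$ (inner range) plus dyadic windows $a_j=2^{j+1}r$ (outer range). This forces the constant $C_\mu$ in $\mu=C_\mu a$ to be large enough that $a/\mu\le\Con^{-1}$ holds for Corollary~\ref{lem:exitbound}, and then $\rho$ to be close enough to $1$ that $\e^{1/3}C_\mu\le\tfrac18$; both are fine since everything depends only on $\stp$. A minor dividend of your decomposition is that narrow wedge requires no separate citation: for that data $Z_s(\pm a;\h_0)\equiv0$, so $\EE_s^\mu(a)$ holds automatically once $\mu$ is chosen large. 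On the other hand, your inner/outer split is an artifact of fixing the physical window at scale $r$; the paper's unit-block scheme avoids it and is shorter. The outer range could also be handled more cheaply by a direct growth bound on $\h_s$ (since there one is asking $\h_s(y)-\h_s(0)>\tfrac14\e^{-1}y^2$ with $\e^{-1}$ large), but your dyadic replacement works as written.
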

	\begin{proof}
		When $\h_0$ is narrow wedge, as processes in $x$ we have
		\begin{align*}
			\frac{\mathfrak{h}_s ((t-s)^{\frac{2}{3}} x) - \mathfrak{h}_s (0)}{(t-s)^{\frac{1}{3}}} \stackrel{d}{=} \gamma\til\calA(x/\gamma^2)-\gamma\til\calA(0)-\gamma^{-3}x^2
		\end{align*}
		where $\til\calA(\cdot)$ is another stationary $\operatorname{Airy}_2$ process independent of $\calA$ and $\gamma:=s^{1/3}(t-s)^{-1/3}$. The result then follows from \cite[Lemma 9.5]{DOV18}. So, let us assume $\h_0$ is a functional initial data from \st. Without loss of generality let us assume $r\ge 2$. For each $k\in \R$, let us define the event
		\begin{align*}
			\Y_k:= \Big\{\sup_{x \in [k, k+1]} \Big(\frac{\mathfrak{h}_s ((t-s)^{\frac{2}{3}} x) - \mathfrak{h}_s (0)}{(t-s)^{\frac{1}{3}}} + \calA(x)-x^2\Big) \geq -\tfrac12k^2 \Big\}.
		\end{align*}
		We claim that there exists a constant $\Con(\stp)>0$ such that for all $k\in \Z$ with $|k|\ge 1$ we have
		\begin{align}\label{eq:claim1}
			\Pr(\Y_k) \le \Con\exp\left(-\tfrac1\Con k^3\right).
		\end{align}
		Assuming \eqref{eq:claim1}, we observe that by union bound we have
		\begin{align*}
			& \Pr\Big(\sup_{|x|\ge r} \Big(\frac{\mathfrak{h}_s ((t-s)^{\frac{2}{3}} x) - \mathfrak{h}_s (0)}{(t-s)^{\frac{1}{3}}} + \calA(x)-x^2\Big) > \calA(0)\Big) \\ & \le \sum_{k\in \Z, |k|\ge r-1} \left[\Pr(\Y_k)+\Pr(\calA(0) \le -\tfrac12k^2)\right] \\ & \le \sum_{k\in \Z, |k|\ge r-1} \left[\Con\exp\left(-\tfrac1\Con k^3\right)+\Con\exp\left(-\tfrac1\Con k^6\right)\right]  \le \Con\exp\left(-\tfrac1\Con r^3\right).
		\end{align*}
		Hence it suffices to prove \eqref{eq:claim1}. Recall the event $\EE_t^{\mu}(a)$ from \eqref{eq:etma}. Set $\mu:=\frac{k^2}{4(k+1)}$ and $a=(k+1)(t-s)^{2/3}$. By Lemma \ref{lem:argmaxprop} \ref{lem:compare},  under $\EE_t^{\mu}(a)$ we have for $x \in [0, k+1]$,
		\begin{equation*}
			\frac{\mathfrak{h}_s ((t-s)^{\frac{2}{3}} x) - \mathfrak{h}_s (0)}{(t-s)^{\frac13}} \leq \frac{\h_s^{\mu} ((t-s)^{\frac{2}{3}} x) - \h_s^{\mu}(0)}{(t-s)^{\frac13}} \stackrel{d}{=} \B(x)+\mu(t-s)^{1/3}x,  
		\end{equation*}
		where the last equality in distribution is interpreted as processes in $x$ and follows from Lemma \ref{lem:2.3} \ref{2.3c}.
		Hence, 
		\begin{align*}
			\Pr(\Y_k) & \le \Pr\big(\Y_k \cap \EE_s^{\mu}(a)\big)+\Pr\big(\EE_s^{\mu}(a)^c\big) \\ & \le \Pr\Big(\sup_{x\in [k,k+1]} \big(\B(x)+\calA(x)-x^2\big) \ge - \tfrac14k^2\Big)+\Pr\left(\EE_s^{\mu}(a)^c\right) \\ & \le \Pr\Big(\sup_{x\in [k,k+1]} \B(x)\ge  \tfrac14k^2\Big)+\Pr\Big(\sup_{x\in [k,k+1]} \calA(x) \ge \tfrac12k^2\Big)+\Pr\big(\EE_s^{\mu}(a)^c\big).
		\end{align*}
		The first term above is at most $\Con\exp(-\frac1\Con k^3)$ by Brownian tail estimates. By \eqref{eq:ProbBd2}, the second term above is also at most $\Con\exp(-\frac1\Con k^3)$. Thus to show \eqref{eq:claim1} it suffices to prove
		\begin{align}
			\label{eq:claim2}
			\Pr\left(\EE_s^{\mu}(a)^c\right) \le \Con\exp\left(-\tfrac1\Con k^3\right).
		\end{align}
		Towards this end recall the definition of $\EE_s^{\mu}(a)$ from \eqref{eq:etma}. By union bound we have
		\begin{equation}
			\label{e.l.maxbound4}
			\begin{aligned}
				\mathbb{P}(\EE_s^{\mu}(a)^c) & \le  \mathbb{P}\Big(Z_s^{\mu}(-a) \leq Z_s (a;\h_0)\Big)+\Pr\left(Z_s^{-\mu}(a) \ge Z_s(-a;\h_0)\right)\\
				&\leq \mathbb{P}\Big(Z^{0}_s(0) -a+\tfrac12\mu s \leq \tfrac14\mu\Big) +  \mathbb{P}\Big(Z_s (a; \h_0)\geq \tfrac{1}{4}\mu\Big) \\
				& \hspace{2cm}+\mathbb{P}\Big(Z^{0}_s(0) +a-\tfrac12\mu s \geq -\tfrac14\mu\Big) +  \mathbb{P}\Big(Z_s (-a; \h_0)\leq -\tfrac{1}{4}\mu\Big),
			\end{aligned}
		\end{equation}
		where the last line follows from union bound and distributional identities from Lemma \ref{lem:argmaxprop} \ref{3.1a}.
		Let us write $x=a/\mu$, so that 
		Note that $|x| \le 16(t-s)^{2/3}$. We first pick $\rho(\stp)>1$, $\Con(\stp)>0$ from Corollary \ref{lem:exitbound}. Then choose $\til{\rho} \in (1,\rho]$ small enough so that for all $1\le s\le t\le \til\rho$, we have $|x|\le \Con^{-1}$ and $\frac12\mu s-\frac14\mu-x\mu \ge \frac18\mu$. This implies
		\begin{align*}
			\mbox{r.h.s.~of \eqref{e.l.maxbound4}} & \le 2\Pr\left(|Z_s^{0}(0)| \ge \tfrac18\mu\right)+\mathbb{P}\Big(|Z_s (x\mu; \h_0)|\geq \tfrac{1}{4}\mu\Big)+\mathbb{P}\Big(|Z_s (-x\mu; \h_0)|\geq \tfrac{1}{4}\mu\Big) \\ & \le \Con\exp\left(-\tfrac1\Con k^3\right),
		\end{align*}
		proving \eqref{eq:claim2}. This completes the proof.
	\end{proof}

	\section{Probablistic properties of the KPZ fixed point} \label{sec4}
	
	In this section, we investigate several probabilistic properties of the KPZ fixed point $\h_t(x)$ started from an initial data $\h_0$ in the ST-class. We divide this section into three subsections. Section \ref{sec4.1} studies the growth of the spatial profile $\h_t(\cdot)$ for $t$ near $1$. Sections \ref{sec4.2} and \ref{sec4.3} contain proof of several estimates related to spatial and temporal modulus of continuity for the KPZ fixed point.
	
	\subsection{Growth of the spatial profile}\label{sec4.1}
	
	The main result of this section is Proposition~\ref{prop:gc} which studies the growth of spatial profile $\h_t(x)$ for $t$ near $1$ for initial data in ST-class. 
	
	\begin{proposition}[Growth Control]\label{prop:gc} Fix any $\stp\in \R\times \R_{>0}^2$ and
		consider the KPZ fixed point $\h$ started from an initial data $\h_0$ from the \st \ defined in Definition \ref{def:st}. There exist $\tau= \tau (\stp)>0$ and $\Con=\Con(\stp)>0$ such that for all $M\ge1, s>0,$ and $t\in[1,1+\tau]$,
		\begin{align} \label{eq:gc}
			\Pr\Big(\sup_{|x|\le M} \h_{t}(x) \ge f(s,\h_0,M)+s\Big)\le M\cdot \se{3/2} .
		\end{align}
		where 
		\begin{align*}
			f(s,\h_0,M)= \begin{cases}
				0 & \text{ when } \h_0(x)=-\infty\cdot\ind_{x=0}\\
				\af+1 & \text{ when } \h_0 \mbox{ is functional and }\dt>1\\
				\af+ \frac{(1-\dt)}{\dt^2}M^2+1 & \text{ when } \h_0 \mbox{ is functional and }\dt \in (0,1)\\
				s\sqrt{M} & \text{ when } \h_0(x)=\bm(x).
			\end{cases}
		\end{align*}
	\end{proposition}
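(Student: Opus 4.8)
The plan is to prove \eqref{eq:gc} separately for the three sub-classes of Definition~\ref{def:st}, in each case exhibiting a single event of probability at least $1-M\cdot\se{3/2}$ on which $\sup_{|x|\le M}\h_t(x)\le f(s,\h_0,M)+s$ holds deterministically. The common mechanism is the parabolic heuristic $\cl(z,0;x,t)\approx-(z-x)^2/t$: Lemma~\ref{lem:bbv} and \eqref{eq:ProbBd2} supply $\exp(-\tfrac1\Con s^{3/2})$-type tails for the fluctuations of $\cl$ about this parabola, while the parabola itself, combined with the growth assumption on $\h_0$, produces the constant $f$. The window $t\in[1,1+\tau]\subseteq[1,2]$ will only ever be used to turn $t$-dependent prefactors ($t^{\pm1/3},t^{\pm1/2},t^{-2/3}$) into absolute constants; $\tau=\tau(\stp)$ is pinned down in the functional case and is automatically admissible ($\le1$) in the other two.

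\emph{Narrow wedge.} By Lemma~\ref{lem:2.3}(a), $\h_t(x)=\cl(0,0;x,t)\le\ck(0,0;x,t)$ in view of \eqref{def:ck}. The scaling symmetry of Lemma~\ref{lem:sym}\ref{2.1d} with $q=t^{1/3}$ gives $\sup_{|x|\le M}\ck(0,0;x,t)\stackrel{d}{=}t^{1/3}\sup_{|x|\le Mt^{-2/3}}\ck(0,0;x,1)$, so for $t\in[1,2]$,
\[
\Pr\Big(\sup_{|x|\le M}\h_t(x)\ge s\Big)\le\Pr\Big(\sup_{|x|\le M}\ck(0,0;x,1)\ge\tfrac s2\Big).
\]
Covering $[-M,M]$ by the $2\lceil M\rceil+1$ boxes $\mathfrak I_{0,k}=[0,1]\times[k,k+1]$ with $|k|\le M$, noting $\ck(0,0;x,1)\le\sup_{y,z\in\mathfrak I_{0,k}}|\ck(y,0;z,1)|$ for $x\in[k,k+1]$, and applying \eqref{eq:ProbBd2} to each box, a union bound bounds the right side above by $M\cdot\se{3/2}$. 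This is \eqref{eq:gc} with $f\equiv0$.

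\emph{Functional data.} Fix an auxiliary $\delta>0$ (to be chosen together with $\tau$). By Lemma~\ref{lem:bbv} with $v=t$ and $r=M$, the event
\[
G:=\Big\{\cl(z,0;x,t)\le s-\tfrac{(z-x)^2}{t(1+\delta)}\ \text{ for all }z\in\R,\ |x|\le M\Big\}
\]
has $\Pr(G^c)\le\Con(Mt^{-2/3}+1)\exp\big(-\tfrac1\Con s^{3/2}t^{-1/2}\big)\le M\cdot\se{3/2}$ for $t\in[1,2]$, $M\ge1$. On $G$, writing $c:=(t(1+\delta))^{-1}\in(0,1)$ and using $\h_0(z)\le\af+(1-\dt)z^2$,
\[
\h_t(x)=\sup_{z\in\R}\big(\h_0(z)+\cl(z,0;x,t)\big)\le\af+s+\sup_{z\in\R}\big[(1-\dt)z^2-c(z-x)^2\big].
\]
When $\dt+c-1>0$ the last parabola opens downward and its maximum over $z$ equals $\tfrac{c(1-\dt)}{\dt+c-1}x^2$. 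If $\dt\ge1$ this is $\le0$ for every $\tau,\delta$, so $\h_t(x)\le\af+s\le f+s$. If $\dt\in(0,1)$, choose $\tau,\delta$ small enough---depending only on $\dt$---that $c\ge1-\dt+\dt^2$ for all $t\in[1,1+\tau]$ (possible since $c\uparrow1$ as $\tau,\delta\downarrow0$ and $1-\dt+\dt^2<1$); then $\dt+c-1\ge\dt^2$, and since $c\le1$ the maximum is $\le\tfrac{1-\dt}{\dt^2}x^2\le\tfrac{(1-\dt)}{\dt^2}M^2$, whence $\h_t(x)\le\af+\tfrac{(1-\dt)}{\dt^2}M^2+s<f+s$. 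In both cases $\{\sup_{|x|\le M}\h_t(x)\ge f+s\}\subseteq G^c$, giving \eqref{eq:gc}. (The extra additive $1$ in $f$ is never needed; it is harmless slack.)

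\emph{Brownian data.} Here the KPZ fixed point is stationary: Lemma~\ref{lem:2.3}\ref{2.3c} with $\mu=0$ gives $\h_t(\cdot)-\h_t(0)\stackrel{d}{=}\B(\cdot)$ as processes in $x$, and Lemma~\ref{lem:2.3}\ref{2.3b} gives $t^{-1/3}\h_t(0)\stackrel{d}{=}\BR$. Since $\sup_{|x|\le M}\h_t(x)=\h_t(0)+\sup_{|x|\le M}(\h_t(x)-\h_t(0))$, the event $\{\sup_{|x|\le M}\h_t(x)\ge s\sqrt M+s\}$ is contained in $\{\h_t(0)\ge s\}\cup\{\sup_{|x|\le M}(\h_t(x)-\h_t(0))\ge s\sqrt M\}$. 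For the first piece, $\Pr(\h_t(0)\ge s)=\Pr(\BR\ge t^{-1/3}s)\le\Pr(\BR\ge s/2)\le\se{3/2}$ by the Baik--Rains upper tail (Lemma~\ref{lem:onepttail}\ref{2.6c} together with the trivial bound $\Pr(\BR\ge u)\le1$). For the second, stationarity turns it into $\Pr(\sup_{|x|\le M}\B(x)\ge s\sqrt M)$, and the reflection principle applied to $\B|_{[0,M]}$ and $\B|_{[-M,0]}$ (a Brownian motion of diffusion coefficient $2$) bounds it by $4\exp(-\tfrac14 s^2)\le\se{3/2}$. Adding the two pieces and using $M\ge1$ gives \eqref{eq:gc} with $f=s\sqrt M$.

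\emph{Main obstacle.} No step is deep, and most of the work is bookkeeping with the Section~\ref{sec2} estimates under $t\le2$. The one point requiring care is the calibration in the functional case: $\tau$ (the half-width of the time window) and the auxiliary relaxation parameter $\delta$ in Lemma~\ref{lem:bbv} must be taken small enough, \emph{purely as a function of} $\dt$, that the downward parabola $(1-\dt)z^2-c(z-x)^2$ has maximum over $z$ at most $\tfrac{(1-\dt)}{\dt^2}x^2$; this constraint is exactly what determines $\tau=\tau(\stp)$.
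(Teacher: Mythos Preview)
Your proof is correct. The functional case follows the paper's argument essentially line for line (split off the parabola via Lemma~\ref{lem:bbv}, then optimize the deterministic quadratic in $z$); the narrow-wedge case is a mild variant, using \eqref{eq:ProbBd2} directly after scaling instead of the paper's uniform $X_1+X_2$ split with Lemma~\ref{lem:bbv}, which works equally well.

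The genuine difference is in the Brownian case. The paper keeps the same decomposition $\sup_{|x|\le M}\h_t(x)\le X_1+X_2$, where $X_1=\sup_{|x|\le M,\,z\in\R}\big[\bm(z)-\tfrac{(z-x)^2}{1+2\tau}\big]$ carries the initial data and $X_2$ is handled by Lemma~\ref{lem:bbv}; to control $X_1$ it then invokes the Brownian estimate of Lemma~\ref{b_bb} (together with Brownian scaling and the reflection principle) to show $\Pr(X_1\ge s\sqrt M)\le\se{3/2}$. You instead exploit stationarity directly: write $\sup_{|x|\le M}\h_t(x)=\h_t(0)+\sup_{|x|\le M}(\h_t(x)-\h_t(0))$, use Lemma~\ref{lem:2.3}\ref{2.3b} and the Baik--Rains tail for the first piece, and Lemma~\ref{lem:2.3}\ref{2.3c} with the reflection principle for the second. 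Your route is shorter and avoids Lemma~\ref{b_bb} entirely; the paper's route has the virtue of treating all three sub-classes through a single $X_1+X_2$ template. Both are valid.
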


	To prove Proposition \ref{prop:gc} for Brownian initial data we need a technical estimate which we state below.
	
	\begin{lemma}\label{b_bb}  There exist universal constants $\Con>0$ such that for all $r,s>0$ we have
		\begin{align*}
			\Pr\left(\sup_{y\in \R, |z|\le r} \left[ \B(y)-\B(z)-\tfrac{(y-z)^2}{4}\right] \ge s\right) \le (r+1)\cdot \se{3/2}
		\end{align*}
		where $\B(x)$ is a two-sided Brownian motion with diffusion coefficient $2$.
	\end{lemma}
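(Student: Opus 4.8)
The plan is to bound the probability by a union bound over a unit grid in both the $y$ and the $z$ variables, exploiting the parabolic penalty $-\tfrac14(y-z)^2$ twice: to kill the contribution of cells far from the diagonal, and to keep the sum over the $z$-grid down to a factor proportional to $r$. Since the right-hand side $(r+1)\se{3/2}$ exceeds $1$ whenever $s$ stays in a bounded range and $\Con$ is large (using $r+1\ge 1$), we may and do assume $s\ge 1$. Cover $\{|z|\le r\}$ by the unit intervals $[k,k+1]$, $k\in\Z$, that meet $[-r,r]$ — there are at most $2r+3\le 3(r+1)$ of them — and cover the $y$-line by $[m,m+1]$, $m\in\Z$. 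On the cell $[m,m+1]\times[k,k+1]$ one has $(y-z)^2\ge m_{k,m}^2:=\big((|m-k|-1)_+\big)^2$, so a union bound yields
\begin{align*}
	\Pr\Big(\sup_{y\in\R,\,|z|\le r}\big[\B(y)-\B(z)-\tfrac{(y-z)^2}{4}\big]\ge s\Big)\le\sum_{k}\sum_{m\in\Z}\Pr\Big(\sup_{\substack{y\in[m,m+1]\\ z\in[k,k+1]}}\big[\B(y)-\B(z)\big]\ge s+\tfrac14 m_{k,m}^2\Big).
\end{align*}

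Next I would estimate each cell probability. Writing $n:=|m-k|$ and splitting $\B(y)-\B(z)=\big(\B(y)-\B(m)\big)+\big(\B(m)-\B(k)\big)+\big(\B(k)-\B(z)\big)$, the two outer suprema over unit intervals are each distributed as $|N(0,2)|$ by the reflection principle, while $\B(m)-\B(k)\sim N(0,2n)$; bounding the event by the union of the events that one of the three pieces exceeds a third of the total, Gaussian tail bounds give a constant $\Con$ with
\begin{align*}
	\Pr\Big(\sup_{\substack{y\in[m,m+1]\\ z\in[k,k+1]}}\big[\B(y)-\B(z)\big]\ge v\Big)\le\Con\exp\Big(-\tfrac{v^2}{\Con(n+1)}\Big)\qquad(v>0).
\end{align*}
Taking $v=s+\tfrac14 m_{k,m}^2$, an elementary case analysis — comparing $n^2$ with $s$, and using $m_{k,m}^2\ge\tfrac14 n^2$ for $n\ge2$ — shows $\tfrac{v^2}{n+1}\ge\tfrac1{\Con}\big(s^{3/2}+n^3\big)$ for all $s\ge1$ and $n\ge0$.

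Finally I would sum. Combining the last three displays, the probability in question is at most
\begin{align*}
	\sum_{k}\sum_{m\in\Z}\Con\exp\Big(-\tfrac1{\Con}\big(s^{3/2}+|m-k|^3\big)\Big)\le 3(r+1)\,\Con\exp\Big(-\tfrac1{\Con}s^{3/2}\Big)\sum_{n\in\Z}\exp\Big(-\tfrac1{\Con}n^3\Big),
\end{align*}
and since the last series is a finite constant, adjusting $\Con$ gives the desired bound $(r+1)\se{3/2}$.

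The only point requiring care is that the sum over the $z$-grid should cost just an $O(r+1)$ factor: this is guaranteed precisely by carrying the full penalty $\tfrac14 m_{k,m}^2$ — rather than discarding it or replacing it crudely by $\tfrac14 r^2$ — when passing to the cells, so that the $s^{3/2}$ decay rate is preserved uniformly in $r$, including in the regime $s\ll r^2$ where a crude bound would fail. The reflection-principle and Gaussian-tail inputs, and the elementary inequality $\tfrac{v^2}{n+1}\ge\tfrac1\Con(s^{3/2}+n^3)$, are routine.
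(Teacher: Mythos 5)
Your proof is correct, and the overall architecture — a unit-grid decomposition in $(y,z)$, a cell-level probability estimate, and a summation in which the parabolic penalty produces both the $O(r+1)$ prefactor and the $e^{-s^{3/2}/\Con}$ rate — is the same as in the paper. The genuine difference lies in the cell estimate. The paper splits the supremum over a cell into the increment at the corner $(k_1,k_2)$ (a single Gaussian with variance $2|k_1-k_2|$) plus the oscillation of the two-parameter field $\mathfrak{F}(y,z)=\B(y)-\B(z)$ around that corner, and controls the latter with the chaining Lemma 3.3 of \cite{dv21} to obtain the full Gaussian decay $\exp(-\tfrac1\Con(4s+m^2)^2)$ for the oscillation term. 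You instead telescope $\B(y)-\B(z)$ through the grid points $m$ and $k$ into three one-parameter pieces, control the two endpoint oscillations by the reflection principle and the middle increment by a direct Gaussian tail, and merge everything into the single bound $\Con\exp(-v^2/(\Con(n+1)))$. Your version is more elementary and self-contained — it avoids importing the chaining lemma — at the cost of being slightly less sharp for small $n$, which you compensate for cleanly by first reducing to $s\ge 1$ (a reduction the paper leaves implicit). Your case analysis establishing $\tfrac{v^2}{n+1}\ge\tfrac1\Con(s^{3/2}+n^3)$ is the right elementary substitute for the paper's integral comparison $\int_\R \exp(-\tfrac1\Con(s+x^2)^2/|x|)\,\d x\le\Con\exp(-\tfrac1\Con s^{3/2})$, and both yield the same summability.
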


	The proof of the above Lemma follows from standard Brownian calculations and hence deferred to the appendix.
	
	\begin{proof}[Proof of Proposition \ref{prop:gc}] We set $\tau:=\tfrac12\min(\dt,1)$. We start with noting that $\sup_{|x|\le M} \h_v(x)$ is bounded above by $X_1+X_2$ where 
		\begin{align*}
			X_1:= \sup_{|x|\le M,z\in \R}\Big[\h_0(z)-\tfrac{(z-x)^2}{1+2\tau}\Big], &\quad  X_2:=\sup_{|x|\le M,z\in \R}\Big[\cl(z,0;x,t)+\tfrac{(z-x)^2}{1+2\tau}\Big].
		\end{align*}	
		By the union bound, we write 
		\begin{equation}\label{eq:2parts}
			\Pr\Big(\sup_{|x|\le M} \h_{t}(x) \ge f(s,\h_0,M)+s\Big)\leq \Pr(X_1\geq f(s,\h_0, M))+ \Pr(X_2\geq s).
		\end{equation}
		We now proceed to bound $\Pr(X_2\geq s)$. Observe that if $\delta:=\frac{\tau}{1+\tau}$, then, we have $t(1+\delta)\le 1+2\tau$ for any $t\in [1,1+\tau]$. Applying Lemma \ref{lem:bbv} for any $t\in [1,1+\tau]$ shows 
		\begin{equation}
			\label{eq:dlpart}
			\Pr(X_2\geq s)  \le \Pr\Big(\sup_{|x|\le M,z\in \R}\big[\cl(z,0;x,t)+\tfrac{(z-x)^2}{t(1+\delta)}\big] \ge s\Big)  \le M\cdot \Con \exp(-\tfrac1{\Con}s^{3/2}).
		\end{equation}
		Let us now consider the first term in r.h.s.~of \eqref{eq:2parts}. Note that $X_1$ is deterministic when $\h_0$ is deterministic. Thus the first term in the r.h.s. of \eqref{eq:2parts} will either be $1$ or $0$. When $\h_0$ is narrow wedge initial data, the first term is trivially zero. When $\h_0(x)$ is functional data with $\dt>1$, $X_1$ is atmost $A$ and hence, the first term of the r.h.s. of \eqref{eq:2parts} is zero. When $\dt\in (0,1)$ for all $z\in \R$ and $|x|\le M$, we have 
		\begin{align*}
			\h_0(z)-\tfrac{(z-x)^2}{1+\dt} \le A-\frac{\dt^2z^2-2zx+x^2}{1+\dt} \le A+\frac{1-\dt}{\dt^2}x^2 \le A+\frac{1-\dt}{\dt^2}M^2 < f(s,\h_0,M).
		\end{align*}
		This shows once again that $\Pr(X_1\geq f(s,\h_0, M))$ is zero when $\dt\in (0,1)$. This completes the proof of the proposition for non-random data. 
		
		\medskip
		
		Now we consider the case when $\h_0(z)=\bm(z)$. Set $\gamma:=\frac{1+2\tau}{4}$. By the scaling invariance (i.e., $\mathfrak{B}_{r^2x}\stackrel{d}{=} r \mathfrak{B}_x$) of the law of Brownian motion, we have
		\begin{align}\label{eq:inv}
			\sup_{|x|\le M,z\in \R}\Big[\bm(z)-\frac{(z-x)^2}{4\gamma}\Big]\stackrel{d}{=} \gamma^{1/3}\sup_{|x|\le M\gamma^{-2/3},z\in \R}\Big[\bm(z)-\frac{(z-x)^2}{4}\Big]. 
		\end{align}
		By the triangle inequality,
		\begin{align*}
			\sup_{|x|\le M\gamma^{-2/3},z\in \R}\big[\bm({z})-\frac{(z-x)^2}{4}\big] \le \sup_{|x|\le M\gamma^{-2/3},z\in \R}\big[\bm(z)-\bm(x)-\frac{(z-x)^2}{4}\big]+\sup_{|x|\le M\gamma^{-2/3}} \bm(x).
		\end{align*}

		Note that $\gamma\in[\frac14,\frac12]$. Applying Lemma \ref{b_bb} with $r\mapsto M\gamma^{-2/3}$ and $s\mapsto \frac{s}{2\gamma^{1/3}}\sqrt{M}$, we get
		\begin{align*}
			\Pr\Big(\gamma^{1/3}\sup_{|x|\le M\gamma^{-2/3},z\in \R}\big[\bm(z)-\bm(x)-\tfrac{(z-x)^2}{4}\big] \ge \tfrac{s}{2}\sqrt{M}\Big) & \le \Con (M+1)\exp(-\tfrac1{\Con}s^{3/2}M^{3/4}) \\ & \le \Con \exp(-\tfrac1{\Con}s^{3/2}),
		\end{align*}
		and by the reflection principle of Brownian motion, we have 
		\begin{align*}
			\Pr\Big(\gamma^{1/3} \sup_{|x|\le M\gamma^{-2/3}} \bm({x}) \ge \tfrac{s}{2}\sqrt{M}\Big) \le \Con\exp(-\tfrac1{\Con}s^{2}).
		\end{align*}
		Thus, by the union bound, in view of \eqref{eq:inv} and using the last two probability estimates, we get
		\begin{align*}
			\Pr\Big(\sup_{|x|\le M,z\in \R}\big[\bm(z)-\tfrac{(z-x)^2}{1+\tau}\big] \ge s\sqrt{M}\Big) \le \se{3/2}.
		\end{align*}
		Note that this bounds the first term on the right hand side of \eqref{eq:2parts}. Combining this with \eqref{eq:dlpart}, we get \eqref{eq:gc} for Brownian initial data completing the proof.
	\end{proof}

	\subsection{Spatial modulus of continuity}\label{sec4.2}
	
	The main goal of this section is to investigate the spatial modulus of continuity of the KPZ fixed point: Proposition \ref{prop:mc}. This requires a detailed study of the tail
	probabilities for difference of the KPZ fixed point at two distinct spatial point. This is done in Proposition \ref{prop:tail_modulus} below. Proposition \ref{prop:mc} then follows from Proposition \ref{prop:tail_modulus} by standard analysis.

	\begin{proposition}\label{prop:tail_modulus} Fix any $\stp\in \R\times \R_{>0}^2$ and
		consider the KPZ fixed point $\h$ started from an initial data $\h_0$ from the \st \ defined in Definition \ref{def:st}. There exists a constant $\Con(\stp)>0$ such that for all $x \neq z \in [-1, 1]$,
		\begin{equation}\label{eq:tail_modulus}
			\mathbb{P}\bigg(\frac{|\mathfrak{h}_1 (x) - \mathfrak{h}_1 (z)|}{\sqrt{|x-z|\log \frac4{|x-z|}}} \geq s\bigg) \leq \se{3/2}.
		\end{equation}
	\end{proposition}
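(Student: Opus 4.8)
The plan is to decompose the height difference $\mathfrak{h}_1(x) - \mathfrak{h}_1(z)$ into an initial-data contribution and a directed-landscape contribution, using the variational formula \eqref{def:kpzfp2}, and then control each piece separately using the argmax localization from Section~\ref{sec3} together with the spatial tail estimate of Lemma~\ref{lem:scont}. The key point is that once we restrict the supremum in \eqref{def:kpzfp2} to a compact window $|w| \le r$ (which costs at most $\Con\exp(-\frac1\Con r^3)$ by Corollary~\ref{lem:exitbound} and Remark~\ref{rm:anymax}), the difference $\mathfrak{h}_1(x) - \mathfrak{h}_1(z)$ is bounded in absolute value by $\sup_{|w|\le r} |\cl(w,0;x,1) - \cl(w,0;z,1)|$, since the initial data $\h_0(w)$ cancels out. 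In terms of the stationary landscape $\ck$ from \eqref{def:ck} this difference is $\ck(w,0;x,1) - \ck(w,0;z,1)$ plus the deterministic parabolic correction $\frac{(w-z)^2}{1} - \frac{(w-x)^2}{1}$, whose absolute value on $|w|\le r$, $|x|,|z|\le 1$ is bounded by $\Con r |x-z|$, which is negligible for the modulus of continuity at scale $\sqrt{|x-z|\log\frac{4}{|x-z|}}$ as long as $r$ is not too large.

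First I would fix $r$ and write, for any event on which all maximizers lie in $[-r,r]$,
\begin{align*}
|\mathfrak{h}_1(x) - \mathfrak{h}_1(z)| \le \sup_{|w|\le r} |\cl(w,0;x,1) - \cl(w,0;z,1)| \le \sup_{|w|\le r} |\ck(w,0;x,1) - \ck(w,0;z,1)| + \Con r|x-z|.
\end{align*}
Next, to handle the random part, I would cover $[-r,r]$ by unit boxes and apply \eqref{eq:ProbBd3} of Lemma~\ref{lem:scont} (the Hölder-type estimate with the $\sqrt{|x-z|\log\frac{4}{|x-z|}}$ normalization) box by box; the union over $O(r)$ boxes contributes a factor of $r$ in front of $\se{3/2}$, so that
\begin{align*}
\Pr\Big(\sup_{|w|\le r} |\ck(w,0;x,1)-\ck(w,0;z,1)| \ge \tfrac{s}{2}\sqrt{|x-z|\log\tfrac{4}{|x-z|}}\Big) \le \Con r\, \se{3/2}.
\end{align*}
Then I would choose $r$ as a function of $s$ — a modest power of $s$, say $r = s^{2/3}$ (enough so that $\Con r^3$ dominates $\frac1\Con s^{3/2}$ up to constants, while keeping $r|x-z|$ negligible relative to $\sqrt{|x-z|\log\frac4{|x-z|}}$ uniformly in $|x-z|\le 2$) — and combine the three contributions: the argmax-exit probability $\Con\exp(-\frac1\Con r^3)$, the landscape-difference probability $\Con r\,\se{3/2}$, and the deterministic term, which is below the threshold once $s$ is large. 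A union bound over finitely many pieces then yields \eqref{eq:tail_modulus} for $s$ large; absorbing small $s$ into the constant $\Con$ completes the proof.

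The main obstacle I anticipate is the bookkeeping needed to make the localization uniform over all pairs $x\ne z$ in $[-1,1]$ simultaneously with the $s$-dependent choice of window $r$: one must ensure that the radius $r$ can be taken large enough that the argmax-exit cost $\exp(-\frac1\Con r^3)$ beats $\exp(-\frac1\Con s^{3/2})$, yet small enough that the parabolic correction $\Con r|x-z|$ never eats into the $\tfrac{s}{2}\sqrt{|x-z|\log\frac4{|x-z|}}$ budget — and this must hold down to $|x-z|$ arbitrarily small, where $\sqrt{|x-z|\log\frac4{|x-z|}}$ still dominates any fixed multiple of $|x-z|$, so the constraint is really on the interplay between $r$ and $s$, not on $|x-z|$. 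A secondary technical point is that Corollary~\ref{lem:exitbound} is stated for $\mathfrak{h}_1(xr)$ with $|x|\le \Con^{-1}$; since here the spatial arguments $x,z$ are in a fixed bounded interval $[-1,1]$ rather than scaling with $r$, I would apply the corollary with its spatial argument equal to the fixed points $x$ and $z$ (rescaling $r$ appropriately, i.e. taking the "$x$" in the corollary to be $x/r$ which is $\le \Con^{-1}$ once $r$ is large), so that the exit estimate still applies; this is routine but worth stating carefully.
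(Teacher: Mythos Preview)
Your proposal is correct and follows essentially the same route as the paper: localize the argmax to a window $|w|\le r$ via Corollary~\ref{lem:exitbound}, bound the resulting difference by $\sup_{|w|\le r}|\ck(w,0;x,1)-\ck(w,0;z,1)|$ plus the parabolic correction $O(r|x-z|)$, and control the random part with \eqref{eq:ProbBd3}. The paper takes $r=\sqrt{s}$ (so that the exit cost $\exp(-\tfrac1\Con r^3)$ exactly matches the target $\exp(-\tfrac1\Con s^{3/2})$) rather than your $r=s^{2/3}$, and applies \eqref{eq:ProbBd3} directly since it already carries the factor $R$ for $|y|\le R$, but these are cosmetic differences.
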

	\begin{proof} Fix any $\h_0$ in \st. Let $v:=\sup_{r\in (0,2]} \sqrt{r\cdot \log^{-1} \frac4r}$. Assume $s$ is large enough so that $s-2v(\sqrt{s}+1)\ge \frac12s$.  Let us set
		\begin{align*}
			\til{\h}_1(\cdot):=\sup_{|y|\le \sqrt{s}} (\h_0(y)+\cl(y,0,\cdot,1)).
		\end{align*}
		We first establish the statement of the Proposition for $\til{\h}_1(\cdot)$.  
		Observe that
		\begin{align*}
			\big|\til{\h}_1 (x) - \til{\h}_1 (z)\big| & \leq \sup_{|y| \le \sqrt{s}} \left|\cl(y,0,x,1)-\cl(y,0,z,1)\right|\\
			&\leq \sup_{|y| \le \sqrt{s}}\left|\ck(y,0,x,1)-(x-y)^2-\ck(y,0,z,1)+(y-z)^2\right|\\ 
			&\leq \sup_{|y| \le \sqrt{s}}\left|\ck(y,0,x,1)-\ck(y,0,z,1)\right| + 2 |x-z| (\sqrt{s}+1). 
		\end{align*}
		By the choice of $s$ we thus have
		\begin{align}
			\notag & \mathbb{P}\bigg(\frac{|\til{\h}_1 (x) - \til{\h}_1 (z)|}{|x-z|^{\frac{1}{2}} (\log |x-z|^{-1})^{\frac{1}{2}}} \geq s\bigg) \\ \notag & \le \Pr\left(\sup_{|y| \le \sqrt{s}}\frac{\left|\ck(y,0,x,1)-\ck(y,0,z,1)\right| + 2 |x-z| (\sqrt{s} + 1)}{|x-z|^{\frac{1}{2}} (\log |x-z|^{-1})^{\frac{1}{2}}}\ge s\right) \\ & \le \Pr\left(\sup_{|y| \le \sqrt{s}}\frac{\left|\ck(y,0,x,1)-\ck(y,0,z,1)\right| }{|x-z|^{\frac{1}{2}} (\log |x-z|^{-1})^{\frac{1}{2}}}\ge \tfrac12s\right). \label{eq:twosdif}
		\end{align}
		By \eqref{eq:ProbBd3} we have
		\begin{align}
			\mbox{r.h.s.~of \eqref{eq:twosdif}} \le \Con \sqrt{s}\exp\left(-\tfrac1\Con s^{2}\right) \le \se{3/2}. \label{eq:twosdif2}
		\end{align}
		This proves \eqref{eq:tail_modulus} for $\til{\h}_1(\cdot)$. To extend it to $\h_1(\cdot)$ we utilize the estimate from Section \ref{sec3} for the argmax location of the variational problem of the KPZ fixed point. Recall $Z_1 (\cdot ;\h_0)$ from \eqref{def:ztx}.  By union bound
		\begin{align*}
			\mathbb{P}\bigg(\frac{|\mathfrak{h}_1 (x) - \mathfrak{h}_1 (z)|}{|x-z|^{\frac{1}{2}} (\log \frac4{|x-z|})^{\frac{1}{2}}} \geq s\bigg) & \leq \mathbb{P}\big(|Z_1 (x)| \geq \sqrt{s}\big) + \mathbb{P}\big(|Z_1 (z)| \geq \sqrt{s}\big)+ \mathbb{P}\bigg(\frac{|\til{\h}_1 (x) - \til{\h}_1 (z)|}{|x-z|^{\frac{1}{2}} (\log \frac4{|x-z|})^{\frac{1}{2}}} \geq s\bigg).
		\end{align*}
		Applying Corollary \ref{lem:exitbound} with $x \mapsto x s^{-1/2}, r\mapsto \sqrt{s}$ and $x \mapsto z s^{-1/2}, r\mapsto \sqrt{s}$ we know $\mathbb{P}\big(|Z_1 (x)| \geq \sqrt{s}\big)$ and $\mathbb{P}\big(|Z_1 (z)| \geq \sqrt{s}\big)$ are at most $\se{3/2}$. Appealing to these estimates and the bound from \eqref{eq:twosdif2} we see that the r.h.s.~of the equation is at most $\se{3/2}$. This concludes the proof.
	\end{proof}

	\begin{proposition}\label{prop:mc} 
		Fix any $\stp\in \R\times \R_{>0}^2$ and
		consider the KPZ fixed point $\h$ started from an initial data $\h_0$ from the \st \ defined in Definition \ref{def:st}. There exists a constant $\Con(\stp)>0$ such that 
		\begin{equation*}
			\mathbb{P}\Big(\sup_{x, z \in [-1,1]} \frac{|\sh_1 (x) - \sh_1 (z)|}{\sqrt{|x-z|} \cdot \log^2\frac{4}{|x-z|}} \ge s\Big) \leq \se{3/2}.
		\end{equation*}
	\end{proposition}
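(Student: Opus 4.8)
The plan is to upgrade the one-point tail bound of Proposition~\ref{prop:tail_modulus} to the uniform statement by a standard L\'evy-type dyadic chaining argument; Proposition~\ref{prop:tail_modulus} carries essentially all the probabilistic content, so the remaining work is combinatorial. First I would set, for $n\ge 0$, $\calD_n:=\{-1+j2^{-n}:0\le j\le 2^{n+1}\}$ for the level-$n$ dyadic grid of $[-1,1]$ and
\begin{align*}
	K_n:=\max_{0\le j<2^{n+1}}\bigl|\h_1\bigl(-1+(j+1)2^{-n}\bigr)-\h_1\bigl(-1+j2^{-n}\bigr)\bigr|.
\end{align*}
Each consecutive pair is at distance $2^{-n}$, so a union bound over the $2^{n+1}$ pairs together with Proposition~\ref{prop:tail_modulus} (using $\log(4\cdot 2^n)=(n+2)\log 2$) gives, for every $\lambda>0$,
\begin{align*}
	\Pr(K_n\ge\lambda)\le 2^{n+1}\,\Con\exp\Bigl(-\tfrac1\Con\bigl(\lambda\,2^{n/2}(n+2)^{-1/2}\bigr)^{3/2}\Bigr).
\end{align*}

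Next I would fix thresholds $\lambda_n:=s\,2^{-n/2}(n+2)^2$. With this choice $\lambda_n\,2^{n/2}(n+2)^{-1/2}$ equals a constant multiple of $s(n+2)^{3/2}$, so the exponent above becomes a constant multiple of $s^{3/2}(n+2)^{9/4}$; since $(n+2)^{9/4}\ge n+2$ and $2^{n+1}=e^{(n+1)\log 2}$, for all $s$ above a fixed $s_0$ the entropy factor $2^{n+1}$ is absorbed and summing the (essentially geometric) series over $n$ yields $\Pr\bigl(\bigcup_{n\ge 0}\{K_n\ge\lambda_n\}\bigr)\le\se{3/2}$. The weight $(n+2)^2$ is doing two jobs at once: its power must exceed $\tfrac76$ so that the $3/2$-power tail beats the dyadic entropy $2^n$, while the tail of the geometric sum, $\sum_{n>N}2^{-n/2}(n+2)^2\le\Con\,2^{-N/2}(N+2)^2$, is precisely what turns (via $N+2\le\Con\log\tfrac4{|x-z|}$) into the $\log^2$ normalization in the statement; the value $2$ is a convenient, non-sharp choice.

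Finally I would run the chaining on the good event $\mathsf G:=\bigcap_{n\ge 0}\{K_n<\lambda_n\}$. By continuity of $\h_1$, writing $x^{(n)}:=-1+\lfloor(x+1)2^n\rfloor 2^{-n}\in\calD_n$ we have $x^{(n)}\uparrow x$ and $|x^{(n+1)}-x^{(n)}|\in\{0,2^{-n-1}\}$, so on $\mathsf G$ one gets $|\h_1(x)-\h_1(x^{(N)})|\le\sum_{n>N}K_n$ for every $N$. Given $x\neq z$ in $[-1,1]$ with $|x-z|<\tfrac12$, pick $N\ge 1$ with $2^{-N-1}<|x-z|\le 2^{-N}$; then $x^{(N)}$ and $z^{(N)}$ lie a bounded number of $\calD_N$-steps apart, so $|\h_1(x^{(N)})-\h_1(z^{(N)})|\le\Con K_N$, and on $\mathsf G$
\begin{align*}
	|\h_1(x)-\h_1(z)|\le \Con K_N+2\sum_{n>N}K_n\le \Con\lambda_N+2\sum_{n>N}\lambda_n\le \Con\,s\,2^{-N/2}(N+2)^2\le \Con\,s\,\sqrt{|x-z|}\,\log^2\tfrac4{|x-z|},
\end{align*}
using $2^{-N/2}\le\sqrt2\,\sqrt{|x-z|}$; the range $|x-z|\ge\tfrac12$ is handled directly from the bound $\sup_{|x|\le1}|\h_1(x)-\h_1(0)|\le\Con s$, which also holds on $\mathsf G$. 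Rescaling $s$ by a constant, and noting the asserted inequality is trivial when $s$ is below a fixed threshold, completes the proof. I do not anticipate a genuine obstacle here: the only delicate point is the balancing of the threshold sequence $\lambda_n$ in the second paragraph, and that is routine arithmetic.
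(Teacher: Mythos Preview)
Your proposal is correct and follows essentially the same approach as the paper: both arguments run a L\'evy-type dyadic chaining, bound the level-$n$ maximal increment via a union bound and Proposition~\ref{prop:tail_modulus}, and use the weight $(n+2)^2$ (equivalently your thresholds $\lambda_n=s\,2^{-n/2}(n+2)^2$) so that the resulting exponent $s^{3/2}(n+2)^{9/4}$ beats the $2^{n+1}$ entropy and sums. The only cosmetic difference is packaging: the paper encodes your good event $\mathsf G$ as $\{\norm{X}<s\}$ with $\norm{X}:=\sup_n X_n 2^{n/2}/(n+2)^2$ and records the chaining as a single deterministic inequality, whereas you keep the thresholds explicit; the probabilistic and combinatorial content are identical.
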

	\begin{proof}
		The proof proceeds by mimicking Levy's proof of the modulus of continuity. Set $\Lambda_n := \{2^{-n} i: i \in [-2^n,2^n-1]\cap \Z\}$ and define
		$$X_n := \sup_{x \in \Lambda_n} |\sh_1(x+2^{-n}) - \sh_1(x)|, \quad \norm{X}:=\sup_{n\ge 0}  \frac{X_n\cdot 2^{n/2}}{(n+2)^2}.$$ 
		For any $-1\le x < z\le 1$ observe that the following string of inequalities  holds deterministically
		\begin{align*}
			|\h_1(x)-\h_1(z)| & \le \sum_{n=0}^{\infty}  \left|\h_1({2^{-n}\lfloor {2^nx}\rfloor})-\h_1({2^{-n+1}\lfloor {2^{n-1}x}\rfloor})+\h_1({2^{-n}\lfloor {2^nz}\rfloor})-\h_1({2^{-n+1}\lfloor {2^nz}\rfloor})\right| \\ & \le \sum_{n=0}^{\infty} 2\left({|x-z|}2^n\wedge 2\right)X_n \\ & \le \norm{X}\sum_{n=0}^{\infty} (n+1)^22^{-n/2}\left({|x-z|}2^n\wedge 2\right) \le \theta\norm{X} \cdot |x-z|^{1/2}\log^2 \tfrac{4}{|x-z|}.
		\end{align*}
		where $\theta>0$ is an absolute constant. Hence,
		\begin{align}
			\Pr\left(\sup_{x, z \in [-1,1]} \frac{|\sh_1 (x) - \sh_1 (z)|}{\sqrt{|x-z|} \cdot \log^2\frac{4}{|x-z|}} \ge s \right) \le \Pr(\norm{X} \ge \tfrac1\theta s). \label{eq:mod}
		\end{align}
		However by union bound
		\begin{align*}
			\Pr\left(\norm{X} \ge s\right) & \le \sum_{n=0}^{\infty}\sum_{x\in \Lambda_n} \Pr\left(|\sh_1(x+2^{-n}) - \sh_1(x)| \ge s 2^{-n/2}(n+2)^2\right) \\ & \le \sum_{n=0}^{\infty} \Con \cdot 2^{n+1}\exp\left(-\tfrac1\Con s^{3/2} (n+2)^{9/4}\right) \le \se{3/2},
		\end{align*}
		where the penultimate inequality follows by applying Proposition \ref{prop:tail_modulus} to each increment. Adjusting the $\Con>0$ further we see that the above display implies $\mbox{r.h.s.~of \eqref{eq:mod}} \le \se{3/2}$. This completes the proof.
	\end{proof}

	\subsection{Temporal Modulus of Continuity} \label{sec4.3} We now study the KPZ fixed point in the temporal direction with spatial location fixed at $x=0$.
	
	\begin{proposition}\label{prop:stincrementtail} Fix any $\stp\in \R\times \R_{>0}^2$ and
		consider the KPZ fixed point $\h$ started from an initial data $\h_0$ from the \st \ defined in Definition \ref{def:st}. There exist constants $\rho(\stp)>1$ and $\Con(\stp)>0$ such that for all $1 \leq s < t \leq \rho$, we have 
		\begin{equation*}
			\P\bigg(\frac{|\h_t(0) - \h_{s} (0)|}{{(t - s)}^{\frac{1}{3}}} \geq r\bigg) \leq \Con \exp\big(-(\tfrac{2}{3} - \e) r^\frac{3}{2}\big).
		\end{equation*}
	\end{proposition}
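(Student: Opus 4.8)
\emph{Overall plan.} I would bound the two tails of $\h_t(0)-\h_s(0)$ separately, the lower tail being easy and the upper tail being the real content. Write $\Delta:=t-s$. By the metric composition law \eqref{metlaw} and the scale invariance of $\cl$, the rescaled increment $\nabla:=\Delta^{-1/3}(\h_t(0)-\h_s(0))$ equals in distribution $\sup_{x\in\R}g(x)$ with $g(x):=\Delta^{-1/3}(\h_s(\Delta^{2/3}x)-\h_s(0))+\calA(x)-x^2$, where $\calA$ is a stationary $\operatorname{Airy}_2$ process independent of $\h_s$ (independence holds since $\h_s$ is measurable with respect to $\h_0$ and $\cl|_{[0,s]}$, while the relevant landscape increment lives on $[s,t]$). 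For the lower tail I would just keep $x=0$: $\h_t(0)-\h_s(0)\ge\cl(0,s;0,t)\stackrel{d}{=}\Delta^{1/3}\tw$, so $\P(\nabla\le -r)\le\P(\tw\le -r)\le\Con\exp(-\tfrac1\Con r^3)$ by the Tracy--Widom GUE lower tail \cite{ramirez2011beta}, which since $r^3\gg r^{3/2}$ is at most $\Con\exp(-(\tfrac23-\e)r^{3/2})$ (trivially so for small $r$ after enlarging $\Con$); this works for every initial data in the \st. For the upper tail with Brownian data $\h_0=\B$, Lemma \ref{lem:2.3} \ref{2.3c} (with $\mu=0$) gives $\h_s(\cdot)-\h_s(0)\stackrel{d}{=}\B(\cdot)$ as a process, independent of the landscape increment, so Brownian scaling gives $\nabla\stackrel{d}{=}\sup_x[\B(x)+\calA(x)-x^2]=\BR$ and Lemma \ref{lem:onepttail} \ref{2.6c} closes this case.

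\emph{Upper tail for narrow wedge and functional data.} This is the main case, and I would run the scheme sketched in Section \ref{sec1.1}. Since $g(0)=\calA(0)$, applying Lemma \ref{lem:maxbound} with its spatial parameter set to $b\sqrt r$ shows that, off an event of probability $\le\Con\exp(-\tfrac1\Con b^3 r^{3/2})$, the supremum defining $\nabla$ is attained inside $[-b\sqrt r,b\sqrt r]$; picking the absolute constant $b$ large enough makes this error $\le\exp(-(\tfrac23-\e)r^{3/2})$. It then remains to bound $\P(\sup_{x\in[0,b\sqrt r]}g(x)\ge r)$ (the half $[-b\sqrt r,0]$ being symmetric). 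I would set $a:=\Delta^{2/3}b\sqrt r$, $\mu:=\Delta^{-1/4}\sqrt r$, and recall the comparison event $\EE_s^\mu(a)$ from \eqref{eq:etma} and the drifted maximizers \eqref{def:zux}. On $\EE_s^\mu(a)$, Lemma \ref{lem:argmaxprop} \ref{lem:compare} gives $\h_s(x)-\h_s(0)\le\h_s^\mu(x)-\h_s^\mu(0)$ for $x\in[0,a]$, and $\h_s^\mu(\cdot)-\h_s^\mu(0)\stackrel{d}{=}\B(\cdot)+\mu(\cdot)$ (jointly with $\calA$) by Lemma \ref{lem:2.3} \ref{2.3c}; rescaling by $\Delta^{-1/3}$ and using Brownian scaling, the drift accrued over $x\in[0,b\sqrt r]$ is at most $\Delta^{1/3}\mu b\sqrt r=\Delta^{1/12}b\,r$, so $\P(\{\sup_{x\in[0,b\sqrt r]}g(x)\ge r\}\cap\EE_s^\mu(a))\le\P(\BR+\Delta^{1/12}b\,r\ge r)$ with $\BR=\sup_x[\B(x)+\calA(x)-x^2]$, $\B$ independent of $\calA$. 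Taking $\rho$ close enough to $1$ that $\Delta^{1/12}b$ is as small as I like, the Baik--Rains upper tail (Lemma \ref{lem:onepttail} \ref{2.6c}, \cite{ferrari2021upper}) then bounds this by $\exp(-(\tfrac23-\e)r^{3/2})$.

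\emph{The comparison event and wrap-up.} Finally I would bound $\P(\EE_s^\mu(a)^c)$ exactly as in \eqref{e.l.maxbound4}: it is at most a sum of four exit probabilities for $Z_s^{\pm\mu}(\mp a)$ and $Z_s(\pm a;\h_0)$. Using the distributional identities of Lemma \ref{lem:argmaxprop} \ref{3.1a} (so $Z_s^{\pm\mu}(\mp a)\stackrel{d}{=}Z_s^0(0)\mp a\pm\tfrac12\mu s$, with $\tfrac12\mu s\ge\tfrac12\mu$ dominating $a$ once $\rho-1$ is small) and Corollary \ref{lem:exitbound} (whose ``$x$''-parameters are below $\Con^{-1}$ for $\rho$ close to $1$, since $a/\mu=\Delta^{11/12}b$ is small), each of the four terms is at most $\Con\exp(-\tfrac1\Con\mu^3)$; since $\mu^3=\Delta^{-3/4}r^{3/2}\ge(\rho-1)^{-3/4}r^{3/2}$, shrinking $\rho-1$ makes this $\le\exp(-(\tfrac23-\e)r^{3/2})$. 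Summing the three contributions to the upper tail of $\nabla$, repeating the symmetric estimate on $[-b\sqrt r,0]$, and adding the lower-tail bound yields the proposition.

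\emph{Main obstacle.} The delicate point will be the simultaneous calibration of the drift $\mu$: it has to be large enough relative to $\sqrt r$ that the Brownian-replacement event $\EE_s^\mu(a)$ fails only with probability beating $\exp(-\tfrac23 r^{3/2})$, and at the same time small enough that the residual drift $\Delta^{1/3}\mu b\sqrt r$ is a vanishing fraction of $r$. Reconciling these two constraints is exactly what forces the short window $t-s\le\rho-1$ with $\rho$ close to $1$, and all constants need to be tracked carefully through the diffusive rescaling.
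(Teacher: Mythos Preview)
Your proposal is correct and follows essentially the same route as the paper: same lower-tail bound via $\h_t(0)-\h_s(0)\ge\cl(0,s;0,t)$, same localization via Lemma~\ref{lem:maxbound} with cutoff of order $\sqrt r$, same Brownian-replacement step with the identical choices $\mu=\Delta^{-1/4}\sqrt r$ and $a=\Delta^{2/3}b\sqrt r$, and the same control of $\EE_s^{\mu}(a)^c$ through Lemma~\ref{lem:argmaxprop}\ref{3.1a} and Corollary~\ref{lem:exitbound}. The one place you differ is that you peel off the Brownian initial data $\h_0=\B$ at the outset and dispatch it directly via stationarity ($\nabla\stackrel{d}{=}\BR$), whereas the paper runs the full argument uniformly over all of the \st; your treatment is in fact slightly cleaner here, since Lemma~\ref{lem:maxbound} as stated covers only narrow wedge and functional data.
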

	\begin{proof} By the variational formula for the KPZ fixed point we know $$\frac{\h_t(0) - \h_{s}(0)}{(t-s)^{\frac{1}{3}}} \ge (t-s)^{-1/3}\L(0,0;0,t-s) \stackrel{d}{=} \tw.$$ By the lower tail estimates for  $\tw$ from \cite{ramirez2011beta} it follows that
		\begin{equation*}
			\P\Big(\frac{\h_t(0) - \h_{s}(0)}{(t-s)^{\frac{1}{3}}} \leq -r\Big) \leq \Con\exp\left(-\tfrac{1}{24}r^3\right). 
		\end{equation*}
		Hence it suffices to show that
		for $1 \leq s < t \leq \rho$ and $\rho-1$ small enough,
		\begin{equation*}
			\P\Big(\frac{\h_t(0) - \h_{s}(0)}{(t-s)^{\frac{1}{3}}} \geq r\Big) \leq \Con \exp(-(\tfrac{2}{3} - \e) r^{\frac{3}{2}}\Big).
		\end{equation*}

		Without loss of generality we may assume that $r > 1$ throughout the proof. Fix any $\e>0$ and $\stp\in \R\times \R_{>0}^2$. Consider any $\h_0$ in \st.  Let us pick $\rho(\stp)>1$, $\Con(\stp)>0$ from Lemma \ref{lem:maxbound}. Set $\eta=\eta(\stp)>0$ such that $\frac1\Con \eta^3=\frac23$. By Lemma \ref{lem:maxbound} we have
		\begin{align}
			\notag
			&\P\bigg(\sup_{x\in \R}\Big(\frac{\h_{s}((t-s)^{\frac{2}{3}}x) - \h_{s}(0)}{(t-s)^{\frac{1}{3}}} + \calA(x)-x^2 \Big) \geq r\bigg)\\
			&\label{e.p.sttail1}
			\leq \P\bigg(\sup_{|x|\le \eta \sqrt{r}}\Big(\frac{\h_{s}((t-s)^{\frac{2}{3}}x) - \h_{s}(0)}{(t-s)^{\frac{1}{3}}} + \calA(x)-x^2\Big) \geq  r\bigg) + \Con\exp\left(-\tfrac23 r^\frac{3}{2}\right).
		\end{align}
		Define the event in the first term of \eqref{e.p.sttail1} to be 
		\begin{equation*}
			\AA := \bigg\{ \sup_{|x|\le \eta \sqrt{r}}\Big(\frac{\h_{s}((t-s)^{\frac{2}{3}}x) - \h_{s}(0)}{(t-s)^{\frac{1}{3}}} + \calA(x)-x^2\Big) \geq  r\bigg\}.
		\end{equation*}
		We set $\mu=\sqrt{r}(t-s)^{-\frac14}$. Recall the event $\EE_t^{\mu}(a)$ from \eqref{eq:etma}.  By Lemma \ref{lem:argmaxprop} \ref{lem:compare}, on $\EE_s^{\mu}
		((t-s)^{\frac{2}{3}}\eta\sqrt{r})$, we have 
		\begin{align*}
			\h_{s}(x) - \h_{s} (0) &\leq \h^{\mu}_{s} (x) - \h^{\mu}_{s} (0), \qquad x \in [0, (t-s)^{\frac{2}{3}}\eta\sqrt{r}],\\
			\h_{s} (x) - \h_{s} (0) &\leq \h_{s}^{-\mu} (x) - \h_{s}^{-\mu} (0),\quad\, x \in [-(t-s)^{\frac{2}{3}}\eta \sqrt{r}, 0].
		\end{align*}
		Consequently, 
		\begin{align}\notag
			\P\big(\AA\big) &\leq \P\big(\AA\, \cap\, \EE_s^{\mu}
			((t-s)^{\frac{2}{3}}\eta\sqrt{r})\big) + \P\big(\EE_s^{\mu}
			((t-s)^{\frac{2}{3}}\eta\sqrt{r})^c\big)\\ 
			\label{e.p.sttail2}
			&\leq \P\big(\DD_{\mu} \big) + \P\big(\EE_s^{\mu}
			((t-s)^{\frac{2}{3}}\eta\sqrt{r})^c\big).
		\end{align}
		where 
		\begin{align*}
			\DD_{\mu} & := \Bigg\{\sup_{x\in [0, \eta\sqrt{r}]} \Big(\frac{\h^\mu_{s}((t-s)^{\frac{2}{3}}x) - \h^\mu_{s}(0)}{(t-s)^{\frac{1}{3}}} + \calA(x)-x^2\Big) \geq r, \text{ and } 
			\\ 
			& \hspace{2cm}  \sup_{x\in [-\eta \sqrt{r}, 0]}\Big(\frac{\h^{-\mu}_{s}((t-s)^{\frac{2}{3}}x) - \h^{-\mu}_{s}(0)}{(t-s)^{\frac{1}{3}}} + \calA(x)-x^2 \Big) \geq r\Bigg\}.
		\end{align*}
		We now claim that one can choose $\til{\rho}$ sufficiently close to $1$ such that for all $1\le s<t \le \til\rho$
		\begin{equation}
			\label{eq:claim3}
			\Pr(\DD_\mu) \le \Con\exp\left(-(\tfrac23-\e)r^{\frac32}\right),  \quad \P\big(\EE_s^{\mu}
			((t-s)^{\frac{2}{3}}\eta\sqrt{r})^c\big)\le \Con\exp\left(-\tfrac23r^{\frac32}\right).
		\end{equation}
		Plugging this bound to \eqref{e.p.sttail2}, in view of \eqref{e.p.sttail1} we get the desired bound. We now proceed to show \eqref{eq:claim3} in the following two steps.
		
		\medskip
		
		\noindent\textbf{Step 1: Upper bound of $\P\big(\DD_{\mu}\big)$.}
		We use $\BR_i$ to represent a random variable with Baik Rains distribution.  By Lemma \ref{lem:2.3} \ref{2.3c} We have
		\begin{align*}
			\sup_{x \in [0, \eta\sqrt{r}]}\Big(\frac{\h^\mu_{s}((t-s)^{\frac{2}{3}}x) - \h^\mu_{s}(0)}{(t-s)^{\frac{1}{3}}} + \calA(x)-x^2\Big)
			&\overset{d}{=} \sup_{x \in [0, \eta\sqrt{r}]}\Big(\B(x)+\mu(t-s)^{\frac13} x + \calA(x)-x^2\Big) \\ & \leq \BR_1 + (t-s)^{\frac{1}{3}} \mu \eta \sqrt{r}. 
		\end{align*}
		where $\BR_1$ denotes a random variable with Baik-Rains distribution \cite{br}. Similarly,
		\begin{align*}
			\sup_{x \in [-\eta\sqrt{r},0]}\Big(\frac{\h^{-\mu}_{s}((t-s)^{\frac{2}{3}}x) - \h^{-\mu}_{s}(0)}{(t-s)^{\frac{1}{3}}} + \calA(x)-x^2\Big)
			&\leq \BR_2 + (t-s)^{\frac{1}{3}} \mu \eta \sqrt{r}.
		\end{align*}
		for another random variable $\BR_2$ following Baik-Rains distribution. The correlation between $\BR_1,\BR_2$ is not important since we will only use the one point tail bound.
		Since $\mu = r^{\frac{1}{2}} (t-s)^{-\frac{1}{4}}$, applying an union bound along with Lemma \ref{lem:onepttail} \ref{2.6c} we have
		\begin{align}
			\notag
			\P\big(\DD_{\mu}\big) &\leq \P\Big(\max\big(\BR_1, \BR_2\big) + (t-s)^{\frac{1}{3}} \mu \eta \sqrt{r} \geq r\Big) 
			\\
			&\leq 
			\sum_{i=1}^2\P\Big(\BR_i\geq r[1 - \eta (t-s)^{\frac{1}{12}}]\Big) \leq \Con \exp\big(-(\tfrac{2}{3} - \e) r^{\frac{3}{2}}\big), \notag
		\end{align}
		for $t - s$ small enough. This proves the first inequality in \eqref{eq:claim3}.
		
		\medskip
		
		\noindent\textbf{Step 2: Upper bound of $\P\big(\EE_s^{\mu}
			((t-s)^{\frac{2}{3}}\eta\sqrt{r})^c\big)$.} 
		Set $a=(t-s)^{\frac{2}{3}}\eta\sqrt{r}$. By same argument as in \eqref{e.l.maxbound4} we have 
		\begin{equation}
			\label{45}
			\begin{aligned}
				\P\big(\EE_s^{\mu}
				(a)^c\big) & \leq \mathbb{P}\Big(Z^{0}_s(0) -a+\tfrac12\mu s \leq \tfrac14\mu\Big) +  \mathbb{P}\Big(Z_s (a; \h_0)\geq \tfrac{1}{4}\mu\Big) \\
				& \hspace{2cm}+\mathbb{P}\Big(Z^{0}_s(0) +a-\tfrac12\mu s \geq -\tfrac14\mu\Big) +  \mathbb{P}\Big(Z_s (-a; \h_0)\leq -\tfrac{1}{4}\mu\Big).
			\end{aligned}
		\end{equation}
		Recall that $\mu = r^{\frac{1}{2}} (t-s)^{-\frac{1}{4}}$. Setting $x=a/\mu$ we have $|x|\le (t-s)^{11/12}\eta$. One can take $t-s$ small enough so that Corollary \ref{lem:exitbound} becomes applicable with $r\mapsto \frac14\mu$. Applying Corollary \ref{lem:exitbound} then yields
		\begin{align*}
			\mbox{r.h.s.~of \eqref{45}} \le \Con \exp\left(-\tfrac1\Con \mu^3\right)=\Con \exp\left(-\tfrac1\Con (t-s)^{-3/4}r^{\frac32}\right).
		\end{align*}
		Choosing $t,s$ close enough so that $(t-s)^{-3/4} \ge \frac23$, we thus arrive at the second inequality in \eqref{eq:claim3}. This completes the proof. 
	\end{proof}
	Appealing to Lemma 3.3 in \cite{dv21} we have the following immediate corollary.
	\begin{corollary}\label{cor:pmod} Fix any $\stp\in \R\times \R_{>0}^2$ and
		consider the KPZ fixed point $\h$ started from an initial data $\h_0$ from the \st \ defined in Definition \ref{def:st}. There exist constants $\rho(\stp)>1$ and $\Con(\stp)>0$ such that
		\begin{equation*}
			\P\bigg(\sup_{t\neq s, t,s\in [1,\rho]}\frac{|\h_t(0) - \h_{s} (0)|}{{(t - s)}^{\frac13}\log^{2/3}\frac{2}{|t-s|}} \geq r\bigg) \leq \Con \exp\big(-\tfrac1\Con r^\frac{3}{2}\big).
		\end{equation*}
	\end{corollary}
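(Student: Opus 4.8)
The plan is to deduce the uniform temporal modulus of continuity directly from the pointwise increment estimate of Proposition~\ref{prop:stincrementtail} by invoking the general chaining criterion \cite[Lemma 3.3]{dv21}, exactly as is done for the directed landscape in the proof of Lemma~\ref{lem:scont} and for the spatial profile in Proposition~\ref{prop:mc}. First I would record that Proposition~\ref{prop:stincrementtail} says precisely that the one-parameter process $t\mapsto\h_t(0)$, restricted to the compact interval $[1,\rho]$, obeys a pointwise H\"older-type tail bound
\begin{equation*}
\P\big(|\h_t(0)-\h_s(0)|\ge r\,(t-s)^{\frac13}\big)\le\Con\exp\big(-(\tfrac23-\e)r^{\frac32}\big),\qquad 1\le s<t\le\rho,
\end{equation*}
with spacing exponent $\tfrac13$ and stretched-exponential tail of index $\tfrac32$. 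Since the constant in the exponent of the target estimate is allowed to be an arbitrary $\tfrac1\Con$, the precise prefactor $\tfrac23-\e$ plays no role and may be absorbed.

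The second step is to feed this bound into \cite[Lemma 3.3]{dv21}. That lemma upgrades a pointwise estimate of the above shape into a uniform one at the cost of a logarithmic factor whose exponent is the reciprocal of the tail index; here $1/(\tfrac32)=\tfrac23$, which is exactly the power of $\log\frac{2}{|t-s|}$ that appears in the statement, while the spacing exponent $\tfrac13$ is carried through unchanged. The conclusion is
\begin{equation*}
\P\bigg(\sup_{t\neq s,\ t,s\in[1,\rho]}\frac{|\h_t(0)-\h_s(0)|}{(t-s)^{\frac13}\log^{2/3}\frac{2}{|t-s|}}\ge r\bigg)\le\Con\exp\big(-\tfrac1\Con r^{\frac32}\big),
\end{equation*}
which is the asserted corollary. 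The only bookkeeping point is that $\rho\le 2$ (from Proposition~\ref{prop:stincrementtail}) forces $|t-s|\le 1$, so $\log\frac{2}{|t-s|}\ge\log 2>0$ and the denominator never degenerates.

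Should one prefer to avoid quoting \cite[Lemma 3.3]{dv21}, the same estimate follows from a L\'evy-type dyadic chaining argument copying the proof of Proposition~\ref{prop:mc}: partition $[1,\rho]$ into intervals of length $2^{-n}(\rho-1)$, let $X_n$ be the maximum of $|\h_t(0)-\h_s(0)|$ over consecutive level-$n$ grid points, telescope to $|\h_t(0)-\h_s(0)|\le\theta\,\|X\|\,(t-s)^{\frac13}\log^{2/3}\frac{2}{|t-s|}$ with $\|X\|:=\sup_{n\ge0}X_n\,2^{n/3}(n+2)^{-2/3}$, and bound $\P(\|X\|\ge r)$ by a union bound over the $O(2^n)$ increments at each level using Proposition~\ref{prop:stincrementtail}. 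The geometric series in $n$ converges precisely because the exponent $\tfrac32$ in the tail, combined with the choice of the weight $(n+2)^{2/3}$ (so that $((n+2)^{2/3})^{3/2}=n+2$ grows linearly in $n$), beats the $2^n$ counting factor once $r$ is large; for small $r$ the claimed bound is trivial after enlarging $\Con$.

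I do not expect a genuine obstacle: this is a routine modulus-of-continuity upgrade. The only care needed is to match the three coupled exponents---spacing $\tfrac13$, tail index $\tfrac32$, and hence log power $\tfrac23$---and to note that Proposition~\ref{prop:stincrementtail} indeed supplies the required pointwise estimate for every pair $s,t\in[1,\rho]$.
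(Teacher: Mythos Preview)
Your proposal is correct and matches the paper's approach exactly: the paper simply states that the corollary follows by appealing to \cite[Lemma 3.3]{dv21} applied to the pointwise increment bound of Proposition~\ref{prop:stincrementtail}. Your identification of the exponents (spacing $\tfrac13$, tail index $\tfrac32$, hence log power $\tfrac23$) and the alternative dyadic chaining argument are both on point, though the paper gives no further detail beyond the one-line citation.
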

	
	We also have the following weaker version of modulus of continuity as a consequence of Proposition \ref{prop:stincrementtail}, which will be useful in proving our main results.

	\begin{proposition}\label{pp:weakc} Fix any $\stp\in \R\times \R_{>0}^2$ and
		consider the KPZ fixed point $\h$ started from an initial data $\h_0$ from the \st \ defined in Definition \ref{def:st}. There exist constants $\rho(\stp)>1$ and $\Con(\stp)>0$ such that for all $1\le a\le b\le \rho$ we have
		\begin{equation*}
			\P\bigg(\sup_{t \in [a, b]}\frac{|\h_t (0) - \h_a(0)|}{(b-a)^{\frac{1}{3}}} \geq s\bigg) \leq \se{3/2}.
		\end{equation*}
	\end{proposition}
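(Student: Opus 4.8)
The plan is to run a L\'evy-type chaining argument over dyadic time scales inside $[a,b]$, with Proposition~\ref{prop:stincrementtail} as the single-increment input. The crucial feature of that proposition is that its tail bound carries \emph{no} logarithmic correction, which is exactly what makes the clean (log-free) estimate possible: one cannot simply quote Corollary~\ref{cor:pmod}, because for $t\in[a,b]$ and $s=a$ one has $\log^{2/3}\tfrac2{|t-a|}\ge\log^{2/3}\tfrac2{b-a}$, so that log factor blows up as $b-a\downarrow 0$ and destroys uniformity over $a,b$. Take $\rho=\rho(\stp)$ as in Proposition~\ref{prop:stincrementtail} and assume $a<b$ (the statement being vacuous otherwise); write $h:=b-a$. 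For $n\ge0$ and $0\le j\le 2^n$ put $t_{n,j}:=a+jh2^{-n}\in[a,b]\subseteq[1,\rho]$, and set
\[
X_n:=\max_{0\le j<2^n}\bigl|\h_{t_{n,j+1}}(0)-\h_{t_{n,j}}(0)\bigr|,\qquad
\norm{X}:=\sup_{n\ge1}\frac{X_n\,2^{n/3}}{h^{1/3}(n+2)^2}.
\]

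Next I would establish the deterministic inequality $\sup_{t\in[a,b]}|\h_t(0)-\h_a(0)|\le\theta\,\norm{X}\,h^{1/3}$ with $\theta:=\sum_{n\ge1}(n+2)^2 2^{-n/3}<\infty$. For fixed $t\in[a,b)$, let $u_n:=a+h2^{-n}\lfloor 2^n(t-a)/h\rfloor$ be the $n$-th dyadic approximation of $t$ from below; then $u_0=a$, $u_n\uparrow t$, each consecutive difference $u_n-u_{n-1}$ equals $0$ or $h2^{-n}$, and $u_{n-1},u_n$ are both of the form $t_{n,j}$. Using the a.s.\ continuity of $t\mapsto\h_t(0)$ (in fact it is H\"older-$1/3^{-}$ in $t$), telescoping yields $\h_t(0)-\h_a(0)=\sum_{n\ge1}(\h_{u_n}(0)-\h_{u_{n-1}}(0))$, with each nonzero summand bounded by $X_n$; summing and then letting $t\uparrow b$ gives the claimed bound. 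Consequently $\P(\sup_{t\in[a,b]}|\h_t(0)-\h_a(0)|\ge s h^{1/3})\le\P(\norm{X}\ge s/\theta)$.

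Finally I would bound $\P(\norm{X}\ge s)$ by a union bound over $n\ge1$ and the $2^n$ level-$n$ increments, each of temporal length $h2^{-n}$ and hence with scaling factor $(h2^{-n})^{1/3}=h^{1/3}2^{-n/3}$; Proposition~\ref{prop:stincrementtail} is applicable since $t_{n,j}<t_{n,j+1}$ both lie in $[1,\rho]$, and gives
\[
\P(\norm{X}\ge s)\le\sum_{n\ge1}2^n\,\Con\exp\!\bigl(-(\tfrac23-\e)\,s^{3/2}(n+2)^3\bigr).
\]
Since the cubic term in the exponent dominates the factor $2^n$, for $s$ above a threshold this sum is at most $\Con\exp(-\tfrac1\Con s^{3/2})$, while for $s$ below the threshold the asserted bound $\se{3/2}$ exceeds $1$ after enlarging $\Con$; hence it holds for all $s>0$. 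Replacing $s$ by $s/\theta$ and readjusting $\Con(\stp)$ completes the proof. The argument is routine once Proposition~\ref{prop:stincrementtail} is in hand; the only points that need (standard) care are the a.s.\ temporal continuity legitimizing the telescoping and the choice of the polynomial weights $(n+2)^2$, which must beat $2^n$ in the union bound while keeping $\sum_{n\ge1}(n+2)^2 2^{-n/3}$ finite — precisely as in the proof of Proposition~\ref{prop:mc}.
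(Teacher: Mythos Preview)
Your argument is correct and follows essentially the same dyadic chaining strategy as the paper: telescope along dyadic approximations $t_{n,j}=a+(b-a)j2^{-n}$, control each level via $X_n$, and feed Proposition~\ref{prop:stincrementtail} into a union bound. The only difference is bookkeeping in how the levels are weighted: the paper splits $s$ geometrically as $s=\sum_{n\ge1}(1-\theta)\theta^n s$ with $\theta=2^{-1/4}$, so that the rescaled threshold at level $n$ is $(1-\theta)2^{n/3}\theta^n s=(1-\theta)2^{n/12}s$, whereas you use polynomial weights $(n+2)^2$ exactly as in the proof of Proposition~\ref{prop:mc}. Both choices make the sum over $n$ converge and yield the same $\se{3/2}$ tail; the geometric choice is slightly cleaner here because it avoids having to argue that a cubic in $n$ beats the factor $2^n$, but the difference is cosmetic.
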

	
	\begin{proof} Let us denote $\h_t:=\h_t(0)$. Consider $t_{n, i} = a + (b - a) i 2^{-n}$, $i =0, 1, \dots, 2^n$ and define
		\begin{equation*}
			X_n = \sup_{i = 1, \dots, 2^n} |\h_{t_{n, i}} - \h_{t_{n, i-1}}|.
		\end{equation*} Note that for each $t\in [a,b]$ we have
		$$|\h_t-\h_a|=\left|\sum_{n=1}^{\infty} \h_{v_n}-\h_{v_{n-1}}\right| \le \sum_{n=1}^{\infty}\left|\h_{v_n}-\h_{v_{n-1}}\right| \le \sum_{n=1}^{\infty} X_n$$ 
		where $v_n:=a+ (b-a)2^{-n}\lfloor (t-a)2^n/(b-a)\rfloor$. Fix any $\theta\in (0,1)$ By union bound,
		\begin{align*}
			\Pr\left(\sup_{t\in [a,b]} \frac{|\h_t-\h_a|}{(b-a)^{\frac13}} \ge s\right) & \le \sum_{n=1}^{\infty} \Pr\left(  \frac{X_n}{(b-a)^{\frac13}} \ge (1-\theta)\theta^n s\right) \\ & \le \sum_{n=1}^{\infty} \sum_{i=1}^{2^n} \Pr\left(  \frac{|\h_{t_{n, i}} - \h_{t_{n, i-1}}|}{2^{-\frac{n}3}(b-a)^{\frac13}} \ge (1-\theta)2^{\frac{n}3}\theta^n s\right) \\ & \le \Con \sum_{n=1}^{\infty} 2^n\exp\left(-\tfrac1\Con \left[(1-\theta)2^{n/3}\theta^n s\right]^{3/2} \right) \le \se{3/2}. 
		\end{align*}
		Set $\theta=2^{-1/4}$ so that $2^{n/3}\theta^n =2^{n/12}$. This forces the right side of the above equation to be at most $\se{3/2}$, concluding the proof.
	\end{proof}

	\section{Landscape Replacements}\label{sec5}
	
	In this section we describe how to extract independent structure within the KPZ fixed point. The key idea is the novel construction of proxies for the KPZ fixed point which we termed as `landscape replacement'. Loosely speaking we `replace' sections of the directed landscape in the variational problem of the KPZ fixed point to obtain proxies for the KPZ fixed point at certain time points. Our construction of proxies is different for long time and short time cases. In Section \ref{sec5.1} we describe our long time landscape replacement approach whereas Section \ref{sec5.2} discusses short time landscape replacement. The results within these sections, namely Theorem \ref{ind} and Theorem \ref{thm:lr} form the important components for the lower bound of long time and short time LIL respectively.

	\subsection{Long time Landscape Replacement} \label{sec5.1}
	
	In this section we prove our long time landscape replacement theorem. Before going into the details of the theorem  we first describe how any initial data in \lt \ can be identified with initial data from \st. This allows us to apply all the estimates from the previous sections that pertains to initial data within the \st.

	\medskip
	
	Fix any $\h_0$ from the \lt \ for the rest of this section and let $\h_0(0)=\bet$.  Observe that for any $v\ge 1$, $\til{\h}_0^{(v)}(x):=v^{-1}\h_0(v^2x)$ is still within the \lt \ with the same constant $\csq$, and if $|\h_0(0)|\le \bet$, we have $|\til{\h}_0^{(v)}(0)|\le \bet$. Furthermore for any functional data within the \lt, one can find $\af(\csq)$ such that $\h_0(x)\le \csq\sqrt{1+|x|}\le \af+\frac12x^2$. In this way, $\h_0$ and $\til\h^{(v)}_0$ (for each $v\ge 1$) can be viewed as data from \st \ with $\stp=(\af,\frac12,\bet)$ with $\af$ as identified as above. 
	
	\medskip
	
	Let us now turn toward our long time landscape replacement result. We define
	\begin{align}\label{long}
		H^{t_2\downarrow t_1}:=\sup_{z\in \R} \left\{\h_0(z)+\cl(z,t_1;0,t_2)\right\}.
	\end{align}
	Note that $\h_{at}(0)$ and $H^{(a+at)\downarrow a}$ are the same in distribution for each $a$ and $t$. We call $H^{(a+at)\downarrow a}$ to be the proxy for $\h_{a+at}(0)$ for the following result.
	\begin{theorem}[Long time landscape replacement]\label{ind} Fix any $\ltp\in \R_{>0}^2$ and
		consider the KPZ fixed point $\h$ started from an initial data $\h_0$ from the \lt \ defined in Definition \ref{def:lt}. There exists a constant $\Con=\Con(\ltp)>0$ such that for all $a,t\ge 1$ we have
		\begin{align*}
			\Pr\left(\tfrac1{a^{1/3}t^{1/3}}\left|\h_{a+at}(0)-H^{(a+at)\downarrow a}\right| \ge 1\right) \le \Con\exp({-\tfrac1\Con t^{1/3}}).
		\end{align*}
	\end{theorem}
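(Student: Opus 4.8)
The plan is to compare the two variational problems \emph{directly}. Both
\[
\h_{a+at}(0)=\sup_{z\in\R}\bigl\{\h_0(z)+\cl(z,0;0,a+at)\bigr\},\qquad
H^{(a+at)\downarrow a}=\sup_{z\in\R}\bigl\{\h_0(z)+\cl(z,a;0,a+at)\bigr\}
\]
are suprema over the \emph{same} variable $z$ with the \emph{same} term $\h_0(z)$; they differ only in the landscape factor $\cl(z,0;0,a+at)$ versus $\cl(z,a;0,a+at)$. Fix a large constant $\Con=\Con(\ltp)$ and set $R:=\Con\,t^{1/9}(a+at)^{2/3}$. First I would show that both maximisers lie in $[-R,R]$ off an event of probability $\le\Con\exp(-\tfrac1\Con t^{1/3})$. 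For $\h_{a+at}(0)$, rescale the problem by $a+at$: since $a+at\ge1$, the rescaled datum $(a+at)^{-1/3}\h_0\bigl((a+at)^{2/3}\cdot\bigr)$ again lies in \lt\ with the same constants, hence in \st\ with $\stp=(\af,\tfrac12,\bet)$ (with $\af=\af(\csq)$), while $(a+at)^{-1/3}\cl\bigl((a+at)^{2/3}x,0;0,a+at\bigr)\stackrel{d}{=}\cl(x,0;0,1)$ as a process in $x$; Corollary~\ref{lem:exitbound} then bounds the rescaled exit probability by $\Con e^{-r^{3}/\Con}$, and $r=\Con t^{1/9}$ gives the claimed bound. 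For $H^{(a+at)\downarrow a}$ one adds Lemma~\ref{lem:sym}(a): the slice $\cl(\cdot,a;0,a+at)$ equals $\cl(\cdot,0;0,at)$ in law as a process and is independent of $\h_0$, so the law of its maximiser coincides with that of the maximiser of $\h_{at}(0)$, to which the same rescaling-by-$at$ argument applies. On the intersection of the two confinement events,
\[
\bigl|\h_{a+at}(0)-H^{(a+at)\downarrow a}\bigr|\ \le\ \sup_{|z|\le R}\bigl|\cl(z,0;0,a+at)-\cl(z,a;0,a+at)\bigr|,
\]
an expression no longer involving $\h_0$ --- which is exactly what makes the general \lt\ case tractable, even when $\h_0$ dips arbitrarily low. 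The narrow-wedge case is the degenerate $R=0$; the Brownian case is identical after conditioning on $\B$, which is independent of every landscape slice in play and is itself covered by Corollary~\ref{lem:exitbound}.

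Next I would rewrite the landscape difference by the metric composition law \eqref{metlaw} at the intermediate time $a$: with $\ell(w):=\cl(w,a;0,a+at)$,
\[
\cl(z,0;0,a+at)-\cl(z,a;0,a+at)=\sup_{w\in\R}\bigl\{\cl(z,0;w,a)+\ell(w)-\ell(z)\bigr\}.
\]
Two features are crucial. First, $\cl(z,0;\cdot,a)$ and $\ell(\cdot)$ live on disjoint time intervals, hence are independent (this is also what makes the proxy useful downstream). Second, the large parabolic parts almost cancel: the deterministic parabola in $\cl(z,0;0,a+at)$ is $-z^{2}/(a+at)$ while that of $\ell(z)$ is $-z^{2}/(at)$, and their difference $z^{2}\bigl(\tfrac1{at}-\tfrac1{a+at}\bigr)$ is only $\lesssim a^{1/3}$ on $|z|\le R$ --- whereas bounding $\ell(z)$ in isolation would be hopeless, as $|\ell(z)|$ is of order $a^{1/3}t^{5/9}$ there. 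The key step is therefore to localise, uniformly in $|z|\le R$, the inner maximiser $w^{*}(z)$ of $w\mapsto\cl(z,0;w,a)+\ell(w)$. Its deterministic profile $-(z-w)^{2}/a-w^{2}/(at)$ is a concave parabola in $w$ whose vertex lies within $\Con a^{2/3}$ of $z$ (because $|z|/(t+1)\le R/(t+1)\lesssim a^{2/3}$) and whose curvature is of order $a^{-1}$, so on the shell $\{|w-w^{*}|\in[ka^{2/3},(k+1)a^{2/3}]\}$ it drops by at least $\asymp k^{2}a^{1/3}$; bounding the landscape fluctuations on each shell via \eqref{eq:ProbBd1}--\eqref{eq:ProbBd2} and Lemma~\ref{lem:bbv}, and taking a union bound over the $\asymp t^{7/9}$ windows of size $a^{2/3}$ covering $[-R,R]$, yields $|w^{*}(z)-z|\le L:=\Con\,t^{1/9}a^{2/3}$ for all $|z|\le R$, off an event of probability $\le\Con\exp(-\tfrac1\Con t^{1/3})$.

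On that good event one has, uniformly in $|z|\le R$,
\[
\bigl|\cl(z,0;0,a+at)-\cl(z,a;0,a+at)\bigr|\ \le\ \sup_{|w-z|\le L}\bigl|\cl(z,0;w,a)\bigr|+\sup_{|w-z|\le L}\bigl|\ell(w)-\ell(z)\bigr|.
\]
The first term is at most $\sup_{|w-z|\le L}|\ck(z,0;w,a)|$, controlled after rescaling to unit time by \eqref{eq:ProbBd2} and a union bound over polynomially-in-$t$ many unit boxes. For the second term I would split $\ell(w)-\ell(z)=\bigl[\ck(w,a;0,a+at)-\ck(z,a;0,a+at)\bigr]-(w^{2}-z^{2})/(at)$, bounding the bracket by the H\"older-$\tfrac12$ modulus \eqref{eq:ProbBd3} rescaled to time $at$ and the parabola remainder deterministically (it is $\lesssim a^{1/3}$ on $|w-z|\le L$, $|z|\le R$). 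Choosing every tail parameter of order $t^{2/9}$ makes each of these contributions $\ll a^{1/3}t^{1/3}$ while keeping each exceedance probability $\le\Con\exp(-\tfrac1\Con t^{1/3})$ and every union-bound multiplicity polynomial in $t$, hence absorbed. Combining the good events yields $|\h_{a+at}(0)-H^{(a+at)\downarrow a}|<a^{1/3}t^{1/3}$ outside an event of probability $\le\Con\exp(-\tfrac1\Con t^{1/3})$; for $t$ in a bounded range the statement is trivial after enlarging $\Con$.

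The step I expect to be the genuine obstacle is the uniform-in-$z$ localisation of $w^{*}(z)$ to the window $L\asymp t^{1/9}a^{2/3}$: once that is in hand, everything downstream is a routine application of the spatial tail estimates of Section~\ref{sec:dl-decay}, but establishing it requires simultaneously tracking the strongly-confining parabola $-(z-w)^{2}/a$, the weakly-confining parabola in $\ell$, and the landscape fluctuations on every shell, while keeping the combinatorial multiplicities polynomial in $t$ and the exponents of order $t^{1/3}$. A secondary, purely bookkeeping nuisance is pushing the unit-time estimates of Lemma~\ref{lem:scont} and Lemma~\ref{lem:bbv} through the three rescalings (to times $a$, $at$ and $a+at$) and checking that the cancellation of the parabolic parts survives them.
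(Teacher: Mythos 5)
Your plan is a genuinely different route from the paper's, and it is considerably more labor‑intensive. The paper rescales the entire problem by $at$ at the outset, setting $\mathfrak{g}_0(x)=(at)^{-1/3}\h_0((at)^{2/3}x)$ (still in the ST-class), so that $a^{-1/3}t^{-1/3}|\h_{a+at}(0)-H^{(a+at)\downarrow a}|$ becomes a comparison of two variational problems over the \emph{unit} time interval whose landscapes differ by a time shift of only $t^{-1}$. After localizing the maximizer to $|z|\le t^{1/6}$ via Corollary~\ref{lem:exitbound}, the parabolic mismatch $\frac{z^2}{1+t}\le t^{-2/3}$ is automatically tiny, and the remaining landscape difference $\sup_{|z|\le t^{1/6}}|\ck(z,0;0,1+t^{-1})-\ck(z,t^{-1};0,1+t^{-1})|$ is handled in one stroke by the \emph{temporal} modulus of continuity of $\ck$ (Proposition~10.5 of \cite{DOV18}), since the time gap $t^{-1}$ is small. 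No metric composition, no inner maximizer, no multi-scale shell argument. Your approach, by contrast, works at the original scale where the time shift is $a$ (potentially enormous), so you are forced to expose the cancellation by hand through the metric composition law and then chase the inner maximizer $w^{*}(z)$ uniformly over $|z|\le R$. Your arithmetic — the vertex at $w_0=zt/(t+1)$ with curvature $\asymp a^{-1}$, the residual parabola $z^2/(at(t+1))\lesssim t^{-4/9}a^{1/3}$ on $|z|\le R$, the $\ell$-increment fluctuation scale $\asymp|w-z|^{1/2}$ after rescaling, and the $t^{7/9}$ covering multiplicity — all checks out, so the strategy is viable; but it is precisely the step you yourself flag as ``the genuine obstacle'' (uniform-in-$z$ confinement of $w^{*}(z)$ to a window $L\asymp t^{1/9}a^{2/3}$) that carries the entire weight, and you leave it unestablished. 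In short: same exit-bound input, but where the paper converts a huge time shift into a tiny one by a clean global rescaling and then invokes an off-the-shelf temporal H\"older estimate for $\ck$, you instead re-derive a version of that temporal continuity from scratch via a two-stage localization. The rescaling by $at$ — not by $a+at$, which you use only transiently for the exit bound — is the insight that collapses the argument to a few lines.
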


	\begin{proof} It suffices to prove the result for large enough $t$. Set $\mathfrak{g}_0(x)=(at)^{-1/3}\til{\h}_0((at)^{2/3}x).$ We may write
		$$\tfrac1{a^{1/3}t^{1/3}}\left|\h_{a+at}(0)-H^{(a+at)\downarrow a}\right|=\left|\mathfrak{g}_{1+t^{-1}}-G^{(1+t^{-1})\downarrow t^{-1}}\right|,$$
		where 
		\begin{align*}
			\mathfrak{g}_{1+t^{-1}}: =\sup_{z\in \R} [\g_0(z)+\cl(z,0;0,1+t^{-1})], \quad G^{(1+t^{-1})\downarrow t^{-1}}:=\sup_{z\in \R} [\g_0(z)+\cl(z,t^{-1};0,1+t^{-1})].
		\end{align*}
		Let us define
		\begin{align*}
			\til{\mathfrak{g}}_{1+t^{-1}}: =\sup_{|z|\le t^{1/6}} [\g_0(z)+\cl(z,0;0,1+t^{-1})], \quad \til{G}^{(1+t^{-1})\downarrow t^{-1}}:=\sup_{|z|\le t^{1/6}} [\g_0(z)+\cl(z,t^{-1};0,1+t^{-1})].
		\end{align*}
		Although the initial data $\g_0$ depends on $a$ and $t$, as explained in the begining of this section one can find $\stp\in \mathbb{R}\times\R_{>0}^2$ depending on $\ltp$ such that $\g_0$ is always within \st, for all $a,t\ge 1$. We may thus apply all our results from previous sections that are based on \st. We apply Corollary \ref{lem:exitbound} with $x\mapsto 0,r \mapsto t^{1/6},t\mapsto 1$ and $x\mapsto 0,r \mapsto t^{1/6},t\mapsto 1+t^{-1}$, we see that with probability $1-\exp\left(-\tfrac1\Con t^{1/2}\right)$, $\mathfrak{g}_{1+t^{-1}}=\til{\mathfrak{g}}_{1+t^{-1}}$ and ${G}^{(1+t^{-1})\downarrow t^{-1}}=\til{G}^{(1+t^{-1})\downarrow t^{-1}}$ respectively (see also Remark \ref{rm:anymax}). Thus,
		\begin{align*}
			& \Pr\left(\left|\mathfrak{g}_{1+t^{-1}}-G^{(1+t^{-1})\downarrow t^{-1}}\right| \ge 1\right)\le \Pr\left(\left|\til{\mathfrak{g}}_{1+t^{-1}}-\til{G}^{(1+t^{-1})\downarrow t^{-1}}\right| \ge 1\right)+\exp\left(-\tfrac1\Con t^{1/2}\right).
		\end{align*}
		However,
		\begin{align*}
			\left|\til{\mathfrak{g}}_{1+t^{-1}}-\til{G}^{(1+t^{-1})\downarrow t^{-1}}\right|\le \sup_{|z|\le t^{1/6}}\left| \ck(z,0;0,1+t^{-1})-\ck(z,t^{-1};0,1+t^{-1})\right|+\tfrac{t^{1/3}}{1+t}.
		\end{align*}
		Taking $t$ large enough we see that
		\begin{align*}
			& \Pr\left(\left|\til{\mathfrak{g}}_{1+t^{-1}}-\til{G}^{(1+t^{-1})\downarrow t^{-1}}\right| \ge 1\right)\\ & \le \Pr\left(\sup_{|z|\le t^{1/6}}t^{2/9}\left| \ck(z,0;0,1+t^{-1})-\ck(z,t^{-1};0,1+t^{-1})\right|\ge \tfrac12t^{2/9}\right) \\ & \le \Con t^{1/6}\exp\left(-\tfrac1\Con t^{1/3}\right) \le \til{\Con}\exp\left(-\tfrac1{\til{\Con}} t^{1/3}\right),
		\end{align*}
		where the penultimate inequality follows from modulus of continuity of $\ck$ from Proposition 10.5 in \cite{DOV18}. This concludes the proof.
	\end{proof}
	\subsection{Short time Landscape Replacement}\label{sec5.2} The main goal of this section is to prove short time landscape replacement, namely Theorem \ref{thm:lr}. Towards this end, for $t_2>t_1$ we define
	\begin{align}\label{def:dwnar}
		H_{1}^{t_2\downarrow t_1}:=\sup_{z\in \R} \left\{\h_1(z)+\cl(z,1+t_1;0,1+t_2)\right\}.
	\end{align}
	Here $H_1^{t_2\downarrow t_1}$ will be the proxy for $\h_{1+t_2}(0)$ where $t_1=(at)^{-1}$ and $t_2=a^{-1}+(at)^{-1}$ with $a,t$ large enough. Due to technical reasons we impose further conditions on the pair $(a,t)$. 
	\begin{definition}[Permissible pairs] \label{pair} We call an ordered pair of reals $(a,t)$ to be permissible if
		\begin{align*}
			t\ge 1, \quad a \ge t^{1/8}, \quad 
			\log^{8/3}(at^{7/8}) \le \frac{t^{1/6}}{2\sqrt{2}\Delta_2},
		\end{align*}
		where $\Delta_2>0$ is an absolute constant that comes from Lemma \ref{l:rai}.
	\end{definition}
	We have the following theorem about replacement of appropriate sections of Landscapes in case of short time. 
	\begin{theorem}[Short time landscape replacement] \label{thm:lr}  Fix any $\stp\in \R\times \R_{>0}^2$ and
		consider the KPZ fixed point $\h$ started from an initial data $\h_0$ from the \st \ defined in Definition \ref{def:st}. There exist a constant $\Con=\Con(\stp)>0$ such that for all permissible pairs $(a,t)$ we have
		\begin{align}\label{eq:lr}
			\Pr\left(a^{\frac13}{\left|\h_{1+a^{-1}+(at)^{-1}}(0)-H_1^{a^{-1}+(at)^{-1}\downarrow (at)^{-1}}\right|}\ge 1\right) \le \Con(at)^{\frac23}\exp\left(-\tfrac1{\Con}t^{\frac3{16}}\right).
		\end{align}	
	\end{theorem}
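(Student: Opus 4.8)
The plan is to follow the same template as the long time landscape replacement (Theorem \ref{ind}): first restrict both the true height $\h_{1+t_2}(0)$ and the proxy $H_1^{t_2\downarrow t_1}$ to a compact spatial window using the argmax estimates of Section \ref{sec3}, then control the difference on that window by the modulus of continuity of the stationary directed landscape $\ck$ in its time variables. Write $t_1=(at)^{-1}$ and $t_2=a^{-1}+(at)^{-1}$, so that $t_2-t_1=a^{-1}$ and $t_2/t_1 = t+1$. Rescaling by $a^{1/3}$ and applying Lemma \ref{lem:sym} as in the proof of Theorem \ref{ind}, one reduces to comparing
\[
\mathfrak g_{1+a t_1\cdot a^{-1}}\quad\text{vs.}\quad G^{\,\cdot\,\downarrow\,\cdot}
\]
for a rescaled initial datum $\g_1(\cdot)$ built from $\h_1(a^{2/3}\cdot)$; the crucial point is that after rescaling the two landscape pieces $\cl(z,1;0,1+t_2)$ and $\cl(z,1+t_1;0,1+t_2)$ become $\cl$ evaluated over time spans $\approx 1$ and $\approx 1-1/t$, which differ only by a short time layer of width $\approx 1/t$.

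First I would carry out the truncation step. By Lemma \ref{lem:maxbound} (applied to the increment variational problem for $\h$ at time $1$, with time increments of order $t_2$ and $t_2-t_1$) together with Corollary \ref{lem:exitbound} and Remark \ref{rm:anymax}, the argmax in both $\h_{1+t_2}(0)$ and $H_1^{t_2\downarrow t_1}$ lies in a window of half-width $\approx (t_2)^{2/3}t^{1/6}$ (equivalently, after the diffusive rescaling by $a^{2/3}$, in $|x|\le t^{1/6}$) off an event of probability $\le \Con\exp(-\tfrac1\Con t^{1/2})$. This is exactly parallel to the use of Corollary \ref{lem:exitbound} in Theorem \ref{ind}. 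The permissibility condition $a\ge t^{1/8}$ is what guarantees that the relevant scaled spatial parameter stays inside the $\Con^{-1}$ ball required by Corollary \ref{lem:exitbound} and Lemma \ref{lem:maxbound}, so one should track that bookkeeping carefully.

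Next, on the truncated event, bound the difference of the restricted suprema by
\[
\sup_{|z|\le t^{1/6}}\bigl|\ck(z,1;0,1+t_2)-\ck(z,1+t_1;0,1+t_2)\bigr| \;+\; \text{(a parabolic-shift error of order }t_1^{1/3}\text{)},
\]
using $|\sup f-\sup g|\le \sup|f-g|$ and peeling off the deterministic parabola terms exactly as in the last display of the proof of Theorem \ref{ind}. The parabolic error is $\lesssim t^{1/3}\cdot t_1 = t^{1/3}(at)^{-1}\to 0$ after rescaling, hence negligible. For the main stochastic term, rescale $\ck$ by Lemma \ref{lem:sym}\ref{2.1d} so that the two time-arguments sit at distance $\approx t_1/t_2 \approx 1/t$ apart within a span of order $1$, and apply the quantitative modulus of continuity of $\ck$ in time (Proposition 10.5 in \cite{DOV18}, as used at the end of the proof of Theorem \ref{ind}): over a spatial box of size $t^{1/6}$ and a time displacement of size $\approx 1/t$, the oscillation of $\ck$ is $\lesssim (1/t)^{1/3}\log^{2/3}(\cdots)$ with Gaussian-type tails, and the permissibility condition $\log^{8/3}(at^{7/8})\le t^{1/6}/(2\sqrt2\Delta_2)$ is precisely the inequality needed to absorb the logarithmic correction and conclude that the oscillation exceeds $1$ (after multiplying by the appropriate power of $t$) with probability at most $\Con t^{1/6}\exp(-\tfrac1\Con t^{3/16})$. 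A union bound over the two landscape pieces and the discretization in the modulus-of-continuity estimate produces the prefactor $\Con(at)^{2/3}$ in \eqref{eq:lr}.

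The main obstacle I anticipate is not any single estimate but the simultaneous bookkeeping of the several competing scales: the spatial window $t^{1/6}$, the time gap $1/t$, the rescaling exponents coming from Lemma \ref{lem:sym}\ref{2.1d}, and the logarithmic factor in the $\ck$ time-modulus — one must verify that permissibility makes all the error terms subexponentially small in $t^{3/16}$ and, simultaneously, that the scaled spatial arguments remain within the validity ranges ($|x|\le\Con^{-1}$) of Corollary \ref{lem:exitbound} and Lemma \ref{lem:maxbound}. The exponent $3/16$ in the final bound should emerge as (modulus-of-continuity Hölder exponent $1/3$) $\times$ (effective scale) once the $t^{1/6}$ window and the $1/t$ gap are combined, so getting that exponent to come out exactly requires care in choosing how aggressively to truncate. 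For the Brownian initial datum there is the additional subtlety (already flagged in the introduction and handled for the long time case) that $\h_1$ and the proxy landscape are coupled through $\h_0=\B$; but here the proxy only modifies the landscape \emph{after} time $1$, so $\h_1$ and $\cl(\cdot,1+t_1;0,1+t_2)$ are genuinely independent by the disjoint-interval independence in Definition \ref{def:dl}, and no further argument is needed on that point.
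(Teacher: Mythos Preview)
Your plan takes a genuinely different route from the paper, and in doing so you have misidentified where the exponent $3/16$ and the specific permissibility inequality come from.

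The paper does \emph{not} control the difference via the temporal modulus of continuity of $\ck$ as in Theorem \ref{ind}. After truncating both variational problems to $|y|,|z|\le 1$ via Lemma \ref{lem:maxbound} (Steps 1--2), the key Step 3 uses the metric composition law \eqref{metlaw} to write $\til\h_{1+t_2}(0)$ as a double supremum over $y$ (at time $1$) and $z$ (at time $1+t_1$), and then bounds
\[
\til\h_{1+t_2}(0) - \til H_1^{t_2\downarrow t_1} \;\le\; \sup_{|y|,|z|\le 1}\Bigl[\h_1(y)-\h_1(z)-\tfrac{at}{4}(y-z)^2\Bigr] \;+\; \sup_{y\in\R,\,|z|\le 1}\Bigl[\cl(y,1;z,1+t_1)+\tfrac{at}{4}(y-z)^2\Bigr].
\]
The first supremum is handled by the \emph{spatial} modulus of continuity of $\h_1$ (Proposition \ref{prop:mc}) at level $s=t^{1/8}$ combined with the real analysis inequality Lemma \ref{l:rai}; this is exactly where $\Delta_2$ and the condition $\log^{8/3}(at^{7/8})\le t^{1/6}/(2\sqrt2\Delta_2)$ enter, and the tail $\exp(-\tfrac1\Con s^{3/2})=\exp(-\tfrac1\Con t^{3/16})$ is the source of the exponent $3/16$. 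The second supremum is controlled by the parabolic decay Lemma \ref{lem:bbv}, which is where the prefactor $(at)^{2/3}$ comes from. The reverse inequality (Step 4) uses $\cl(z,1;z,1+t_1)$ on the diagonal.

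Your plan --- bounding $\sup_{|z|\le R}\bigl|\ck(z,1;0,1+t_2)-\ck(z,1+t_1;0,1+t_2)\bigr|$ directly via Proposition 10.5 of \cite{DOV18} --- is a plausible alternative and, carried through, would parallel Theorem \ref{ind} and give an exponent closer to $t^{1/3}$, with no role for Lemma \ref{l:rai} or the $\Delta_2$ condition. So your assertion that the permissibility inequality is ``precisely the inequality needed to absorb the logarithmic correction'' in the $\ck$ time-modulus is incorrect: that inequality is tailored to the paper's spatial-modulus route, not yours. There is also a genuine wrinkle in your rescaling step: you cannot treat $\g_1(\cdot)=a^{-1/3}\h_1(a^{2/3}\cdot)$ as a datum in \st\ with uniform parameters (it is random and has no deterministic sub-parabolic envelope), so Corollary \ref{lem:exitbound} does not apply to it; the truncation must go through Lemma \ref{lem:maxbound} for the original $\h_0$, as the paper does.
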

	The above theorem is sufficient for our purpose as in our applications of the above theorem in next section $(a,t)$ will be a permissible pair.
	\begin{proof} As usual we may assume $a,t$ are large enough. As otherwise we may choose $\Con$ appropriately large. Define
		\begin{align*}
			& \til\h_{1+a^{-1}+(at)^{-1}}(0)  :=\sup_{|y|,|z|\le 1} \left[\h_1(y)+\cl\left(y,1;z,1+\tfrac1{at}\right)+\cl\left(z,1+\tfrac1{at};0,1+\tfrac1a+\tfrac1{at}\right)\right], \\
			& \til{H}_1^{a^{-1}+(at)^{-1}\downarrow (at)^{-1}} := \sup_{|z|\le 1} \left[\h_1(z)+\cl\left(z,1+\tfrac1{at};0,1+\tfrac1a+\tfrac1{at}\right)\right].
		\end{align*}
		
		For clarity we divide the rest of the proof into four steps. 
		
		\medskip
		
		\noindent\textbf{Step 1.} In this step we show
		\begin{align}\label{s1}
			\Pr\left({H}_1^{a^{-1}+(at)^{-1}\downarrow (at)^{-1}}\neq \til{H}_1^{a^{-1}+(at)^{-1}\downarrow (at)^{-1}}\right) \le \Con \exp(-\tfrac1\Con (at)^{2}) \le  \Con \exp(-\tfrac1\Con t^{9/4}).
		\end{align}
		Recall that
		\begin{align*}
			{H}_1^{a^{-1}+(at)^{-1}\downarrow (at)^{-1}}=\sup_{z\in \R}\left[\h_1(z)+\cl\left(z,1+\tfrac1{at};0,1+\tfrac1a+\tfrac1{at}\right)\right].
		\end{align*}
		We write $\cl\left(z,1+\tfrac1{at};0,1+\tfrac1a+\tfrac1{at}\right)= (at)^{-1/3}\calA((at)^{2/3}z)-atz^2,$ where $\calA$ is a stationary $\operatorname{Airy}_2$ process independent of $\h_1$. Observe that 
		\begin{equation} \label{eq:trick}
			\begin{aligned}
				& \Pr\left({H}_1^{a^{-1}+(at)^{-1}\downarrow (at)^{-1}}\neq \til{H}_1^{a^{-1}+(at)^{-1}\downarrow (at)^{-1}}\right) \\ & \le \Pr\left(\sup_{|z|\ge 1} \left[\h_1(z)-\h_1(0)+\frac{\calA\left((at)^{\frac23}z\right)-(at)^{\frac43}z^2}{(at)^{\frac13}}\right] \ge (at)^{-\frac13}\calA(0)\right) \\ & \le \Pr\left(\sup_{|x|\ge (at)^{\frac23}} \left[\frac{\h_1\left((at)^{-\frac23}x\right)-\h_1(0)}{(at)^{-\frac13}}+\calA\left(x\right)-x^2\right] \ge \calA(0)\right) \le \Con\exp\left(-\tfrac1\Con (at)^{2}\right)
			\end{aligned}
		\end{equation}
		where the last inequality follows by applying Lemma \ref{lem:maxbound} assuming $at$ is large enough. This proves the first inequality in \eqref{s1}. The second one follows as $a\ge t^{1/8}$.

		\medskip
		
		\noindent\textbf{Step 2.} In this step we show
		\begin{align}\label{s2}
			\Pr\left(\h_{1+a^{-1}+(at)^{-1}}(0)\neq \til\h_{1+a^{-1}+(at)^{-1}}(0)\right) \le \Con \exp(-\tfrac1\Con a^{2}) \le \Con \exp(-\tfrac1\Con t^{1/4}).
		\end{align}
		By the metric composition law of directed landscape \eqref{metlaw} we may write
		\begin{align*}
			\h_{1+a^{-1}+(at)^{-1}}(0)  & := \sup_{z\in \R} \left[\h_{1+(at)^{-1}}(z)+\cl\left(z,1+\tfrac1{at};0,1+\tfrac1a+\tfrac1{at}\right)\right] \\ & = \sup_{y,z \in \R} \left[\h_1(y)+\cl\left(y,1;z,1+\tfrac1{at}\right)+\cl\left(z,1+\tfrac1{at};0,1+\tfrac1a+\tfrac1{at}\right)\right] \\ & = \sup_{y\in \R} \left[\h_1(y)+\cl\left(y,1;0,1+\tfrac1a+\tfrac1{at}\right)\right].
		\end{align*}
		Applying the same trick as in \eqref{eq:trick} we see that Proposition \ref{lem:maxbound} implies that with probability at least $1-\Con\exp\left(-\tfrac1\Con a^{2}\right)$, the above supremum is attained within $z\in \R, |y|\ge 1$. The same trick  in \eqref{eq:trick} also shows that due to Proposition \ref{lem:maxbound} with probability at least $1-\Con\exp\left(-\tfrac1\Con (at)^{2}\right)$, the above supremum is attained within $y\in \R, |z|\le 1$. Thus by union bound, we have the first inequality \eqref{s2}. The second one follows as $a\ge t^{1/8}$.
		
		\medskip
		
		\noindent\textbf{Step 3.} Due to \textbf{Step 1} and \textbf{Step 2}, it suffices to prove the theorem with $\h_{1+a^{-1}+(at)^{-1}}(0)$ and $H_1^{a^{-1}+(at)^{-1}\downarrow (at)^{-1}}$ replaced by $\til\h_{1+a^{-1}+(at)^{-1}}(0)$ and $\til{H}_1^{a^{-1}+(at)^{-1}\downarrow (at)^{-1}}$ respectively. Set $s=t^{1/8}$ and consider the events
		\begin{align*}
			\mathsf{B}_{a,t} & :=\left\{\sup_{|y|,|z|\le 1} \frac{|\h_1(y)-\h_1(z)|}{|y-z|^{\frac12}\log^2\frac{2}{|y-z|}} \ge s\right\}, \\
			\mathsf{C}_{a,t} & = \left\{\sup_{y\in\R,|z|\le 1} \left[\cl\left(y,1;z,1+\tfrac1{at}\right)+\tfrac{at(y-z)^2}{4}\right]\ge \tfrac{a^{-1/3}}{2}\right\}.
		\end{align*}
		On $\mathsf{B}_{a,t}^c \cap \mathsf{C}_{a,t}^c$ we have	
		\begin{align*}
			\til\h_{1+a^{-1}+(at)^{-1}}(0)  & = \sup_{|y|,|z|\le1} \left[\h_1(y)-\h_1(z)-\tfrac{at(y-z)^2}{4}+\cl\left(y,1;z,1+\tfrac1{at}\right)+\tfrac{at(y-z)^2}{4}\right. \\ & \hspace{3cm}\left.+\h_1(z)+\cl\left(z,1+\tfrac1{at};0,1+\tfrac1a+\tfrac1{at}\right)\right] \\ & \le \sup_{|y|,|z|\le 1} \left[\h_1(y)-\h_1(z)-\tfrac{at(y-z)^2}{4}\right]\\ & \hspace{2cm}+\sup_{y\in \R,|z|\le 1}\left[\cl\left(y,1;z,1+\tfrac1{at}\right)+\tfrac{at(y-z)^2}{4}\right] +\til{H}_1^{a^{-1}+(at)^{-1}\downarrow (at)^{-1}} \\ & \le \sup_{|y|,|z|\le 1} \left[s|y-z|^{\frac12}\log^2\tfrac{2}{|y-z|}-\tfrac{at(y-z)^2}{4}\right]+\tfrac12{a^{-\frac13}} +\til{H}_1^{a^{-1}+(at)^{-1}\downarrow (at)^{-1}}.
		\end{align*}  
		Note that as $s=t^{1/8}$, applying Lemma \ref{l:rai} with $x\mapsto \frac{|y-z|}{2}$, $\alpha\mapsto 2$, $\gamma \mapsto \frac{at}{s} \ge 2$ to get	
		\begin{align*}
			\sup_{|y|,|z|\le 1} \left[s|y-z|^{\frac12}\log^2\tfrac{2}{|y-z|}-\tfrac{at(y-z)^2}{4}\right] & = \sqrt{2}s\sup_{x\in[0,1]} \left[x^{\frac12}\log^2\tfrac{1}{x}-\tfrac{at}{s}x^2\right] \\ & \le \Delta_2 \sqrt{2}s(at/s)^{-\frac13}\log^{8/3}(at/s) \\ & = a^{-\frac13} \cdot \left[\Delta_2 \sqrt{2} t^{-\frac16}\log^{8/3}(at^{7/8})\right] \le \tfrac12a^{-\frac13},
		\end{align*}
		where the last inequality follows as $(a,t)$ is permissible. Thus, on $\mathsf{B}_{a,t}^c \cap \mathsf{C}_{a,t}^c$ we have
		\begin{align*}
			\til\h_{1+a^{-1}+(at)^{-1}}(0)  & \le {a^{-\frac13}} +\til{H}_1^{a^{-1}+(at)^{-1}\downarrow (at)^{-1}}.
		\end{align*}
		On the other hand, by Proposition \ref{prop:mc} we have $\Pr\left(\mathsf{B}_{a,t}\right) \le \se{3/2}=\te{3/16}$. Apply Lemma \ref{lem:bbv} with $\delta\mapsto 3, v\mapsto (at)^{-1}$, $s\mapsto \frac12a^{-\frac13}$, and $r\mapsto 1$, to get $\Pr\left(\mathsf{C}_{a,t}\right) \le \Con((at)^{\frac23}+1)\exp(-\frac1{\Con}t^{\frac12}).$ Hence,
		\begin{align}\label{s3}
			\Pr\left(\til\h_{1+a^{-1}+(at)^{-1}}(0) \ge a^{-\frac13}+\til{H}_1^{a^{-1}+(at)^{-1}\downarrow (at)^{-1}}\right) & \le \Pr\left(\mathsf{B}_{a,t}\right)+\Pr\left(\mathsf{C}_{a,t}\right) \\ & \le \Con (at)^{\frac23}\exp\left(-\tfrac1\Con t^{\frac3{16}}\right). \notag
		\end{align}

		\medskip
		
		\noindent\textbf{Step 4.} Consider the event
		\begin{align*}
			\mathsf{D}_{a,t} & :=\left\{\inf_{|z|\le 1} \cl(z,1;0,1+(at)^{-1}) \le -a^{-\frac13}\right\}.
		\end{align*}
		On $\mathsf{D}_{a,t}^c$ we have
		\begin{align*}
			\til\h_{1+a^{-1}+(at)^{-1}}(0)  & \ge \sup_{|z|\le 1} \left[\h_1(z)+\cl\left(z,1;z,1+\tfrac1{at}\right)+\cl\left(z,1+\tfrac1{at};0,1+\tfrac1a+\tfrac1{at}\right)\right] \\ & \ge \sup_{|z|\le 1} \left[\h_1(z)+\cl\left(z,1+\tfrac1{at};0,1+\tfrac1a+\tfrac1{at}\right)\right]+\inf_{|z|\le 1}\cl\left(z,1;z,1+\tfrac1{at}\right) \\ & \ge \til{H}_1^{a^{-1}+(at)^{-1}\downarrow (at)^{-1}}-{a^{-\frac13}}.
		\end{align*}
		On the other hand by union bound we have, note that by modulus of continuity of $\ck$ and tail bounds for $\ck$ we have
		\begin{align*}
			\Pr\left(\mathsf{D}_{a,t}\right) & \le \Pr\left(\sup_{|z|\le 1} |\cl(z,1;z,1+(at)^{-1})|\ge a^{-\frac13}\right) \\ & \le \Pr\left(\sup_{|z|\le (at)^{\frac23}} |\ck(z,1;z,1)|\ge t^{\frac13}\right) \\ & \le \sum_{k=-\lceil (at)^{\frac23} \rceil}^{\lceil (at)^{\frac23} \rceil} \Pr\left(\sup_{z\in [k,k+1]} |\ck(z,1;z,1)|\ge t^{\frac13}\right) \le  \Con(at)^{2/3}\exp\left(-\tfrac1\Con t^{1/2	}\right),
		\end{align*}
		where the last inequality follows \eqref{eq:ProbBd2}. Thus we have
		\begin{align}\label{s4}
			\Pr\left(\til\h_{1+a^{-1}+(at)^{-1}}(0) \ge a^{-\frac13}+\til{H}_1^{a^{-1}+(at)^{-1}\downarrow (at)^{-1}}\right) \le \Pr\left(\mathsf{D}_{a,t}\right)\le \Con(at)^{2/3}\exp\left(-\tfrac1\Con t^{1/2	}\right).
		\end{align}
		Combining \eqref{s3} and \eqref{s4}, thanks to \eqref{s1} and \eqref{s2}, we arrive at \eqref{eq:lr}.
	\end{proof}

	\section{Proof of main results}\label{sec6}
	
	In this section we prove our main results on long time and short time Law of Iterated logarithms. We prove long time and short time LIL in Section \ref{sec6.1} and Section \ref{sec6.2} respectively. 
	
	\subsection{Long Time LIL: Proof of Theorems \ref{thm:longLILa} and \ref{thm:longLILb}} \label{sec6.1}
	
	In this section we prove Theorems \ref{thm:longLILa} and \ref{thm:longLILb}. Fix $\ltp=(\csq,\bet)\in \R_{>0}^2$. Fix any $\h_0$ from the \lt. For any $v\ge 1$,  set $\til{\h}_0^{(v)}(x):=v^{-1}\h_0(v^2x)$. As described in the beginning of Section \ref{sec5.1},  $\h_0$ and $\til\h^{(v)}_0$ (for each $v\ge 1$) can be viewed as data from \st \ for some $\stp=(\af,\frac12,\bet) \in \R\times \R_{>0}^2$. For convenience set
	\begin{align}\label{def:w}
		w:=\begin{cases}
			\frac43 & \h_0 \mbox{ is non random} \\
			\frac23 & \h_0 \mbox{ is Brownian}
		\end{cases}.
	\end{align}
	To prove \eqref{eq:main3} and \eqref{eq:main4} it suffices to show
	\begin{align*}
		\limsup_{t\to \infty} \frac{\h_t(0)}{t^{1/3}(\log\log t)^{2/3}}\le w^{-2/3}, \qquad \limsup_{t\to \infty} \frac{\h_t(0)}{t^{1/3}(\log\log t)^{2/3}}\ge w^{-2/3}.
	\end{align*}
	The argument now follows by demonstrating the above two bounds, the upper bound and lower bound, separately. This is done in Section  \ref{sec:ublil} and Section \ref{sec:lblil} respectively.

	\subsubsection{Upper bound}  \label{sec:ublil}
	
	\medskip
	
	Fix any $\e>0$. Get $\rho(\stp)>1$ and $\Con(\stp)>0$ from Proposition \ref{pp:weakc}. Get $\til{\rho}(\e,\stp)>1$ so that $\frac1\Con(\til{\rho}-1)^{-\frac13} \e^{3/2}=\frac43$. Set $t_n=\til{\rho}^n$. Let $\gamma$ be such that
	\begin{align}\label{eq:gamma}
		(\tfrac1w+\gamma)(1-\e)(w-\e)>1.
	\end{align}
	
	We claim that
	\begin{align}\label{eq:bcsum1}
		\sum_{n=1}^{\infty}\Pr\left(\mathsf{A}_n^{\gamma}\right)<\infty, \ \ \ \mbox{ where }\mathsf{A}_n^{\gamma}:=\left\{\sup_{t\in [t_n,t_{n+1}]}\frac{\h_t(0)}{t^{1/3}(\log\log t)^{2/3}} \ge (\tfrac1w+\gamma)^{\frac23}\right\}.
	\end{align}
	Clearly one can take $\gamma$ arbitrarily close to $0$ by taking $\e$ arbitrarily close to $0$. Thus by Borel-Cantelli Lemma \eqref{eq:bcsum1} proves the upper bound. So, it suffices to show \eqref{eq:bcsum1}. Towards this end, for all large enough $n$ we define
	\begin{align}
		\label{def:rn}
		u_n:=(\tfrac1w+\gamma)^{2/3}(\log n+\log\log \til\rho)^{2/3}.
	\end{align}
	Observe that 
	\begin{align*}
		\sup_{t\in [t_n,t_{n+1}]}\frac{\h_t(0)}{t^{1/3}(\log\log t)^{2/3}(\tfrac1w+\gamma)^{\frac23}}  \le u_n^{-1}\left[t_n^{-\frac13}\h_{t_n}(0) + \sup_{t \in [t_n, t_{n+1}]} t_n^{-\frac13}|\h_t (0) - \h_{t_n} (0)|\right].
	\end{align*}
	By the union bound,
	\begin{align}
		\Pr\left(\mathsf{A}_n^{\gamma}\right) 
		\label{e.t.ltlilg3}
		&\leq  \P\Big(t_n^{-\frac13}\h_{t_n} (0) \geq (1-\e) u_n \Big) + \P\Big(\sup_{t \in [t_n, t_{n+1}]} t_n^{\frac13}|\h_t (0) - \h_{t_n} (0)| \geq \e u_n \Big).
	\end{align}
	For the first term, by \eqref{eq:tbdg0} and \eqref{eq:tbdg}, we know that for all large enough $n$ we have
	\begin{equation*}
		\P\Big(t_n^{-\frac13}\h_{t_n} (0) \geq (1-\e) u_n \Big) \leq   \exp\big(-(w -\e) (1-\e)^{\frac{3}{2}} u_n^{\frac{3}{2}} \big). 
	\end{equation*}
	As a function in time,
	$\h_t(0)$ is same in distribution as $t_n^{1/3}\h_{t\cdot t_n^{-1}}(0;\til\h_0^{(v)})$ with $v=t_n^{\frac13}$. Here $\h_s(0;\til\h_0^{(v)})$ is the KPZ fixed point started with the initial data $\til\h_0^{(v)}(x)=v^{-1}\h_0(v^2x)$. The choice of $\rho$ and Proposition \ref{pp:weakc} allow us to conclude
	\begin{align*}
		\P\Big(\sup_{t \in [t_n, t_{n+1}]} t_n^{-\frac13}|\h_t (0) - \h_{t_n} (0)| \geq \e u_n \Big) & \leq \mathbb{P}\Big(\sup_{t \in [1, \til{\rho}]} |\h_t (0; \til\h_0^{(v)}) - \h_{1} (0; \til\h_0^{(v)})| \geq \e u_n\Big) \\ & \leq \Con\exp(-\tfrac{1}\Con (\til{\rho}-1)^{-\frac13} \e^{3/2}u_n^{\frac{3}{2}} )= \Con\exp(-\tfrac{4}3u_n^{\frac{3}{2}} ),
	\end{align*}
	where the last equality follows from our choice of $\til{\rho}$. The estimates in the above two math displays are summable in $n$ by the definition of $u_n$ from \eqref{def:rn} and choice of $\gamma$ from \eqref{eq:gamma}. In view of the union bound in \eqref{e.t.ltlilg3}, this proves \eqref{eq:bcsum1}.	
	
	\subsubsection{Lower Bound}\label{sec:lblil} For each $n\in \Z_{>0}$ set $\mathcal{I}_n := [\exp(e^n), \exp(e^{n+1})]$. Take $\e>0$ and consider $\gamma=(1/w-\e)^{2/3}$ where $w$ is defined in \eqref{def:w}. Note that by Borel-Cantelli Lemma it is enough to show
	\begin{align}\label{eq:bcbm}
		\sum_{n = 1}^\infty \mathbb{P}\bigg(\frac{1}{n^{\frac{2}{3}}}\sup_{t \in \mathcal{I}_n} \bigg(\frac{\h_t(0)}{ t^{\frac{1}{3}}}
		\bigg) \leq \gamma\bigg) < \infty.
	\end{align}
	Set $\theta=\frac{\e}{12}>0$. Fix any $n\in \Z_{>0}$ large enough. Let $r_i = e^n + i e^{\theta n}$ for $i\ge 1$ and set 
	\begin{align}\label{eq:delK}
		\nt{i} := e^{r_i} - e^{r_{i-1}}, \quad \mbox{ for } i\in K_n(\theta):=[0,(e-1) e^{(1-\theta)  n}]\cap \Z.
	\end{align}
	Here $e^{r_{-1}}\equiv 0$. Recall $H^{t_2\downarrow t_1}$ defined in \eqref{long}. By union bound we have \begin{align}
		\nonumber	& \mathbb{P}\bigg(\frac{1}{n^{\frac{2}{3}}}\sup_{t \in \mathcal{I}_n} \bigg(\frac{\h_t(0)}{ t^{\frac{1}{3}}}
		\bigg) \leq \gamma\bigg)  \\ \nonumber & \le \P\left(\sup_{i\in K_n(\theta)} \frac{1}{n^{\frac{2}{3}}}\Big(\frac{\h_{e^{r_i}}(0)}{e^{r_i/3}}\Big) \leq \gamma\right)	\\ \nonumber &\leq \mathbb{P}\Big(\sup_{i\in K_n(\theta)} \frac{H^{e^{r_i}\downarrow e^{r_{i-1}}}}{e^{r_i/3}} \leq \gamma n^{2/3}+1 \Big) +\sum_{i\in K_n(\theta)} \Pr\left(\left|\frac{\h_{e^{r_i}}(0)-H^{e^{r_i}\downarrow e^{r_{i-1}}}}{e^{r_i/3}}\right|\ge 1\right)  \\
		\label{eq:bcbound1}
		&\leq \mathbb{P}\Big(\sup_{i\in K_n(\theta)} Y_i \leq \gamma n^{\frac{2}{3}} + 2\Big) + \Con \exp\Big(-\tfrac1{\Con}e^{\tfrac{1}{3} e^{n\theta}}\Big) e^{n(1-\theta)}.
	\end{align}
	The last estimate above follows from Theorem \ref{ind} taking $x\mapsto 1, a\mapsto e^{r_{i-1}}, t \mapsto e^{r_i-r_{i-1}}-1$. Clearly the last term is summable in $n$. Here
	\begin{align*}
		Y_i  & = (\nt{i})^{-\frac{1}{3}}\sup_z \Big(\h_0(z) + \mathcal{L}(z, e^{t_{i-1}}; 0, e^{t_i})\Big)  = \sup_z\bigg(\frac{\h_0((\nt{i})^{\frac{2}{3}} z)}{(\nt{i})^{\frac{1}{3}}} + \mathcal{A}_i(z) - z^2\bigg) 
	\end{align*}
	where $\{\mathcal{A}_i(\cdot)\}_{i\in K_n(\theta)}$ are independent stationary $\text{Airy}_2$ processes. To estimate the first term in r.h.s.~of \eqref{eq:bcbound1}, we now consider two cases.
	
	\medskip
	
	\noindent\textbf{Case 1. $\h_0$ is non-random} Here $Y_i$'s are independent. Then for all large enough $n$ we may bound the first term in r.h.s~of \eqref{eq:bcbound1} as follows.
	\begin{align}\notag
		\mathbb{P}\Big(\sup_{i\in K_n(\theta)} Y_i \leq \gamma n^{\frac{2}{3}} + 2\Big)  & \le \prod_{i\in K_n(\theta)}\Pr\left(Y_i  \le \gamma n^{2/3}+2\right) \\ \notag & \le  \prod_{i\in K_n(\theta)}\Pr\left(\frac{\h_0(0)}{(\nt{i})^{\frac13}}+\mathcal{A}_i(0) \le \gamma n^{2/3}+2\right) \\ & \le \prod_{i\in K_n(\theta)}\Pr\left(\tw  \le \gamma n^{2/3}+3\right), \label{eq:line1}
	\end{align}
	where in the last line we use the fact that $|\h_0(0)| \le \sigma \le (\nt{i})^{\frac13}$ for all large enough $n$. Using precise upper tail bounds for Tracy-Widom distribution \cite{ramirez2011beta} we see that
	\begin{align*}
		\mbox{r.h.s.~of \eqref{eq:line1}} \le (1-\Con \exp(-(\tfrac43+\e)\gamma^{3/2}n))^{(e-1)e^{n(1-\theta)}} \le \exp(-\Con(e-1)e^{n(1-\theta)-n(1-\tfrac{7\e}{12})})
	\end{align*}
	where the last line follows by recalling that $\gamma^{3/2}=(3/4-\e)$ in this case. As $\theta=\frac{\e}{12}$ we see that the above estimate is summable $n$ which in turns shows \eqref{eq:bcbm} for this case.

	\medskip

	\noindent\textbf{Case 2. $\h_0$ is Brownian}
	For $\h_0(z)=\bm(z)$, $Y_i$'s are no longer independent. However we may set $\zeta = \exp(\frac{1}{3} e^{n\theta})$, and define 
	\begin{align}
		\label{eq:brsc}
		Y'_{i} := \sup_{z \in [\zeta^{-1}, \zeta]} \bigg(\frac{\bm((\nt{i})^{\frac{2}{3}} z)}{(\nt{i})^{\frac{1}{3}}} + \mathcal{A}_i(z)-z^2\bigg)
	\end{align}
	and
	\begin{align*}
		Y_i'' := \sup_{z \in [\zeta^{-1}, \zeta]} \bigg(\frac{\bm((\nt{i})^{\frac{2}{3}} z) - \bm((\nt{i})^{\frac{2}{3}} \zeta^{-1})}{(\nt{i})^{\frac{1}{3}}} + \mathcal{A}_i(z) -z^2\bigg).
	\end{align*}
	Note that $Y_i'\le Y_i$ and $Y_i''= Y_i' - \frac{\bm((\nt{i})^{\frac{2}{3}} \zeta^{-1})}{(\nt{i})^{\frac{1}{3}}}$.
	Note that the intervals $[\zeta^{-1} (\nt{i})^{\frac{2}{3}}, \zeta (\nt{i})^{\frac{2}{3}}]$ and  $[\zeta^{-1} (\nt{i+1})^{\frac{2}{3}}, \zeta (\nt{i+1})^{\frac{2}{3}}]$ do not overlap. This ensures $\{Y_i''\}_{i \in K_n(\theta)}$ are independent. Also notice that 
	
	Consequently by union bound, 
	\begin{align*}
		\mathbb{P}\bigg(\sup_{i \in K_n(\theta)} Y_i \leq  n^\frac{2}{3} \gamma + 1 \bigg) &\leq \mathbb{P}\bigg(\sup_{i \in K_n(\theta) } Y'_i \leq n^{\frac{2}{3}} \gamma + 1\bigg)
		\\
		&\leq \mathbb{P}\bigg(\sup_{i \in K_n(\theta)} Y_i'' \leq n^{\frac{2}{3}} \gamma + 2\bigg) + \sum_{i \in K_n(\theta)} \mathbb{P}\Big(|Y_i' - Y_i''| \geq 1\Big) \\ & \leq \prod_{i\in K_n(\theta)}\mathbb{P}\bigg(Y_i'' \leq n^{\frac{2}{3}} \gamma + 2\bigg)+\Con e^{n(1-\theta)} \exp(-\tfrac1{\Con}\zeta),
	\end{align*}
	where the last line follows from independence of $Y_i''$'s and the fact that 
	\begin{equation*}
		\mathbb{P}\Big(|Y_i' - Y_i''| \geq 1\Big) = \mathbb{P}\bigg(\Big|\frac{\bm(\nt{i})^{\frac{2}{3}} \zeta^{-1})}{(\nt{i})^{\frac{1}{3}}}\Big| \geq 1\bigg) \leq \Con \exp(-\tfrac1{\Con}\zeta).
	\end{equation*}
	By union bound and Lemma \ref{lem:onepttail} \ref{2.6d} we see that for $n$ large enough
	\begin{align*}
		\mathbb{P}\Big(Y_i'' \leq n^{\frac{2}{3}}\gamma + 2\Big) &\leq \mathbb{P}\Big(\sup_{z \in [\zeta^{-1}, \zeta]}\Big(\frac{\bm((\nt{i})^{\frac{2}{3}} z)}{(\nt{i})^{\frac{1}{3}}} + \mathcal{A}_i (z) - z^2\Big) \leq n^{\frac{2}{3}}\gamma +3\Big) 
		+ \mathbb{P}\bigg(\frac{|\bm((\nt{i})^{\frac{2}{3}} \zeta^{-1})|}{(\nt{i})^{\frac{1}{3}}} \geq 1\bigg)\\
		&\leq 1 - \exp\big(-(\tfrac{2+\e}{3}) n \gamma^{\frac{3}{2}}\big) + \Con\exp(-\tfrac1{\Con}\zeta) \\ & \le 1 - e^{-(1-\frac{\e}{6}) n} + \Con\exp(-\tfrac1{\Con}\zeta)
	\end{align*}
	where in the last line we use the fact that $\gamma^{3/2}=(3/2-\e)$ in this case. Consequently, we have 
	\begin{align*}
		\prod_{i \in K_n(\theta)} \mathbb{P}\Big(Y_i'' \leq n^{\frac{2}{3}} \gamma + 2\Big)
		&\leq \Big(1 - \tfrac{1}{\Con}e^{-(1-\frac{\e}{6}) n} + \Con\exp(-\tfrac1\Con e^{\frac{1}{3} e^{n\theta}})\Big)^{(e-1)e^{n(1-\theta)}}\\
		&\leq \exp\Big(-(e-1)e^{n(1-\theta)} \Big(e^{-(1-\frac{\e}6) n} - \Con\exp(-\tfrac1\Con e^{\frac{1}{3} e^{n\theta}})
		\Big)\Big)
	\end{align*}
	which is summable in $n$ for $\theta=\frac{\e}{12}$. This concludes the proof of \eqref{eq:bcbm}.

	\subsection{Short time LIL: Proof of Theorem \ref{thm:main}} \label{sec6.2}
	
	We now turn towards the proof of the short time LIL: Theorem \ref{thm:main}. As with the proof of Theorem \ref{thm:longLILa} and \ref{thm:longLILb}, it is enough to show
	\begin{equation}\label{eq:sbdd}
		\limsup_{t \downarrow 1} \frac{\h_t (0) - \h_{1} (0)}{(t-1)^{\frac{1}{3}} (\log \log\frac1{t-1})^{\frac{2}{3}}} \leq (\tfrac{3}{2})^{\frac{2}{3}}, \qquad \limsup_{t \downarrow 1} \frac{\h_t (0) - \h_{1} (0)}{(t-1)^{\frac{1}{3}} (\log \log\frac1{t-1})^{\frac{2}{3}}} \geq (\tfrac{3}{2})^{\frac{2}{3}}.
	\end{equation}
	We now proceed to show the above lower bound and upper bound in Section \ref{sec:subld} and Section \ref{sec:slwld} respectively.
	
	\subsubsection{Upper Bound}\label{sec:subld} Fix  $\delta>0$. For simplicity let us write $\h_t:=\h_t(0)$. Fix $\rho\in (0,1)$ to be chosen appropriately in terms of $\delta$ later. Consider $s_n = 1 + \rho^n$. Note that for all large enough $n$ 
	\begin{align}
		\notag
		\sup_{t \in [s_{n+1}, s_n]}\frac{|\h_t - \h_{1}|}{(t-1)^{\frac{1}{3}}} &\leq \frac{|\h_{s_n} - \h_{1}|}{(s_{n+1} - 1)^{\frac{1}{3}}} + \sup_{t \in [s_{n+1}, s_n]} \frac{|\h_t- \h_{s_n} |}{(s_{n+1}-1)^{\frac{1}{3}}}\\
		&= \rho^{-\frac{1}{3}} \frac{|\h_{s_n} - \h_{1}|}{(s_{n} - 1)^{\frac{1}{3}}} + (\rho^{-1} - 1)^{\frac{1}{3}} \sup_{t \in [s_{n+1}, s_n]} \frac{|\h_t - \h_{s_n}|}{(s_{n}-s_{n+1})^{\frac{1}{3}}}. \notag
	\end{align}
	By union bound we have 
	\begin{align*}
		& \P\left(\sup_{t \in [s_{n+1}, s_n]}\frac{|\h_t- \h_{1}|}{(t-1)^{\frac{1}{3}} (\log \log \frac{1}{t-1})^{\frac{2}{3}}} \geq (\tfrac{3}{2})^{\frac{2}{3}} + \delta\right) \\ & \leq \P\left(\sup_{t \in [s_{n+1}, s_n]}\frac{|\h_t - \h_{1}|}{(t-1)^{\frac{1}{3}}} \geq \big((\tfrac{3}{2})^{\frac{2}{3}} + \delta) \log^{\frac23} n\right) \\
		&\leq \P\left(\rho^{-\frac{1}{3}} \frac{|\h_{s_n} - \h_{1}|}{(s_{n} - 1)^{\frac{1}{3}}} \geq \big((\tfrac{3}{2})^{\frac{2}{3}} + \tfrac{\delta}{2}\big) \log^{\frac23} n\right)\\
		& \hspace{2cm}  + \P\left((\rho^{-1} - 1)^{\frac{1}{3}} \sup_{t \in [s_{n+1}, s_n]} \frac{|\h_t - \h_{s_n} |}{(s_{n}-s_{n+1})^{\frac{1}{3}}} \geq \tfrac{\delta}{2} \log^{\frac23} n\right).
	\end{align*}
	We apply Proposition \ref{prop:stincrementtail} with $\e=\frac{\delta}{100}$ and get
	\begin{equation}\label{eq:temp11}
		\P\left(\rho^{-\frac{1}{3}} \frac{|\h_{s_n} - \h_{1}|}{(s_{n} - 1)^{\frac{1}{3}}} \geq ((\tfrac{3}{2})^{\frac{2}{3}} + \tfrac{\delta}{2})\log^{\frac23} n\right) \leq \Con \exp\Big(-\rho^{1/2}(\tfrac23-\tfrac{\delta}{100})((\tfrac{3}{2})^{\frac{2}{3}} + \tfrac{\delta}{2})^{3/2}\log n\Big).
	\end{equation}
	On the other hand, by Proposition \ref{pp:weakc} we have
	\begin{equation}\label{eq:temp12}
		\P\left((\rho^{-1} - 1)^{\frac{1}{3}} \sup_{t \in [s_{n+1}, s_n]} \frac{|\h_t- \h_{s_n}|}{(s_{n}-s_{n+1})^{\frac{1}{3}}} \geq \tfrac{\delta}{2} \log^{\frac23} n\right) \leq \Con\exp\left(-\tfrac1\Con \frac{{\delta}^{3/2}}{(\rho^{-1}-1)^{1/2}2^{3/2}}\log n\right).
	\end{equation}
	Now one can choose $\rho=\rho(\delta)$ close to $1$ but less than $1$ so that the coefficient of $\log n$ on r.h.s.~of \eqref{eq:temp11} and \eqref{eq:temp12} is strictly larger than $1$. This forces both the estimates on \eqref{eq:temp11} and \eqref{eq:temp12} summable in $n$. Thus for this choice of $\rho$ we have
	\begin{align*}
		\sum_{n=1}^{\infty} \P\Big(\sup_{t \in [s_{n+1}, s_n]}\frac{|\h_t- \h_{1}|}{(t-1)^{\frac{1}{3}} (\log \log \frac1{t-1})^{\frac{2}{3}}} \geq (\tfrac{3}{2})^{\frac{2}{3}} + \delta\Big) < \infty.
	\end{align*}
	By Borell Cantelli Lemma  with probability $1$
	\begin{equation*}
		\limsup_{t\downarrow 1}\frac{|\h_t-\h_{1}|}{(t-1)^{\frac{1}{3}} (\log \log (t-1)^{-1})^{\frac{2}{3}}} \leq (\tfrac{3}{2})^{\frac{2}{3}} + \delta.
	\end{equation*}
	Letting $\delta \downarrow 0$ concludes the upper bound in \eqref{eq:sbdd}.
	
	\subsubsection{Lower Bound}\label{sec:slwld} For each $n\in \Z_{>0}$ set $\mathcal{I}_n := [\exp(e^n), \exp(e^{n+1})]$. Take $\e>0$ and consider $\gamma=\left(\frac32-\e\right)^{\frac23}$. Note that by Borel-Cantelli Lemma it is enough to show
	\begin{align}\label{eq:slb}
		\sum_{n=1}^{\infty} \Pr\left(\frac1{n^{\frac23}}\sup_{t\in\mathcal{I}_n}\frac{\h_{1+t^{-1}}(0)-\h_1(0)}{t^{-\frac13}} \le \gamma \right) <\infty.
	\end{align}
	Set $\theta=\frac{\e}{12}>0$. Fix any $n\in \Z_{>0}$ large enough. Let $r_i = e^n +i e^{\theta n}$ for $i\ge 1$ and set \begin{align*}
		\ft{i} := e^{-r_{i-1}} - e^{-r_{i}}, \quad \mbox{ for } i\in K_n(\theta):=[0,(e-1) e^{(1-\theta)  n}]\cap \Z.
	\end{align*}
	Here $e^{-r_{-1}}\equiv 0$. Recall $H_1^{t_2\downarrow t_1}$ defined in \eqref{def:dwnar}. Observe that
	\begin{align} \nonumber
		\Pr\left(\frac1{n^{\frac23}}\sup_{t\in\mathcal{I}_n}\frac{\h_{1+t^{-1}}(0)-\h_1(0)}{t^{-\frac13}} \le \gamma \right)  & \le \Pr\left(\sup_{i\in K_n(\theta)}\frac{\h_{1+e^{-r_i}}(0)-\h_1(0)}{e^{-r_i/3}} \le \gamma n^{2/3} \right) \\ \label{t1} & \le \Pr\left(\sup_{i\in K_n(\theta)}\frac{H_{1}^{e^{-r_i}\downarrow e^{-r_{i+1}}}	-\h_1(0)}{e^{-r_i/3}} \le \gamma n^{2/3}+1 \right)\\ & \hspace{1cm}+\sum_{i\in K_n(\theta)}\Pr\left(\left|\frac{\h_{1+e^{-r_i}}(0)-H_{1}^{e^{-r_i}\downarrow e^{-r_{i+1}}}}{e^{-r_i/3}}\right|\ge 1\right). \label{t2}
	\end{align}
	Temporarily set $t:=e^{r_{i+1}-r_i}-1=e^{e^{\theta n}}-1$, $a^{-1}:=e^{-r_i}-e^{-r_{i+1}} \le e^{-r_i}$ which implies $a \ge e^{r_i} \ge e^{e^n}$. Clearly for large enough $n$, $(a,t)$ is a permissible pair. Thus for the term in \eqref{t2}, by Theorem \ref{thm:lr} we get that
	\begin{align*}
		\sum_{i\in K_n(\theta)}\Pr\left(\left|\frac{\h_{1+e^{-r_i}}(0)-H_{1}^{e^{-r_i}\downarrow e^{-r_{i+1}}}	}{e^{-r_i/3}}\right|\ge 1\right) & \le  \Con e^{(1-\theta)n}e^{\frac23e^{n+1}}\exp\left(-\tfrac1\Con e^{\frac1{16}e^{\theta n}}\right) \\ & = \Con \exp\left((1-\theta)n+\tfrac23e^{n+1}-\tfrac1{\Con}e^{\frac1{16}e^{\theta n}}\right)
	\end{align*}
	which is summable over $n$. For the term in \eqref{t1} note that
	\begin{align*}
		\frac{H_{1}^{e^{-r_i}\downarrow e^{-r_{i+1}}}	-\h_1(0)}{e^{-r_i/3}}  =  (1-\exp(-e^{\theta n}))^{\frac13} \cdot\sup_{y\in \R} \left[\frac{\h_1((\ft{i})^{2/3}y)-\h_1(0)}{\ft{i}^{1/3}}+\calA_i(y)-y^2\right]  
	\end{align*}
	where $\calA_i$ are independent stationary $\operatorname{Airy}_2$ processes independent of $\h_1(\cdot)$. Set $\zeta=\exp(\frac13e^{\theta n})$. Observe that $\ft{i}^{2/3}\zeta= \ft{{i-1}}^{2/3}\zeta^{-1}$. Thus $\{(\ft{i}^{2/3}\zeta^{-1},\ft{i}^{2/3}\zeta)\}_i$ are disjoint intervals. Set $a_n:=\ft{0}^{2/3}\zeta \le \exp(-\frac23r_0+\frac13e^{\theta n})$ and $\mu_n:=\exp(\frac16r_0)\zeta^{-1}=\exp(\frac16r_0-\frac13e^{\theta n})$. Define
	\begin{align*}
		Y_i & := \sup_{y\in[\zeta^{-1},\zeta]} \left[\frac{\h_1((\ft{i})^{\frac23}y)-\h_1(0)}{\ft{i}^{1/3}}+\calA_i(y)-y^2\right], \\ 
		Y_i^{\operatorname{drift}} &  := \sup_{y\in[\zeta^{-1},\zeta]} \left[\frac{\h_1^{-\mu_n}((\ft{i})^{\frac23}y)-\h_1^{-\mu_n}(0)}{\ft{i}^{1/3}}+\calA_i(y)-y^2\right].
	\end{align*}
	Recall the event $\EE_1^{\mu_n}(a_n)$ from \eqref{eq:etma}.	Observe that by Lemma \ref{lem:argmaxprop} \ref{lem:compare} we have
	\begin{align*}
		\eqref{t1}  \le \Pr\left(\sup_{i\in K_n(\theta)}Y_i  \le \gamma n^{2/3}+2\right) & \le \Pr\left(\sup_{i\in K_n(\theta)}Y_i  \le \gamma n^{2/3}+2, \EE_1^{\mu_n}(a_n)\right)+\Pr\left(\EE_1^{\mu_n}(a_n)^c\right) \\ & \le \Pr\left(\sup_{i\in K_n(\theta)}Y_i^{\operatorname{drift}}  \le \gamma n^{2/3}+2\right)+\Pr\left(\EE_1^{\mu_n}(a_n)^c\right).
	\end{align*}
	We claim that 
	\begin{align}
		\label{eq:claim4}
		\Pr\left(\m{E}_1^{a_n}(\mu_n)^c\right) \le \Con\exp\left(-\tfrac1\Con \mu_n^{3/2}\right)=\Con\exp\left(-\tfrac1{\Con}\exp(\tfrac14r_0-\tfrac12e^{\theta n})\right),
	\end{align}
	which is summable over $n$. We will prove \eqref{eq:claim4} in a moment. Let us first finish the proof of the lower bound assuming it.

	Note that as a process in $x$, by Lemma \ref{lem:2.3} \ref{2.3c} we have $\h_1^{-\mu_n}(x)-\h_1^{-\mu_n}(0)\stackrel{d}{=}\B(x)-\mu_n x$. As
	\begin{align*}
		|\mu_n(\nt{i})^{1/3}\zeta| = (e^{\frac12r_0-r_{i-1}}-e^{\frac12r_0-r_{i}})^{1/3} \le e^{-\frac12r_0} \le 1.
	\end{align*}
	Thus $Y_i^{\operatorname{drift}}\ge Y_i'-1$ where 
	\begin{align*}
		Y'_{i} := \sup_{z \in [\zeta^{-1}, \zeta]} \bigg(\frac{\bm((\ft{i})^{\frac{2}{3}} z)}{(\ft{i})^{\frac{1}{3}}} + \mathcal{A}_i(z)-z^2\bigg).
	\end{align*}
	Repeating the arguments of \textbf{Case 2} of proof of long-time LIL we conclude \begin{align}\label{br-tail}
		\sum_{n=1}^{\infty}\Pr\left(\sup_{i\in K_n(\theta)}Y_i'  \le \gamma n^{2/3}+3\right) <\infty.
	\end{align}
	Thus the term in \eqref{t1} is summable in $n$. This establishes \eqref{eq:slb} concluding the proof modulo \eqref{eq:claim4}. Let us now justify \eqref{eq:claim4}. Observe that for any $a \in [-1,1]$ and $\mu >0$ by the same argument as in \eqref{e.l.maxbound4} we have 
	\begin{equation*}
		\begin{aligned}
			\P\big(\EE_1^{\mu}
			(a)^c\big) & \leq \mathbb{P}\Big(Z^{0}_1(0)  \leq a-\tfrac14\mu\Big) +  \mathbb{P}\Big(Z_1 (a; \h_0)\geq \tfrac{1}{4}\mu\Big) \\
			& \hspace{2cm}+\mathbb{P}\Big(Z^{0}_1(0) \geq \tfrac14\mu-a\Big) +  \mathbb{P}\Big(Z_1 (-a; \h_0)\leq -\tfrac{1}{4}\mu\Big).
		\end{aligned}
	\end{equation*}
	Applying Corollary \ref{lem:exitbound} we see that the r.h.s.~of above equation is at most $\Con\exp(-\frac1\Con \mu^3)$ where the $\Con>0$ depends only on $\stp$ and is free of $a$ and $\mu$. As $a_n\in [-1,1]$ for all large enough $n$, this proves \eqref{eq:claim4}.

	\appendix
	\section{Technical Lemmas}\label{app}
	
	In this section we prove two technical lemmas that were used in the main sections. The first is Lemma \ref{b_bb} which gives a technical estimate related to Brownian motion. We recall the statement of the result for reader's convenience. 
	
	\begin{lemma}[Lemma \ref{b_bb}]   There exist universal constants $\Con>0$ such that for all $r,s>0$ we have
		\begin{align}\label{eq:b_bb1}
			\Pr\Big(\sup_{y\in \R, |z|\le r} \left[ \B(y)-\B(z)-\tfrac{(y-z)^2}{4}\right] \ge s\Big) \le (r+1)\cdot \se{3/2}
		\end{align}
		where $\B(x)$ is a two-sided Brownian motion with diffusion coefficient $2$.
	\end{lemma}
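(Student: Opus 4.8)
The plan is to discretize both the unrestricted variable $y\in\R$ and the restricted variable $z\in[-r,r]$ into unit integer intervals and use the parabolic term $-\tfrac14(y-z)^2$ to pay for the growth of the Gaussian variances. For $k\in\Z$ and $\ell\in\Z$ with $[\ell,\ell+1]\cap[-r,r]\neq\emptyset$ (there are $O(r+1)$ such $\ell$), set $m_{k,\ell}:=\operatorname{dist}\big([k,k+1],[\ell,\ell+1]\big)=\max(0,|k-\ell|-1)$ and $T_{k,\ell}:=s+\tfrac14 m_{k,\ell}^2$, and let $\mathsf{A}_{k,\ell}$ be the event that $\sup_{y\in[k,k+1],\,z\in[\ell,\ell+1]}\big[\B(y)-\B(z)-\tfrac14(y-z)^2\big]\ge s$. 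On $\mathsf{A}_{k,\ell}$ one has $\B(y)-\B(z)\ge s+\tfrac14(y-z)^2\ge T_{k,\ell}$ for the corresponding $y,z$. Writing $\B(y)-\B(z)=(\B(y)-\B(k))+(\B(k)-\B(\ell))-(\B(z)-\B(\ell))$ and distributing the threshold $T_{k,\ell}$ equally among the three terms, the reflection principle (for the two local Brownian increments over unit intervals, whose suprema have Gaussian tails of scale $O(1)$) together with the Gaussian tail of $\B(k)-\B(\ell)\sim N(0,2|k-\ell|)$ yields
\begin{equation*}
\Pr(\mathsf{A}_{k,\ell})\le \Con\exp\big(-\tfrac1\Con T_{k,\ell}^2\big)+\Con\exp\big(-\tfrac{1}{\Con|k-\ell|}T_{k,\ell}^2\big),
\end{equation*}
where the second term is absent when $k=\ell$. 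Then a union bound over all $k\in\Z$ and the $O(r+1)$ values of $\ell$ reduces everything to summing this estimate.

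To carry out the summation, fix $\ell$ and put $m:=|k-\ell|$. Since $(a+b)^2\ge a^2+b^2$ for $a,b\ge 0$, we have $T_{k,\ell}^2\ge s^2+\tfrac1{16}\max(0,m-1)^4$, so the first term sums over $k$ to at most $\Con e^{-s^2/\Con}\sum_{m\ge0}e^{-\max(0,m-1)^4/\Con}\le \Con' e^{-s^2/\Con'}$. For the second term one splits at $m\asymp\sqrt s$: when $m\lesssim\sqrt s$ use $T_{k,\ell}\ge s$ and $|k-\ell|\lesssim\sqrt s$ to get exponent $\gtrsim s^{3/2}$, and there are only $O(\sqrt s)$ such $k$, so this part contributes $\Con\sqrt s\,e^{-s^{3/2}/\Con}\le\Con' e^{-s^{3/2}/\Con'}$; when $m\gtrsim\sqrt s$ use $T_{k,\ell}\ge\tfrac14(m-1)^2$ to get exponent $\gtrsim m^3\ge\tfrac12(m^3+s^{3/2})$, so this part contributes $\Con e^{-s^{3/2}/\Con}\sum_{m}e^{-m^3/\Con}\le\Con' e^{-s^{3/2}/\Con'}$. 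Hence $\sum_{k\in\Z}\Pr(\mathsf{A}_{k,\ell})\le\Con e^{-s^{3/2}/\Con}$ for $s$ large; summing over the $O(r+1)$ relevant $\ell$ gives the asserted bound $(r+1)\Con\exp(-\tfrac1\Con s^{3/2})$ after enlarging $\Con$, and for small $s$ the bound is trivial (the left side is $\le1$) after enlarging $\Con$ once more.

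The only delicate point is the bookkeeping of the double sum rather than any individual estimate. The key observation driving the exponent is that the Gaussian contribution $|\B(k)-\B(\ell)|$ has variance $\asymp|k-\ell|$ while the parabolic gain $\tfrac14 m_{k,\ell}^2$ is $\asymp|k-\ell|^2$; balancing a threshold of order $s$ against variance of order $|k-\ell|$ forces the crossover at $|k-\ell|\asymp\sqrt s$, which is exactly what produces the $s^{3/2}$ rate, and beyond that crossover the parabolic gain makes the exponent cubic in $|k-\ell|$, guaranteeing convergence of the sum over $k$. The restriction $|z|\le r$ is what keeps the $\ell$-sum finite and linear in $r$, accounting for the $(r+1)$ prefactor. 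Everything else is routine Gaussian tail estimation and elementary series bounds.
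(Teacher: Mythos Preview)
Your proof is correct and follows essentially the same approach as the paper: both discretize $y$ and $z$ into unit integer blocks, split $\B(y)-\B(z)$ into a local fluctuation part and a long-range increment $\B(k)-\B(\ell)$, and then sum using the parabolic gain to control the Gaussian variances. The only cosmetic differences are that the paper bundles the two local increments into a single event handled via a $2$D modulus-of-continuity lemma (Lemma~3.3 of \cite{dv21}) rather than two separate reflection-principle bounds, and it estimates the $k$-sum by an integral instead of splitting at $m\asymp\sqrt s$.
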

	
	\begin{proof} Fix $k_1,k_2 \in \Z$. Define
		\begin{align*}
			m_{k_1,k_2}:=\inf\limits_{y\in [k_1,k_1+1],z\in [k_2,k_2+1]} |y-z|, \quad \mathsf{A}_{k_1,k_2} & := \Big\{\sup_{y\in [k_1,k_1+1], z\in [k_2,k_2+1]} \big[ \B(y)-\B(z)-\tfrac{(y-z)^2}{4}\big] \ge s\Big\}.
		\end{align*}
		Let us set $ 2\sup_{x\in [0,1]}\sqrt{x\log\frac{2}{x}}=:\tau<\infty$ We consider the following events
		\begin{align*}
			\mathsf{B}_{k_1,k_2} & := \Big\{\sup_{y\in [k_1,k_1+1],z\in [k_2,k_2+1]} \big|\B(y)-\B(z)-\B({k_1})+\B({k_2})\big|\ge \tfrac{s}{2}+\tfrac{m_{k_1,k_2}^2}{8}\Big\},\\
			\mathsf{C}_{k_1,k_2} & := \Big\{|\B({k_1})-\B({k_2})|\ge \tfrac{s}{2}+\tfrac{m_{k_1,k_2}^2}{8} \Big\}.
		\end{align*}
		Define the 2D random process $\mathfrak{F}(y,z):=\B(y)-\B(z)$. Note that by tail estimates of Brownian motion we have
		\begin{align*}
			\Pr(|\mathfrak{F}(y,z+h)-\mathfrak{F}(y,z)|\ge u\sqrt{h})\le \Con e^{-\frac{u^2}{\Con}}, \quad 	\Pr(|\mathfrak{F}(y+h,z)-\mathfrak{F}(y,z)|\ge u\sqrt{h}) \le \Con e^{-\frac{u^2}{\Con}}.
		\end{align*}
		Thus by Lemma 3.3 in \cite{dv21}, we get that
		\begin{align*}
			\Pr\Big(\sup_{y\in [k_1,k_1+1],z\in [k_2,k_2+1]} \Big|\frac{\mathfrak{F}(y,z)-\mathfrak{F}(k_1,k_2)}{\sqrt{(z-k_2)\log\frac2{z-k_2}}+\sqrt{(y-k_1)\log\frac2{y-k_1}}}\Big|\ge u\Big) \le \Con e^{-\frac{u^2}{\Con}}.
		\end{align*}
		We can set $u=\frac1{\tau}\big(\frac{s}2+\frac{m_{k_1,k_2}^2}{8}\big)$ and adjust the constants to get 
		\begin{align*}
			\Pr\big(\mathsf{B}_{k_1,k_2}\big)\le \Con\exp\Big(-\tfrac1\Con(4s+m_{k_1,k_2}^2)^2\Big).
		\end{align*}
		On the other hand when $k_1=k_2$, the event $\mathsf{C}_{k_1,k_2}$ has probability zero. For $k_1\neq k_2$ by stationary increment properties of Brownian motion we have
		\begin{align*}
			\Pr\Big(|\B({k_1})-\B({k_2})|\ge \tfrac{s}{2}+\tfrac{m_{k_1,k_2}^2}{8}\Big)\le \Con\exp\Big(-\tfrac1\Con\frac{(4s+m_{k_1,k_2}^2)^2}{|k_1-k_2|}\Big).
		\end{align*}
		Since $\mathsf{A}_{k_1,k_2} \subset \mathsf{B}_{k_1,k_2}\cup \mathsf{C}_{k_1,k_2}$, by union bound we get 
		\begin{align*}
			\Pr(\mathsf{A}_{k_1,k_2}) \le \Con\exp\big(-\tfrac1\Con(4s+m_{k_1,k_2}^2)^2\big)+\Con\exp\Big(-\frac1\Con\frac{(4s+m_{k_1,k_2}^2)^2}{|k_1-k_2|}\Big)\ind_{k_1\neq k_2}
		\end{align*}
		Fixing $k_2$, we first take the sum over $k_1\in \Z$. Since $m_{k_1,k_2} \ge \frac12|k_1-k_2|$ whenever $|k_1-k_2|\ge 2$, the first term sums to $c_1\exp(-c_2s^2)$. For the second term we approximate the sum as an integral. Indeed,
		\begin{align*}
			\sum_{k_1\in \Z} \Con\exp\Big(-\tfrac1\Con\frac{(4s+m_{k_1,k_2}^2)^2}{|k_1-k_2|}\Big)\ind_{k_1\neq k_2} \le \Con\int_{\R}\exp\left(-\tfrac1\Con\frac{(s+|x|^2)^2}{|x|}\right)\d x \le \Con\exp(-\tfrac1\Con s^{3/2}).
		\end{align*}
		The last inequality follows by noting that the exponent attains maximum of $-c_2s^{3/2}$ when $|x|=\sqrt{s}$. Thus by union bound again,
		\begin{align*}
			\mbox{l.h.s.~of \eqref{eq:b_bb1}} \le \sum_{k_2=-\lceil r\rceil}^{\lceil r\rceil}\sum_{k_1\in \Z} \P(\mathsf{A}_{k_1,k_2}) \le (r+1)\cdot \se{3/2}.
		\end{align*} 
		This completes the proof. 
	\end{proof}
	
	We next collect an elementary real analysis inequality that is used in defining permissible pairs in Definition \ref{pair}.
	
	\begin{lemma}[Real Analysis Inequality] \label{l:rai} For each $\alpha>1$, there exists a constant $\Delta_\alpha>0$ such that for all $\gamma\ge 2$ we have
		\begin{align*}
			\sup_{x\in [0,1]} \left[x^{\frac12}\log^\alpha \tfrac1x-\gamma x^2\right] \le \Delta_{\alpha} \gamma^{-\frac13}\log^{4\alpha/3} \gamma.
		\end{align*}
	\end{lemma}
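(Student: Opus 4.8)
The plan is to change variables to reduce to a clean one-dimensional optimization and then apply a weighted Young inequality. Substituting $t=x^{2}$ (so that $x^{1/2}=t^{1/4}$, $\log\tfrac1x=\tfrac12\log\tfrac1t$, and $x\mapsto t$ is a bijection of $(0,1]$), the quantity to be bounded equals $\sup_{t\in(0,1]}h(t)$, where $h(t):=2^{-\alpha}t^{1/4}\bigl(\log\tfrac1t\bigr)^{\alpha}-\gamma t$; the endpoint $x=0$ contributes $0$ and may be ignored since the right-hand side is positive. So it suffices to bound $h$.

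The first main step is Young's inequality with conjugate exponents $4$ and $4/3$: writing $2^{-\alpha}t^{1/4}\bigl(\log\tfrac1t\bigr)^{\alpha}=A\cdot B$ with $A:=(4\gamma)^{1/4}t^{1/4}$ and $B:=2^{-\alpha}(4\gamma)^{-1/4}\bigl(\log\tfrac1t\bigr)^{\alpha}$, one gets
\[
2^{-\alpha}t^{1/4}\bigl(\log\tfrac1t\bigr)^{\alpha}\ \le\ \tfrac14 A^{4}+\tfrac34 B^{4/3}\ =\ \gamma t+c_{\alpha}\,\gamma^{-1/3}\bigl(\log\tfrac1t\bigr)^{4\alpha/3},\qquad c_{\alpha}:=\tfrac{3}{2^{8/3}}\,2^{-4\alpha/3},
\]
so $h(t)\le c_{\alpha}\gamma^{-1/3}\bigl(\log\tfrac1t\bigr)^{4\alpha/3}$ for every $t$. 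This already has the right shape, except that $\log\tfrac1t$ appears where we want $\log\gamma$; the bound is sharp only near the critical scale $t\asymp\gamma^{-4/3}(\log\gamma)^{4\alpha/3}$ and is wasteful for very small $t$, so a regime split is needed.

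The second step handles the two ranges. For $t\ge\gamma^{-3/2}$ one has $\log\tfrac1t\le\tfrac32\log\gamma$, and the Young bound immediately yields $h(t)\le c_{\alpha}\bigl(\tfrac32\bigr)^{4\alpha/3}\gamma^{-1/3}(\log\gamma)^{4\alpha/3}$. For $t<\gamma^{-3/2}$ we drop the negative term, $h(t)\le 2^{-\alpha}t^{1/4}\bigl(\log\tfrac1t\bigr)^{\alpha}$, and use the elementary fact (a one-line logarithmic-derivative computation) that $t\mapsto t^{1/4}\bigl(\log\tfrac1t\bigr)^{\alpha}$ is increasing on $(0,e^{-4\alpha})$; provided $\gamma\ge e^{8\alpha/3}$ we have $\gamma^{-3/2}\le e^{-4\alpha}$, so the supremum over $(0,\gamma^{-3/2})$ is attained at the right endpoint and equals $\gamma^{-3/8}\bigl(\tfrac32\log\gamma\bigr)^{\alpha}$, giving $h(t)\le\bigl(\tfrac34\bigr)^{\alpha}\gamma^{-3/8}(\log\gamma)^{\alpha}\le\bigl(\tfrac34\bigr)^{\alpha}\gamma^{-1/3}(\log\gamma)^{4\alpha/3}$ (using $\gamma^{-3/8}\le\gamma^{-1/3}$ and $\log\gamma\ge1$). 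Combining the two cases, the asserted inequality holds for all $\gamma\ge\gamma_{0}(\alpha):=\max\bigl(e,e^{8\alpha/3}\bigr)$ with $\Delta_{\alpha}$ the larger of the two explicit constants above.

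Finally, for the leftover compact range $\gamma\in[2,\gamma_{0}(\alpha)]$ a crude bound suffices: $\sup_{x\in(0,1]}\bigl[x^{1/2}(\log\tfrac1x)^{\alpha}-\gamma x^{2}\bigr]\le M_{\alpha}:=\sup_{x\in[0,1]}x^{1/2}(\log\tfrac1x)^{\alpha}<\infty$, since this function is continuous on $[0,1]$ and vanishes at both endpoints, while $\gamma\mapsto\gamma^{-1/3}(\log\gamma)^{4\alpha/3}$ is continuous and strictly positive on $[2,\gamma_{0}(\alpha)]$, hence bounded below there by some $c'_{\alpha}>0$; enlarging $\Delta_{\alpha}$ to also exceed $M_{\alpha}/c'_{\alpha}$ completes the proof. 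The only part that requires any care is the bookkeeping in the regime split — in particular verifying $\gamma^{-3/2}\le e^{-4\alpha}$ for $\gamma\ge e^{8\alpha/3}$ so that the monotonicity argument applies — while everything else is routine.
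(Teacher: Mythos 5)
Your proof is correct, and it takes a genuinely different route from the paper. The paper substitutes $y = x\gamma^{2/3}$ so that the factor $\gamma^{-1/3}$ pops out immediately, then bounds the rescaled expression by splitting at $y=1$: on $[0,1]$ the quadratic is dropped and the logarithm bounded crudely by $\log\gamma$ up to a constant, while on $[1,\gamma^{2/3}]$ the key step is maximizing the concave function $y\mapsto r\sqrt{y}-y^2$ in closed form with $r = \log^{\alpha}\gamma^{2/3}$, producing the $r^{4/3}\asymp\log^{4\alpha/3}\gamma$ scaling. Your approach instead substitutes $t=x^2$ to get a linear term $-\gamma t$, then applies a weighted Young inequality with exponents $4,\,4/3$ calibrated so the dominant piece exactly absorbs $\gamma t$; the $\gamma^{-1/3}$ and the exponent $4\alpha/3$ then fall out automatically rather than from an explicit optimization. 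The residual issue that $\log\tfrac1t$ is not comparable to $\log\gamma$ for tiny $t$ is correctly handled by your regime split at $t=\gamma^{-3/2}$ together with the monotonicity of $t^{1/4}(\log\tfrac1t)^{\alpha}$ on $(0,e^{-4\alpha})$, and the leftover compact range $\gamma\in[2,e^{8\alpha/3}]$ is dispatched by a continuity argument. Both proofs are elementary and of similar length; yours packages the core estimate into Young's inequality, which makes the appearance of the exponents $-1/3$ and $4\alpha/3$ somewhat more transparent, while the paper's concavity computation is marginally more self-contained. Either is a perfectly acceptable proof of the lemma.
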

	\begin{proof} We use $\Con>0$ to denote a generic constant dependent only on $\alpha$ changing from line to line. Let $y=x\gamma^{\frac23}$ so that
		\begin{align*}
			\sup_{x\in [0,1]} \left[x^{\frac12}\log^\alpha \tfrac1x-\gamma x^2\right] =
			\sup_{y\in [0,\gamma^{\frac23}]} \gamma^{-\frac13}\left[y^{\frac12}\log^\alpha \tfrac{\gamma^{2/3}}{y}-y^2\right].
		\end{align*}
		Note that for $y\in [0,1]$, 
		\begin{align*}
			y^{\frac12}\log^\alpha \tfrac{\gamma^{2/3}}{y} \le 2^\alpha\left[y^{\frac12}\log^{\alpha} \gamma^{2/3}+y^{\frac12}\log^{\alpha}\tfrac1y\right] \le \Con \log^{\alpha}\gamma.
		\end{align*}
		For $y\in [1,\gamma^{2/3}]$ we have
		\begin{align*}
			y^{\frac12}\log^\alpha \tfrac{\gamma^{2/3}}{y}-y^2 \le y^{\frac12}\log^{\alpha} \gamma^{2/3} - y^2  \le \Con \log^{4\alpha/3} \gamma,
		\end{align*}
		where the last inequality follows from the fact that $\sqrt{y}r-y^2$ is concave in $y$ and attains maximum at $y=(r/4)^{2/3}$. Combining the last two displays we obtain the required inequality.
	\end{proof}

	\bibliographystyle{alpha}
	\bibliography{frt}
\end{document}